\documentclass[11pt]{article}
\usepackage{mathrsfs}
\usepackage{amssymb}
\usepackage[hang,flushmargin]{footmisc}
\usepackage{multirow}
\usepackage{amsfonts}
\usepackage{graphicx}
\usepackage{amsmath,amsthm,amssymb,latexsym,color}
\usepackage[colorlinks, linkcolor=blue, anchorcolor=gree, citecolor=red]{hyperref}
\usepackage[numbers,sort&compress]{natbib}
 \makeatletter
    
    \newcommand{\Rmnum}[1]
    {\expandafter\@slowromancap\romannumeral #1@}
    \makeatother
\textwidth=14cm \textheight=22.2cm \topmargin = 0cm \oddsidemargin=1cm
\evensidemargin = 0cm
\def\wz{\tilde}

\newtheorem{thm}{Theorem}[section]

\newtheorem{prop}[thm]{Proposition}
\newtheorem{lemma}[thm]{Lemma}
\newcounter{foo}[section]

\newtheorem{step}[foo]{Step}

\newtheorem{example}[thm]{Example}
\newtheorem{defin}[thm]{Definition}

\theoremstyle{definition}
\newtheorem{remark}[thm]{Remark}

\usepackage{CJK}
\begin{document}
\begin{CJK*}{GBK}{song}

\renewcommand{\baselinestretch}{1.3}
\title{Weakly distance-regular digraphs of one type of arcs}
\author{Yushuang Fan\qquad Zhiqi Wang\qquad Yuefeng Yang\footnote{Corresponding author.}
\\
{\footnotesize \em  School of Science, China University of Geosciences, Beijing, 100083, China}  }
\date{}
\maketitle
\footnote{\scriptsize
{\em E-mail address:} fys@cugb.edu.cn (Yushuang Fan), Shero577@163.com (Zhiqi Wang), yangyf@cugb.edu.cn (Yuefeng Yang).}

\begin{abstract}

In this paper, we classify all commutative weakly distance-regular digraphs of girth $g$ and one type of arcs under the assumption that $p_{(1,g-1),(1,g-1)}^{(2,g-2)}\geq k_{1,g-1}-2$. In consequence, we recover \cite[Theorem 1.1]{YYF18} as a special case of our result.

\medskip
\noindent {\em AMS classification:} 05E30

\noindent {\em Key words:} Association scheme; Cayley digraph; weakly distance-regular digraph.

\end{abstract}

\section{Introduction}

All the digraphs considered in this paper are finite, simple, strongly connected and not undirected. Let $\Gamma$ be a digraph and $V\Gamma$ be its vertex set. For any $x,y\in V\Gamma$, let $\partial(x,y)$ denote the \emph{distance} from $x$ to $y$ in $\Gamma$. The pair $\wz{\partial}(x,y)=(\partial(x,y),\partial(y,x))$ is called the \emph{two-way distance} from $x$ to $y$. Let $\wz{\partial}(\Gamma)$ be the set of all pairs $\wz{\partial}(x,y)$. An arc $(u,v)$ of $\Gamma$ is of \emph{type} $(1,q-1)$ if $\partial(v,u)=q-1$.

As a natural generalization of distance-regular graphs (see \cite{AEB98, DKT16} for the theory of distance-regular graphs), Wang and Suzuki \cite{KSW03} introduced the concept of weakly distance-regular digraphs. A digraph $\Gamma$ is said to be \emph{weakly distance-regular} if, for any $\wz{h}$, $\wz{i}$, $\wz{j}\in\wz{\partial}(\Gamma)$, the number of $z\in V\Gamma$ such that $\wz{\partial}(x,z)=\wz{i}$ and $\wz{\partial}(z,y)=\wz{j}$ is constant whenever $\wz{\partial}(x,y)=\wz{h}$. This constant is denoted by $p_{\wz{i},\wz{j}}^{\wz{h}}$. The integers $p_{\wz{i},\wz{j}}^{\wz{h}}$ are called the \emph{intersection numbers}. The size of $\Gamma_{\tilde{i}}(x):=\{y\in V\Gamma\mid\tilde{\partial}(x,y)=\tilde{i}\}$
depends only on $\tilde{i}$, denoted by $k_{\tilde{i}}$. We say
that $\Gamma$ is \emph{commutative} if $p_{\tilde{i},\tilde{j}}^{\tilde{h}}=p_{\tilde{j},\tilde{i}}^{\tilde{h}}$ for all $\tilde{i}$, $\tilde{j}$,
$\tilde{h}\in\tilde{\partial}(\Gamma)$.

Weakly distance-transitive digraphs is a special class of weakly distance-regular digraphs. A digraph $\Gamma$ is said to be {\em weakly distance-transitive} if, for any vertices $x,y,x'$ and $y'$ satisfying $\wz{\partial}(x,y)=\wz{\partial}(x',y')$, there exists an automorphism $\sigma$ of $\Gamma$ such that $x'=\sigma(x)$ and $y'=\sigma(y)$.

Beginning with \cite{KSW03}, some special families of weakly distance-regular digraphs were classified. See \cite{KSW03,HS04} for valency $2$, \cite{KSW04,YYF16,YYF18} for valency $3$, \cite{HS04} for thin case, \cite{YYF20} for quasi-thin case, and \cite{YYF} for thick case. Especially, in \cite{YYF18}, the third author, Lv and Wang determined all commutative weakly distance-regular digraphs of valency $3$ and one type of arcs. In this paper, we continue to study weakly distance-regular digraphs of one type of arcs, and obtain the following result which improves \cite[Theorem 1.1]{YYF18}.

\begin{thm}\label{Main3}
Let $\Gamma$ be a commutative weakly distance-regular digraph of valency $k$ more than $3$ and girth $g$. Suppose that $\Gamma$ has one type of arcs, that is, $k=k_{1,g-1}$. If $p_{(1,g-1),(1,g-1)}^{(2,g-2)}\geq k-2$, then $\Gamma$ is isomorphic to one of the following digraphs:
\begin{itemize}
\item[{\rm (i)}] ${\rm Cay}(\mathbb{Z}_{11},\{1,3,4,5,9\})$;

\item[{\rm (ii)}] ${\rm Cay}(\mathbb{Z}_{14},\{1,2,4,8,9,11\})$;

\item[{\rm (iii)}] ${\rm Cay}(\mathbb{Z}_{26},\{1,3,9,14,16,22\})$;

\item[{\rm (iv)}] ${\rm Cay}(\mathbb{Z}_{3}\times\mathbb{Z}_{k+1},\{(1,1),(1,2),\ldots,(1,k)\})$;

\item[{\rm (v)}] ${\rm Cay}(\mathbb{Z}_{g}\times\mathbb{Z}_k,\{(1,0),(1,1),\ldots,(1,k-1)\})$.
\end{itemize}
\end{thm}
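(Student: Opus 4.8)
\medskip
\noindent{\bf Strategy of proof.}\quad
The idea is to read off the local structure at a vertex from the hypothesis and then propagate it around the directed $g$-cycles. First the routine reductions: if $g=2$, the one-type-of-arcs condition would make every arc reversible, so $\Gamma$ would be undirected; hence $g\ge 3$. Write $R_1=\Gamma_{(1,g-1)}$ for the arc relation (so $k_{1,g-1}=k$), and put $p=p_{(1,g-1),(1,g-1)}^{(2,g-2)}$, $a=p_{(1,g-1),(1,g-1)}^{(1,g-1)}$. Since $p$ counts a subset of $\Gamma_{(1,g-1)}(x)$, we have $p\le k$, so the hypothesis gives $p\in\{k-2,k-1,k\}$. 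Counting the triples $(z,y)$ with $z\in\Gamma_{(1,g-1)}(x)$, $y\in\Gamma_{(1,g-1)}(z)$ and $\widetilde\partial(x,y)=(2,g-2)$ in two ways yields $k\,b=k_{2,g-2}\,p$, where $b$ is the constant number of out-neighbours of a fixed $z\in\Gamma_{(1,g-1)}(x)$ that lie in $\Gamma_{(2,g-2)}(x)$; and since $\Gamma_{(1,g-1)}(z)$ is the disjoint union of its $a$ vertices in $\Gamma_{(1,g-1)}(x)$, its $b$ vertices in $\Gamma_{(2,g-2)}(x)$, and $k-a-b$ vertices at the remaining two-way distances from $x$, one gets $a+b\le k$. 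So the hypothesis says precisely that for every $y\in\Gamma_{(2,g-2)}(x)$ at most two out-neighbours of $x$ fail to send an arc to $y$.

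I would dispose of the most rigid value $p=k$ first. Then every vertex of $\Gamma_{(2,g-2)}(x)$ receives an arc from every out-neighbour of $x$; combining this with $k\,b=k_{2,g-2}\,p$, $a+b\le k$, and a short walk argument using the girth to exclude out-neighbours of $z$ at distance-$2$ types other than $(2,g-2)$, one gets $a=0$, $b=k$, $k_{2,g-2}=k$. Hence all $k$ out-neighbours of $x$ have the same out-neighbourhood, namely $\Gamma_{(2,g-2)}(x)$, of size $k$; iterating around the $g$-cycles through $x$ then shows $V\Gamma$ splits into $g$ blocks of size $k$ arranged cyclically, with a complete bipartite digraph from each block to the next. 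This is the blow-up of the directed $g$-cycle, on which $\mathbb Z_g\times\mathbb Z_k$ acts regularly by translation, so $\Gamma\cong{\rm Cay}(\mathbb Z_g\times\mathbb Z_k,\{(1,0),\ldots,(1,k-1)\})$, which is (v).

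When $p<k$ the first and principal task is to show $g=3$, and this is the step I expect to be the main obstacle. For $g\ge 4$ one has to track the two-way distances occurring in $R_1^2$ and $R_1^3$ and show, using commutativity and the girth, that the almost-regular configuration forced by $p\ge k-2$ makes the intersection array incompatible with $k>3$, so that no such digraph exists with $g\ge 4$ and $p<k$. Granting $g=3$, the two-way distance set is short and the subcases separate cleanly. For $p=k-1$ the intersection numbers give $a=0$, $b=k-1$, $k-a-b=1$, so each out-neighbour $z$ of $x$ misses exactly one vertex of $\Gamma_{(2,1)}(x)$; assigning to $z$ the missed vertex gives a bijection between consecutive blocks, and following it around the three levels identifies $\Gamma$ with the blow-up of the directed triangle with one consistent perfect matching deleted between consecutive blocks, i.e.\ ${\rm Cay}(\mathbb Z_3\times\mathbb Z_{k+1},\{(1,1),\ldots,(1,k)\})$, which is (iv).

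For $p=k-2$ and $g=3$ one finds $a=2$ (smaller values being excluded by counting together with the identities above), which determines the remaining intersection numbers, and hence $n=|V\Gamma|$, as functions of $k$; standard feasibility of the resulting array then bounds $k$, leaving $k\in\{5,6\}$. A short finite analysis fixes the full array ($n=11$ for $k=5$, $n\in\{14,26\}$ for $k=6$), and exhibiting a regular cyclic subgroup of ${\rm Aut}(\Gamma)$ and reading off its connection set yields ${\rm Cay}(\mathbb Z_{11},\{1,3,4,5,9\})$, ${\rm Cay}(\mathbb Z_{14},\{1,2,4,8,9,11\})$ and ${\rm Cay}(\mathbb Z_{26},\{1,3,9,14,16,22\})$, which are (i)--(iii). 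The proof then closes by checking directly that each of (i)--(v) is a commutative weakly distance-regular digraph with one type of arcs, of valency $k>3$ and girth $g$, satisfying $p\ge k-2$ — indeed $p$ equals $k$, $k-1$, $k-2$ for (v), (iv), and (i)--(iii) respectively — which is routine.
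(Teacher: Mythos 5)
Your overall skeleton matches the paper's: split into $p\in\{k,k-1,k-2\}$, handle $p=k$ as the blow-up of a directed $g$-cycle (the paper simply cites \cite[Theorem 2.3]{ZL11}), reduce to $g=3$ when $p<k$, and then rebuild the digraph from the local intersection data. The first genuine gap is the reduction to $g=3$. You correctly flag it as ``the main obstacle'' and then offer only the sentence that one ``has to track the two-way distances occurring in $R_1^2$ and $R_1^3$'' and derive an incompatibility; that is a restatement of the goal, not an argument. In the paper this reduction is Lemma \ref{jb-1.2} together with the entire Section 5: Lemma \ref{jb-1.2} disposes of $p=k-a>a$, but for $p=k-2$ with $k=4$ (so $p=2$) it is silent, and excluding $g\ge4$ there takes nine further steps, including a quotient construction over a closed subset and a comparison with ${\rm Cay}(\mathbb Z_g\times\mathbb Z_g,\{(1,0),(0,1)\})$. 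Nothing in your proposal indicates how this, the largest and hardest part of the proof, would be carried out.

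The second gap is in the $p=k-2$, $g=3$ analysis, which you organize around the claim that $a:=p_{(1,2),(1,2)}^{(1,2)}=2$, ``smaller values being excluded by counting.'' That claim is false: for ${\rm Cay}(\mathbb Z_{26},\{1,3,9,14,16,22\})$, your case (iii), one checks that $S+S$ is disjoint from $S$, so $a=0$. The correct dichotomy, used in the paper, is on whether $\Gamma_{1,2}\in\Gamma_{1,2}^2$: the branch $a>0$ (where eventually $a=2$) yields (i) and (ii) with $|V\Gamma|=11,14$ (Lemma \ref{g=3-1}), while the branch $a=0$ yields (iii) with $k=6$ and $|V\Gamma|=26$ only after five further lemmas (Lemmas \ref{g=3-2}--\ref{g=3-7}). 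As written your route would either be wrong or would erroneously exclude (iii), yet you list $n=26$ among the outcomes, so the proposal is internally inconsistent at this point. Two smaller issues: in the $p=k-1$ case you tacitly assume the ``extra'' out-neighbour of each $z\in\Gamma_{1,2}(x)$ lands in a singleton class ($k_{2,j}=1$), but the alternative $k_{2,j}=k$ with $p_{(1,2),(1,2)}^{(2,j)}=1$ is consistent with every count you invoke and is exactly what Step \ref{3.2} of the paper spends a page eliminating; and the sporadic examples (i)--(iii) are identified in the paper by computing $|V\Gamma|$ and $\tilde{\partial}(\Gamma)$ and appealing to Hanaki's classifications \cite{AH,AH26}, not by exhibiting a regular cyclic subgroup, which you would still need to justify.
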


In order to give a high-level description of our proof of Theorem \ref{Main3}, we need
additional notations and terminologies. Let $\Gamma$ be a weakly distance-regular digraph and $R=\{\Gamma_{\wz{i}}\mid\wz{i}\in\wz{\partial}(\Gamma)\}$, where  $\Gamma_{\wz{i}}=\{(x,y)\in V\Gamma\times V\Gamma\mid\wz{\partial}(x,y)=\wz{i}\}$. Then $(V\Gamma,R)$ is an association scheme (see \cite{EB84,PHZ96,PHZ05} for the theory of association schemes), which is called
the \emph{attached scheme} of $\Gamma$. For each $(i,j)\in\wz{\partial}(\Gamma)$, let $A_{i,j}$ denote a matrix with rows and columns indexed by $V\Gamma$ such that $(A_{i,j})_{x,y}=1$ if $\wz{\partial}(x,y)=(i,j)$, and $(A_{i,j})_{x,y}=0$ otherwise.  It follows from the definition of association schemes that
\begin{eqnarray}
A_{i,i'}A_{j,j'}=\sum_{(h,h')\in\wz{\partial}(\Gamma)}p_{(i,i'),(j,j')}^{(h,h')}A_{h,h'}.\nonumber
\end{eqnarray}
For two nonempty subsets $E$ and $F$ of $R$, define
\begin{eqnarray}
EF=\{\Gamma_{\wz{h}}\mid\sum_{\Gamma_{\wz{i}}\in E}\sum_{\Gamma_{\wz{j}}\in F}p_{\wz{i},\wz{j}}^{\wz{h}}\neq0\}, \nonumber
\end{eqnarray}
and write $\Gamma_{\wz{i}}\Gamma_{\wz{j}}$ instead of $\{\Gamma_{\wz{i}}\}\{\Gamma_{\wz{j}}\}$.  For any $(a,b)\in\wz{\partial}(\Gamma)$, we usually write $\Gamma_{a,b}$ (resp. $k_{a,b}$) instead of $\Gamma_{(a,b)}$ (resp. $k_{(a,b)}$).

\vspace{3ex}

\noindent\textbf{Outline of the proof of Theorem \ref{Main3}.} By \cite[Theorem 2.3]{ZL11}, if $p_{(1,g-1),(1,g-1)}^{(2,g-2)}=k$, then $\Gamma$ is isomorphic to the digraph in (v). We only need to consider the case that $p_{(1,g-1),(1,g-1)}^{(2,g-2)}=k-1$ or $k-2$.

In Section 2, we give some basic results concerning commutative weakly distance-regular
digraphs which will be used frequently in this paper.

In Section 3, we prove our main result under the assumption that $p_{(1,g-1),(1,g-1)}^{(2,g-2)}=k-1$. We begin with determining the decompositions of $A_{1,g-1}^2$ and $A_{1,g-1}A_{g-1,1}$, respectively. Based on these decompositions, we construct $\Gamma$.

In Section 4, we prove our main result under the assumption that $p_{(1,g-1),(1,g-1)}^{(2,g-2)}=k-2$. We divide our proof into two cases according to whether the set $\Gamma_{1,g-1}^2$ contains $\Gamma_{1,g-1}$. For each case, we compute the number of vertices of $\Gamma$, and determine these digraphs according to \cite{AH} and \cite{AH26}.

In Section 5, we give a proof of Lemma \ref{g=3}.

\section{Preliminaries}

In this section, we always assume that $\Gamma$ is a commutative weakly distance-regular digraph. Now we list some basic properties of intersection numbers.

\begin{lemma}\label{jb}
{\rm (\cite[Proposition 5.1]{ZA99} and \cite[Chapter \Rmnum{2}, Proposition 2.2]{EB84})} For each $\wz{i}:=(a,b)\in\wz{\partial}(\Gamma)$, define $\wz{i}^{*}=(b,a)$. The following hold:
\begin{itemize}
\item [{\rm(i)}] $k_{\wz{d}}k_{\wz{e}}=\sum_{\wz{f}\in\wz{\partial}(\Gamma)}p_{\wz{d},\wz{e}}^{\wz{f}}k_{\wz{f}}$;

\item [{\rm(ii)}] $p_{\wz{d},\wz{e}}^{\wz{f}}k_{\wz{f}}=p_{\wz{f},\wz{e}^{*}}^{\wz{d}}k_{\wz{d}}=p_{\wz{d}^{*},\wz{f}}^{\wz{e}}k_{\wz{e}}$;

\item [{\rm(iii)}] $\sum_{\wz{e}\in\wz{\partial}(\Gamma)}p_{\wz{d},\wz{e}}^{\wz{f}}=k_{\wz{d}}$;

\item [{\rm(iv)}] $\sum_{\wz{f}\in\wz{\partial}(\Gamma)}p_{\wz{d},\wz{e}}^{\wz{f}}p_{\wz{g},\wz{f}}^{\wz{h}}=\sum_{\wz{l}\in\wz{\partial}(\Gamma)}p_{\wz{g},\wz{d}}^{\wz{l}}p_{\wz{l},\wz{e}}^{\wz{h}}$;

\item [{\rm(v)}] ${\rm lcm}(k_{\wz{d}},k_{\wz{e}})\mid p_{\wz{d},\wz{e}}^{\wz{f}}k_{\wz{f}}$;

\item [{\rm(vi)}] $|\Gamma_{\wz{d}}\Gamma_{\wz{e}}|\leq{\rm gcd}(k_{\wz{d}},k_{\wz{e}})$.
\end{itemize}
\end{lemma}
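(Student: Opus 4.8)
The plan is to realize all six identities as consequences of the Bose--Mesner algebra structure of the attached scheme $(V\Gamma,R)$, using three elementary facts about the adjacency matrices $A_{\tilde i}$ beyond the defining relation $A_{\tilde d}A_{\tilde e}=\sum_{\tilde f}p_{\tilde d,\tilde e}^{\tilde f}A_{\tilde f}$: first, $\sum_{\tilde i\in\tilde\partial(\Gamma)}A_{\tilde i}=J$; second, $A_{\tilde i}^{\top}=A_{\tilde i^{*}}$, which encodes the reversal of two-way distances; and third, $A_{\tilde i}J=JA_{\tilde i}=k_{\tilde i}J$, expressing that every row and column of $A_{\tilde i}$ has exactly $k_{\tilde i}$ ones. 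Since the $A_{\tilde f}$ are linearly independent, I can derive (i)--(vi) by comparing coefficients in this basis.

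For (i) and (iii) I would multiply the defining relation by $J$: from $A_{\tilde d}A_{\tilde e}J=k_{\tilde e}k_{\tilde d}J$ together with $A_{\tilde d}A_{\tilde e}J=\sum_{\tilde f}p_{\tilde d,\tilde e}^{\tilde f}k_{\tilde f}J$ I read off (i), while expanding $A_{\tilde d}J=A_{\tilde d}\sum_{\tilde e}A_{\tilde e}$ in two ways and comparing the coefficient of $A_{\tilde f}$ yields (iii). Identity (iv) is simply the associativity $A_{\tilde g}(A_{\tilde d}A_{\tilde e})=(A_{\tilde g}A_{\tilde d})A_{\tilde e}$: expanding both sides via the defining relation and equating the coefficient of $A_{\tilde h}$ reproduces the two sides of (iv) verbatim.

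The heart of the argument is (ii), which I would prove combinatorially by counting, in three ways, the set $T$ of ordered triples $(x,y,z)$ with $\tilde\partial(x,z)=\tilde d$, $\tilde\partial(z,y)=\tilde e$ and $\tilde\partial(x,y)=\tilde f$. Choosing first the pair at two-way distance $\tilde f$ and then the apex $z$ gives $|T|=n\,k_{\tilde f}\,p_{\tilde d,\tilde e}^{\tilde f}$, where $n=|V\Gamma|$. Reading the same triple as a path with apex $y$ (so $\tilde\partial(x,y)=\tilde f$, $\tilde\partial(y,z)=\tilde e^{*}$, over the base $\tilde\partial(x,z)=\tilde d$) gives $|T|=n\,k_{\tilde d}\,p_{\tilde f,\tilde e^{*}}^{\tilde d}$, and reading it with apex $x$ (base $\tilde\partial(z,y)=\tilde e$) gives $|T|=n\,k_{\tilde e}\,p_{\tilde d^{*},\tilde f}^{\tilde e}$; cancelling $n$ yields (ii). The only subtlety is the bookkeeping of the $*$-operation, namely correctly reversing each two-way distance when a vertex changes its role from endpoint to apex; this is where I expect to spend the most care.

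Finally, (v) and (vi) are corollaries. From (ii), both $k_{\tilde d}$ and $k_{\tilde e}$ divide the nonnegative integer $p_{\tilde d,\tilde e}^{\tilde f}k_{\tilde f}$, hence so does their least common multiple, which is (v). For (vi), recall $\Gamma_{\tilde d}\Gamma_{\tilde e}=\{\Gamma_{\tilde h}:p_{\tilde d,\tilde e}^{\tilde h}\neq0\}$, and for each such $\tilde h$ identity (v) forces $p_{\tilde d,\tilde e}^{\tilde h}k_{\tilde h}\geq{\rm lcm}(k_{\tilde d},k_{\tilde e})=k_{\tilde d}k_{\tilde e}/{\rm gcd}(k_{\tilde d},k_{\tilde e})$; summing these lower bounds and invoking (i), $k_{\tilde d}k_{\tilde e}=\sum_{\tilde h}p_{\tilde d,\tilde e}^{\tilde h}k_{\tilde h}\geq|\Gamma_{\tilde d}\Gamma_{\tilde e}|\cdot k_{\tilde d}k_{\tilde e}/{\rm gcd}(k_{\tilde d},k_{\tilde e})$, and dividing through gives (vi). As the statement is quoted from \cite{ZA99,EB84}, I would keep the write-up brief, emphasising only the triangle-counting for (ii) and the lcm/gcd bookkeeping for (v)--(vi).
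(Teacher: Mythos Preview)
Your proof is correct and is precisely the standard argument; the paper itself does not supply a proof of this lemma but merely cites \cite{ZA99,EB84}, so there is nothing to compare against beyond noting that your Bose--Mesner/triple-counting derivation is the one found in those references.
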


\begin{lemma}\label{jb2}
Suppose that $(1,q-1)\in\wz{\partial}(\Gamma)$ with $q>1$. Then the following conditions are equivalent:
\begin{itemize}
\item [{\rm(i)}] $\Gamma_{1,q-1}\in \Gamma_{1,q-1}^2$;

\item [{\rm(ii)}] $p_{(1,q-1),(1,q-1)}^{(1,q-1)}\neq0$;

\item [{\rm(iii)}] $\{\Gamma_{1,q-1},\Gamma_{q-1,1}\}\subseteq \Gamma_{1,q-1}\Gamma_{q-1,1}$.
\end{itemize}
\end{lemma}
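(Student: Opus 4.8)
The plan is to reduce everything to a single chain of equalities among intersection numbers. First, the equivalence of (i) and (ii) is purely a matter of unwinding the definition of the product of subsets of $R$: by that definition $\Gamma_{1,q-1}^{2}=\{\Gamma_{\wz h}\mid p_{(1,q-1),(1,q-1)}^{\wz h}\neq 0\}$, so $\Gamma_{1,q-1}\in\Gamma_{1,q-1}^{2}$ holds exactly when $p_{(1,q-1),(1,q-1)}^{(1,q-1)}\neq 0$. In the same way $\Gamma_{1,q-1}\Gamma_{q-1,1}=\{\Gamma_{\wz h}\mid p_{(1,q-1),(q-1,1)}^{\wz h}\neq 0\}$, so (iii) is the conjunction of $p_{(1,q-1),(q-1,1)}^{(1,q-1)}\neq 0$ and $p_{(1,q-1),(q-1,1)}^{(q-1,1)}\neq 0$. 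Hence it suffices to show that the three numbers $p_{(1,q-1),(1,q-1)}^{(1,q-1)}$, $p_{(1,q-1),(q-1,1)}^{(1,q-1)}$, $p_{(1,q-1),(q-1,1)}^{(q-1,1)}$ are all zero or all nonzero; I would prove the stronger assertion that they are equal.

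For the first equality I would apply Lemma \ref{jb}(ii) with $(\wz d,\wz e,\wz f)=((1,q-1),(q-1,1),(1,q-1))$: since $(q-1,1)^{*}=(1,q-1)$, the relation $p_{\wz d,\wz e}^{\wz f}k_{\wz f}=p_{\wz f,\wz e^{*}}^{\wz d}k_{\wz d}$ reads $p_{(1,q-1),(q-1,1)}^{(1,q-1)}k_{1,q-1}=p_{(1,q-1),(1,q-1)}^{(1,q-1)}k_{1,q-1}$, giving $p_{(1,q-1),(q-1,1)}^{(1,q-1)}=p_{(1,q-1),(1,q-1)}^{(1,q-1)}$. For the second equality, Lemma \ref{jb}(ii) with $(\wz d,\wz e,\wz f)=((1,q-1),(q-1,1),(q-1,1))$ and the relation $p_{\wz d,\wz e}^{\wz f}k_{\wz f}=p_{\wz d^{*},\wz f}^{\wz e}k_{\wz e}$ yield $p_{(1,q-1),(q-1,1)}^{(q-1,1)}=p_{(q-1,1),(q-1,1)}^{(q-1,1)}$; then transposing the matrix identity $A_{1,q-1}^{2}=\sum_{\wz h}p_{(1,q-1),(1,q-1)}^{\wz h}A_{\wz h}$ and using $A_{\wz h}^{\top}=A_{\wz h^{*}}$ shows $p_{(q-1,1),(q-1,1)}^{\wz g}=p_{(1,q-1),(1,q-1)}^{\wz g^{*}}$ for every $\wz g\in\wz\partial(\Gamma)$, so in particular $p_{(q-1,1),(q-1,1)}^{(q-1,1)}=p_{(1,q-1),(1,q-1)}^{(1,q-1)}$. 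Concatenating these identities shows that all three numbers equal $p_{(1,q-1),(1,q-1)}^{(1,q-1)}$, which finishes the proof. (The transpose step could equally be replaced by one further application of Lemma \ref{jb}(ii) together with commutativity and $k_{1,q-1}=k_{q-1,1}$, the latter itself being the case $\wz f=(0,0)$ of Lemma \ref{jb}(ii).)

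I do not expect a genuine obstacle here: the statement is essentially bookkeeping with the standard scheme identities collected in Lemma \ref{jb}, and the only point to watch is keeping the adjoints $(\cdot)^{*}$ straight and invoking commutativity exactly where it is needed. The hypothesis $q>1$ is used only to make $(1,q-1)\in\wz\partial(\Gamma)$ meaningful; in the degenerate case $q=2$ one has $(1,q-1)=(q-1,1)$ and the three conditions coincide outright.
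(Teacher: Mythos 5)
Your proposal is correct and follows essentially the same route as the paper: the paper's entire proof is the single instantiation $\wz{d}=\wz{e}=\wz{f}=(1,q-1)$ in Lemma \ref{jb}(ii), which immediately yields the equality of the three intersection numbers you identify, while you obtain the same chain of equalities via two separate instantiations of Lemma \ref{jb}(ii) plus a transpose argument. The extra bookkeeping is harmless but not a different method.
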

\begin{proof}
By setting $\wz{d}=\wz{e}=\wz{f}=(1,q-1)$ in Lemma \ref{jb} (ii), we get $p_{(1,q-1),(1,q-1)}^{(1,q-1)}=p_{(1,q-1),(q-1,1)}^{(1,q-1)}=p_{(1,q-1),(q-1,1)}^{(q-1,1)}$. The desired result follows.
\end{proof}

\begin{lemma}\label{jb3}
Suppose that each arc of $\Gamma$ is of type $(1,g-1)$. If $a<i$ for each $\Gamma_{a,b}\in\Gamma_{1,g-1}^i$, then $\wz{\partial}(\Gamma)=\{(s,t)\mid\Gamma_{s,t}\in\Gamma_{1,g-1}^j~\textrm{for some $j<i$}\}$.
\end{lemma}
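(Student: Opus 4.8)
The plan is to show a double inclusion between $\wz{\partial}(\Gamma)$ and the index set $S:=\{(s,t)\mid\Gamma_{s,t}\in\Gamma_{1,g-1}^j~\textrm{for some }j<i\}$. The inclusion $S\subseteq\wz{\partial}(\Gamma)$ is immediate, since every element of $\Gamma_{1,g-1}^j$ is by definition a relation $\Gamma_{\wz{h}}$ of the attached scheme, hence $\wz{h}\in\wz{\partial}(\Gamma)$. The content is the reverse inclusion: every two-way distance $(s,t)$ actually occurs as an index of some relation appearing in a power $\Gamma_{1,g-1}^j$ with $j<i$.

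First I would set up the key structural fact: since $\Gamma$ is strongly connected and every arc has type $(1,g-1)$, the relation $\Gamma_{1,g-1}$ generates the whole scheme, i.e. $\bigcup_{j\geq 0}\Gamma_{1,g-1}^j=R$. Concretely, for any vertices $x,y$ with $\partial(x,y)=s$ there is a directed path $x=x_0,x_1,\dots,x_s=y$ with each $(x_{\ell},x_{\ell+1})$ an arc, so $\Gamma_{\wz{\partial}(x,y)}\in\Gamma_{1,g-1}^{s}$; more precisely one argues that $\Gamma_{1,g-1}^{s}$ contains $\Gamma_{s,t}$ whenever $(s,t)\in\wz{\partial}(\Gamma)$, because the $(x,y)$-entry of $A_{1,g-1}^{s}$ counts directed walks of length $s$ from $x$ to $y$ and is nonzero. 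Thus for each $(s,t)\in\wz{\partial}(\Gamma)$ we have $\Gamma_{s,t}\in\Gamma_{1,g-1}^{s}$ with $s$ being the first coordinate.

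The remaining point is to replace the exponent $s$ by some $j<i$. Here I would use the hypothesis that $a<i$ for every $\Gamma_{a,b}\in\Gamma_{1,g-1}^{i}$. The idea is a descent/stabilization argument on the powers $\Gamma_{1,g-1}^{j}$. I would show that the chain of sets $\Gamma_{1,g-1}^{0},\Gamma_{1,g-1}^{1},\dots$ stabilizes at step $i$ in the sense that $\Gamma_{1,g-1}^{i}\subseteq\bigcup_{j<i}\Gamma_{1,g-1}^{j}$; once this is established, multiplying on the left by $\Gamma_{1,g-1}$ repeatedly yields $\Gamma_{1,g-1}^{m}\subseteq\bigcup_{j<i}\Gamma_{1,g-1}^{j}$ for all $m\geq i$, hence $R=\bigcup_{j\geq 0}\Gamma_{1,g-1}^{j}=\bigcup_{j<i}\Gamma_{1,g-1}^{j}$, which is exactly the claim. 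To get $\Gamma_{1,g-1}^{i}\subseteq\bigcup_{j<i}\Gamma_{1,g-1}^{j}$, take $\Gamma_{a,b}\in\Gamma_{1,g-1}^{i}$; by hypothesis $a<i$, and by the previous paragraph (applied with $s=a$) $\Gamma_{a,b}\in\Gamma_{1,g-1}^{a}$ with $a<i$.

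I expect the main obstacle to be the careful justification that the first coordinate of $\wz{\partial}(x,y)$ equals the minimum walk length realizing the relation, i.e. that $\Gamma_{s,t}\in\Gamma_{1,g-1}^{s}$ and not merely $\Gamma_{s,t}\in\Gamma_{1,g-1}^{j}$ for some possibly larger $j$; this uses that $\partial(x,y)=s$ means there is a directed path of length exactly $s$, together with the fact that walks can be taken to be paths here because all arcs are of the single type $(1,g-1)$ and hence an appropriate power of $A_{1,g-1}$ has a nonzero $(x,y)$-entry. Once that indexing fact is pinned down, the descent argument is routine bookkeeping with the products $EF$ of subsets of $R$ defined in the preliminaries.
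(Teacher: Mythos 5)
Your proposal is correct and follows essentially the same route as the paper: the key observation that $\Gamma_{a,b}\in\Gamma_{1,g-1}^{a}$ (every relation lies in the power of $\Gamma_{1,g-1}$ given by its first coordinate, since all arcs have one type), combined with the hypothesis $a<i$ to get $\Gamma_{1,g-1}^{i}\subseteq\bigcup_{j<i}\Gamma_{1,g-1}^{j}$, then induction on higher powers and strong connectivity. The indexing fact you flag as the "main obstacle" is handled exactly as you suggest and is the same implicit step in the paper's argument.
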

\begin{proof}
For each $\Gamma_{a,b}\in\Gamma_{1,g-1}^i$, since $a<i$, we have $\Gamma_{a,b}\in\Gamma_{1,g-1}^a$, which implies $\Gamma_{a,b}\Gamma_{1,g-1}\subseteq\Gamma_{1,g-1}^{a+1}$, and so $\Gamma_{1,g-1}^{i+1}\subseteq\cup_{j<i}\Gamma_{1,g-1}^j$. By induction, we get $\Gamma_{1,g-1}^{i+l}\subseteq\cup_{j<i}\Gamma_{1,g-1}^j$ for $l>0$. Since $\Gamma$ is strongly connected, the desired result follows.
\end{proof}

Let $P_{\wz{i},\wz{j}}(x,y)=\Gamma_{\wz{i}}(x)\cap\Gamma_{\wz{j}^{*}}(y)$ for all $\wz{i},\wz{j}\in\wz{\partial}(\Gamma)$ and $x,y\in V\Gamma$.

\begin{lemma}\label{jb-1.2}
Suppose that each arc in $\Gamma$ is of type $(1,g-1)$. If $p_{(1,g-1),(1,g-1)}^{(2,g-2)}=k_{1,g-1}-a>a$ for $a\in\{1,2\}$, then $g=3$.
\end{lemma}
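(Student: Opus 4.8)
\textbf{Proof proposal for Lemma \ref{jb-1.2}.}

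The plan is to argue by contradiction: assume $g \geq 4$ and derive a conflict with the hypothesis $p_{(1,g-1),(1,g-1)}^{(2,g-2)} = k_{1,g-1} - a > a$ for some $a \in \{1,2\}$. Write $k := k_{1,g-1}$, so $k \geq 2a+1 \geq 3$, and abbreviate $\partial := \Gamma_{1,g-1}$ and $\partial^{*} := \Gamma_{g-1,1}$. Fix an arc $(x,y)$, so $\wz{\partial}(x,y) = (1,g-1)$, and consider the set $\Gamma_{1,g-1}(x)$ of out-neighbours of $x$; it has size $k$, and $y$ is one of its elements. The hypothesis says that $k - a$ of the vertices $z \in \Gamma_{1,g-1}(x)$ satisfy $\wz{\partial}(z,y) = (2,g-2)$. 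First I would pin down the two-way distances $\wz{\partial}(z,y)$ for \emph{all} $z \in \Gamma_{1,g-1}(x)$: one such $z$ is $y$ itself (contributing $(0,0)$), so the remaining $k-1$ out-neighbours distribute among $(2,g-2)$ and at most $a-1$ other pairs; hence when $a = 1$ every out-neighbour of $x$ other than $y$ is at two-way distance $(2,g-2)$ from $y$, and when $a=2$ there is exactly one out-neighbour $w$ of $x$ with $\wz{\partial}(w,y) \notin \{(0,0),(2,g-2)\}$.

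Next I would exploit the girth. Since the girth is $g$ and each arc is of type $(1,g-1)$, for an arc $(x,z)$ we have $\partial(z,x) = g-1$, and more generally the "short" directions behave like a directed path/cycle structure of length $g$. The key local computation is to look at $P_{(1,g-1),(2,g-2)^{*}}(x,y) = \Gamma_{1,g-1}(x) \cap \Gamma_{(g-2,2)}(y)$ and the analogous configuration around $y$: if $z, z'$ are two distinct out-neighbours of $x$ both at two-way distance $(2,g-2)$ from $y$, then $\wz{\partial}(z,z')$ is constrained — walking $x \to z \to y$ in two steps and $x \to z' \to y$ in two steps forces $z$ and $z'$ to be "close", and I expect to show $\partial(z,z') \le 2$ and $\partial(z',z) \le 2$, i.e. these $k-a$ vertices together with $y$ sit inside a small ball. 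Combining this with Lemma \ref{jb} (especially parts (i), (iii) and (vi)) applied to products like $\Gamma_{1,g-1}\Gamma_{g-1,1}$ and $\Gamma_{1,g-1}^{2}$, and with Lemma \ref{jb2} to detect whether $\partial \in \partial^{2}$, I would extract numerical restrictions on the valencies $k_{\wz{i}}$ of the pairs $\wz{i}$ that can occur; the point is that $g \geq 4$ makes $(2,g-2)$ a pair with $g-2 \geq 2$ in the second coordinate, and iterating $\Gamma_{1,g-1}^{j}$ (as in Lemma \ref{jb3}) would force the appearance of pairs $(a',b')$ with $a'$ large before the digraph can "close up", contradicting the compactness forced by the large intersection number $k-a$.

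More concretely, the engine of the contradiction should be a counting identity: using Lemma \ref{jb} (ii) with $\wz{d} = \wz{e} = (1,g-1)$, $\wz{f} = (2,g-2)$ gives $p_{(1,g-1),(1,g-1)}^{(2,g-2)} k_{2,g-2} = p_{(2,g-2),(g-1,1)}^{(1,g-1)} k_{1,g-1}$, so $(k-a) k_{2,g-2} = k \cdot p_{(2,g-2),(g-1,1)}^{(1,g-1)}$. Since $\gcd(k-a,k) = \gcd(a,k) \mid a \le 2$, this forces $k \mid (\text{small number}) \cdot k_{2,g-2}$ with tight control, and then I would play this against the bound $p_{(2,g-2),(g-1,1)}^{(1,g-1)} \le k_{g-1,1} = k$ from Lemma \ref{jb} (iii) and against $k_{2,g-2} \le$ (number of out-neighbours of $x$ at distance $2$ from $y$) $= k - a$ or so, squeezing out $k_{2,g-2} = k$ or a similarly rigid value that is incompatible with $g \ge 4$. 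I expect the main obstacle to be the case analysis when $a = 2$ and the extra out-neighbour $w$ is present: one must identify $\wz{\partial}(w,y)$ (the natural candidates being $(g-1,1)$ or $(2,g-3)$-type pairs, which only make sense for small $g$) and rule out each, which is where the hypothesis $k > 3$ (ensuring $k-a \ge 2$, so at least two "good" out-neighbours exist to anchor the geometry) gets used. The cleanest route to closing this may well be to reduce, when $g = 3$ fails to hold, to a known classification or to Lemma \ref{g=3} itself — but since Lemma \ref{g=3} is proved later, the present lemma should instead be self-contained via the valency/divisibility squeeze sketched above.
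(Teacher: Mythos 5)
Your proposal is a plan rather than a proof, and two of its load-bearing steps are incorrect as stated, while the actual engine of the argument is missing. First, the local reformulation of the hypothesis is wrong: for a fixed arc $(x,y)$, the number of $z\in\Gamma_{1,g-1}(x)$ with $\wz{\partial}(z,y)=(2,g-2)$ is $p_{(1,g-1),(2,g-2)}^{(1,g-1)}$, not $p_{(1,g-1),(1,g-1)}^{(2,g-2)}=k-a$ (the latter presupposes $\wz{\partial}(x,y)=(2,g-2)$); the two are related only through Lemma \ref{jb} (ii) and the unknown valency $k_{2,g-2}$, so the dichotomy you draw between the cases $a=1$ and $a=2$ does not follow. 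Second, the proposed squeeze ``$k_{2,g-2}\leq k-a$ or so'' is false: $k_{2,g-2}$ counts all vertices at two-way distance $(2,g-2)$ from a fixed vertex and is in general larger than $k$ (in the relevant subcase of the paper one ends with $k=6$ and $k_{2,g-2}=9$). You do correctly isolate the identity $(k-a)k_{2,g-2}=k\,p_{(2,g-2),(g-1,1)}^{(1,g-1)}$, but for it to bite one needs the lower bound $p_{(2,g-2),(g-1,1)}^{(1,g-1)}\geq k-a$, and this is exactly the geometric input your sketch lacks: the paper takes a circuit $(x_{0,0},x_{1,0},\ldots,x_{g-1,0})$ of length $g$ and uses $g>3$ to show that every $z\in P_{(1,g-1),(1,g-1)}(x_{i-1,0},x_{i+1,0})$ lies on a circuit through $x_{i+1,0},\ldots,x_{i-1,0}$ and hence belongs to $P_{(2,g-2),(g-1,1)}(x_{i-2,0},x_{i-1,0})$.

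Even granting that inequality, the strict case $p_{(2,g-2),(g-1,1)}^{(1,g-1)}>k-a$ only pins down $a=2$, $k=6$, $k_{2,g-2}=9$ by gcd considerations; one must then still compute $\Gamma_{1,g-1}^2=\{\Gamma_{2,g-2}\}$ and $\Gamma_{g-1,1}\Gamma_{1,g-1}=\{\Gamma_{0,0},\Gamma_{g,g}\}$ and extract the non-integral value $p_{(g-1,1),(1,g-1)}^{(g,g)}=18/5$ from Lemma \ref{jb} (iv) to finish. More seriously, the equality case $p_{(2,g-2),(g-1,1)}^{(1,g-1)}=k-a$ is untouched by any divisibility squeeze and occupies the bulk of the paper's proof: there one shows that the sets $P_{(1,g-1),(1,g-1)}(x_{0,0},x_{2,j})$, as $x_{2,j}$ varies, partition $\Gamma_{1,g-1}(x_{0,0})$ into blocks of size $l=k-a$, whence $(k-a)\mid k$ and so $(k-a)\mid a$, contradicting $k-a>a$. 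Your sketch contains no mechanism for this case, and the speculative intermediate claims (e.g.\ $\partial(z,z')\leq2$ for the ``good'' out-neighbours) are neither proved nor, as far as I can see, true in the form needed. As it stands the proposal has genuine gaps and does not constitute a proof.
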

\begin{proof}
Let $(x_{0,0},x_{1,0},\ldots,x_{g-1,0})$ be a circuit, where the first subscription of $x$ are read modulo $g$. Without loss of generality, we may assume
\begin{align}
P_{(1,g-1),(1,g-1)}(x_{i-1,0},x_{i+1,0})=\{x_{i,j}\mid 0\leq j\leq l-1\}\label{eq1}
\end{align}
for all $i$, where $l=p_{(1,g-1),(1,g-1)}^{(2,g-2)}$.

Assume the contrary, namely, $g>3$. Since $(x_{i,j},x_{i+1,0},x_{i+2,0},\ldots,x_{i-1,0})$ is a circuit for $0\leq j\leq l-1$, we have $(x_{i-2,0},x_{i,j})\in\Gamma_{2,g-2}$. By \eqref{eq1}, one gets $P_{(1,g-1),(1,g-1)}(x_{i-1,0},x_{i+1,0})\subseteq P_{(2,g-2),(g-1,1)}(x_{i-2,0},x_{i-1,0})$ and $p_{(2,g-2),(g-1,1)}^{(1,g-1)}\geq l$.

\textbf{Case 1.} $p_{(2,g-2),(g-1,1)}^{(1,g-1)}>l$.

By Lemma \ref{jb} (ii), we obtain $p_{(1,g-1),(1,g-1)}^{(2,g-2)}k_{2,g-2}=p_{(2,g-2),(g-1,1)}^{(1,g-1)}k_{1,g-1}$. Since $p_{(2,g-2),(g-1,1)}^{(1,g-1)}>p_{(1,g-1),(1,g-1)}^{(2,g-2)}=k_{1,g-1}-a$ for some $a\in\{1,2\}$, we have $k_{2,g-2}=k_{1,g-1}(k_{1,g-1}-1)/(k_{1,g-1}-2)$ or $k_{2,g-2}=k_{1,g-1}^2/(k_{1,g-1}-a)$. Since ${\rm gcd}(k_{1,g-1},k_{1,g-1}-1)={\rm gcd}(k_{1,g-1}-1,k_{1,g-1}-2)=1$, one gets $a=2$ and $k_{1,g-1}-2\mid k_{1,g-1}^2$. In view of $k_{1,g-1}>4$, we obtain $k_{1,g-1}=6$, which implies $k_{2,g-2}=9$, $p_{(1,g-1),(1,g-1)}^{(2,g-2)}=4$ and $p_{(2,g-2),(g-1,1)}^{(1,g-1)}=6$. By the commutativity of $\Gamma$ and setting $\wz{f}=\wz{d}^*=(1,g-1)$ in Lemma \ref{jb} (iii), one has $\Gamma_{1,g-1}^2=\{\Gamma_{2,g-2}\}$.

Pick distinct vertices $x,y,y'$ such that $x\in P_{(g-1,1),(1,g-1)}(y,y')$. Since $\Gamma_{1,g-1}^2=\{\Gamma_{2,g-2}\}$ with $g>3$, from Lemma \ref{jb2}, we have $\partial(y,y')>1$. Let $\Gamma_{1,g-1}(y)=\{z_i\mid0\leq i\leq5\}$ and $(y,z_0,w)$ be a path such that $\partial(y,y')=\partial(w,y')+2$. Since $p_{(1,g-1),(1,g-1)}^{(2,g-2)}=4$, we may assume $P_{(1,g-1),(1,g-1)}(y,w)=\{z_0,z_1,z_2,z_3\}$. If $(y',z_i)\notin\Gamma_{1,g-1}$ for all $0\leq i\leq 3$, from $\Gamma_{1,g-1}^2=\{\Gamma_{2,g-2}\}$, then $\Gamma_{2,g-2}(x)\subseteq\Gamma_{1,g-1}(y')\cup\{z_0,z_1,z_2,z_3\}$, contrary to the fact that $k_{2,g-2}=9$. Without loss of generality, we may assume $(y',z_0)\in\Gamma_{1,g-1}$. It follows that $(y',w)\in\Gamma_{2,g-2}$, and so $\partial(y,y')=g$. Similarly, $\partial(y',y)=g$. Hence, $\Gamma_{g-1,1}\Gamma_{1,g-1}=\{\Gamma_{0,0},\Gamma_{g,g}\}$.

By setting $\wz{f}=\wz{d}=(1,g-1)$ in Lemma \ref{jb} (iii), one has $p_{(1,g-1),(g,g)}^{(1,g-1)}=5$. Lemma \ref{jb} (iv) implies $p_{(1,g-1),(1,g-1)}^{(2,g-2)}p_{(g-1,1),(2,g-2)}^{(1,g-1)}=6+p_{(g-1,1),(1,g-1)}^{(g,g)}p_{(g,g),(1,g-1)}^{(1,g-1)}$. It follows from the commutativity of $\Gamma$ that $p_{(g-1,1),(1,g-1)}^{(g,g)}=18/5$, a contradiction.

\textbf{Case 2.} $p_{(2,g-2),(g-1,1)}^{(1,g-1)}=l$.

Since $P_{(1,g-1),(1,g-1)}(x_{i-1,0},x_{i+1,0})\subseteq P_{(2,g-2),(g-1,1)}(x_{i-2,0},x_{i-1,0})$ for all $i$, from \eqref{eq1}, we have
\begin{align}
P_{(2,g-2),(g-1,1)}(x_{i-2,0},x_{i-1,0})=\{x_{i,j}\mid 0\leq j\leq l-1\}.\label{eq2}
\end{align}
For $y_i\in P_{(1,g-1),(1,g-1)}(x_{i-1,0},x_{i+1,j})$, since $(y_{i},x_{i+1,j},x_{i+2,0},x_{i+3,0},\ldots,x_{i-1,0})$ is a circuit with distinct vertices $y_{i},x_{i+3,0}$, we get $y_i\in P_{(2,g-2),(g-1,1)}(x_{i-2,0},x_{i-1,0})$, and so $P_{(1,g-1),(1,g-1)}(x_{i-1,0},x_{i+1,j})\subseteq P_{(2,g-2),(g-1,1)}(x_{i-2,0},x_{i-1,0})$ for $0\leq j\leq l-1$. The fact $p_{(2,g-2),(g-1,1)}^{(1,g-1)}=l$ implies that $P_{(1,g-1),(1,g-1)}(x_{i-1,0},x_{i+1,j})=P_{(2,g-2),(g-1,1)}(x_{i-2,0},x_{i-1,0})$. By \eqref{eq2}, one has $(x_{i,j'},x_{i+1,j})\in\Gamma_{1,g-1}$, and so
\begin{align}
P_{(1,g-1),(1,g-1)}(x_{0,0},x_{2,j})&=\{x_{1,j''}\mid0\leq j''\leq l-1\},\label{jb-1.2-eq-1}\\
P_{(2,g-2),(g-1,1)}(x_{0,j},x_{1,j'})&=\{x_{2,j''}\mid0\leq j''\leq l-1\}\label{jb-1.2-eq-2}
\end{align}
for $0\leq j,j'\leq l-1$.

Let $(x_{0,0},x_{1,l},\ldots,x_{g-1,l})$ be a circuit with $x_{1,l}\notin P_{(1,g-1),(1,g-1)}(x_{0,0},x_{2,0})$. By \eqref{jb-1.2-eq-1}, one has $x_{2,l}\notin \{x_{2,j}\mid0\leq j\leq l-1\}$. For each $y\in P_{(1,g-1),(1,g-1)}(x_{0,0},x_{2,l})$, since $(x_{0,0},y,x_{2,l},x_{3,l},\ldots,x_{g-1,l})$ is a circuit, we get $x_{2,l}\in P_{(2,g-2),(g-1,1)}(x_{0,0},y)$. In view of \eqref{jb-1.2-eq-2}, one obtains $P_{(1,g-1),(1,g-1)}(x_{0,0},x_{2,l})\cap P_{(1,g-1),(1,g-1)}(x_{0,0},x_{2,0})=\emptyset$. Repeating this process, we have $l\mid k_{1,g-1}$, which implies $a=2$ and $k_{1,g-1}=4$, a contradiction.
\end{proof}

The commutativity of $\Gamma$ will be used frequently in the sequel, so we no longer refer to it for the sake of simplicity.

\begin{lemma}\label{jb-1.1}
If $2p_{(1,2),(1,2)}^{(2,1)}>k_{1,2}$, then $p_{(1,2),(2,1)}^{\wz{h}}\geq2p_{(1,2),(1,2)}^{(2,1)}-k_{1,2}$ for each $\Gamma_{\wz{h}}\in\Gamma_{1,2}\Gamma_{2,1}$.
\end{lemma}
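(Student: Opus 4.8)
The plan is to exploit a counting argument on the set $P_{(1,2),(1,2)}(x,y)$ when $(x,y)\in\Gamma_{1,2}^2$, together with the inclusion--exclusion idea already used in the proof of Lemma \ref{jb-1.2}. Fix vertices $x,y$ with $\wz{\partial}(x,y)=\wz{h}$ for some $\Gamma_{\wz{h}}\in\Gamma_{1,2}\Gamma_{2,1}$, so there is a vertex $z$ with $(x,z)\in\Gamma_{1,2}$ and $(z,y)\in\Gamma_{2,1}$, i.e. $(y,z)\in\Gamma_{1,2}$. First I would look at the set $\Gamma_{1,2}(x)$ of out-neighbours of $x$ joined to $x$ by an arc (of the unique type $(1,2)$); this set has size $k_{1,2}$ and contains $z$. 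Similarly $\Gamma_{1,2}(y)$ has size $k_{1,2}$ and contains $z$. The key observation is that for each $w\in\Gamma_{1,2}(x)$, the pair $(w,z)$ satisfies $w\in\Gamma_{1,2}(x)$ and, since $z\in\Gamma_{1,2}(x)$ too, we have $\wz{\partial}(w,z)$ lying in a controlled set; in particular when $(w,z)\in\Gamma_{2,1}$, i.e. $w\in P_{(1,2),(1,2)}(x,z)$ after reading the second coordinate, $w$ is a common neighbour of the appropriate kind.

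The concrete step is this: count $|\Gamma_{1,2}(x)\cap\Gamma_{1,2}(y)|$ (common out-neighbours joined by arcs to both $x$ and $y$). On one hand, by the girth-$3$ / one-type-of-arcs structure, $w\in\Gamma_{1,2}(x)\cap\Gamma_{1,2}(y)$ forces $(x,y)$ and $w$ into a configuration that contributes to $p_{(1,2),(2,1)}^{\wz h}$ — namely $(x,w)\in\Gamma_{1,2}$ and $(w,y)\in\Gamma_{2,1}$ (using that from $w$ back to $y$ the distance is forced to be $2$ because $(y,w)\in\Gamma_{1,2}$ has $\partial(w,y)=2$). Hence $|\Gamma_{1,2}(x)\cap\Gamma_{1,2}(y)|\le p_{(1,2),(2,1)}^{\wz h}$. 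On the other hand, inside $\Gamma_{1,2}(z)$ — wait, more directly: apply inclusion--exclusion to $\Gamma_{1,2}(x)$ and $\Gamma_{1,2}(y)$ viewed as subsets of the set of out-neighbours of $z$'s in-neighbourhood, or better, use that both $\Gamma_{1,2}(x)$ and $\Gamma_{1,2}(y)$ are subsets of a common set of size controlled by $p_{(1,2),(1,2)}^{(2,1)}$. Specifically, $z\in\Gamma_{1,2}(x)$, so $\Gamma_{1,2}(x)\subseteq P_{(1,2),(1,2)}(\text{in-neighbour of }x\text{ through }z,\ \cdot)$-type set; the cleanest route is: every arc out of $x$ and every arc out of $y$ passes through the "lens" determined by the circuit $(\,\cdot\,,x,z)$ resp. $(\,\cdot\,,y,z)$ with common end $z$, and the hypothesis $2p_{(1,2),(1,2)}^{(2,1)}>k_{1,2}$ then forces $|\Gamma_{1,2}(x)\cap\Gamma_{1,2}(y)|\ge 2p_{(1,2),(1,2)}^{(2,1)}-k_{1,2}$ by pigeonhole inside a set of size $k_{1,2}$.

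Combining the two bounds gives $p_{(1,2),(2,1)}^{\wz h}\ge |\Gamma_{1,2}(x)\cap\Gamma_{1,2}(y)|\ge 2p_{(1,2),(1,2)}^{(2,1)}-k_{1,2}$, which is exactly the claim. I expect the main obstacle to be pinning down precisely which ambient set of size $k_{1,2}$ contains both $\Gamma_{1,2}(x)$ and $\Gamma_{1,2}(y)$ so that the pigeonhole step is legitimate — i.e. showing that the out-neighbours of $x$ joined to $x$ by an arc, and likewise for $y$, both sit inside a single $P_{(1,g-1),(1,g-1)}(\cdot,\cdot)$-type set of cardinality $p_{(1,2),(1,2)}^{(2,1)}$ is false as stated, so instead one must argue that $\Gamma_{1,2}(x)$ and $\Gamma_{1,2}(y)$ each meet the set $P_{(2,1),(2,1)}(x,z)\cup\{z\}$, or run the count through the vertex $z$ directly: $|\Gamma_{1,2}(x)|+|\Gamma_{1,2}(y)|-|\Gamma_{1,2}(x)\cup\Gamma_{1,2}(y)|=|\Gamma_{1,2}(x)\cap\Gamma_{1,2}(y)|$, and bound $|\Gamma_{1,2}(x)\cup\Gamma_{1,2}(y)|$ from above by $k_{1,2}$ using that both live among the out-neighbours that reach $z$ in $\le 1$ step. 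Making this ambient-set argument airtight — and verifying the forced distances $(x,w)\in\Gamma_{1,2}$, $(w,y)\in\Gamma_{2,1}$ for $w$ in the intersection, which relies on $g=3$ and the single-arc-type assumption — is the crux; the rest is bookkeeping with Lemma \ref{jb}.
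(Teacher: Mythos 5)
Your identification of $\Gamma_{1,2}(x)\cap\Gamma_{1,2}(y)$ with $P_{(1,2),(2,1)}(x,y)$ is fine, but the lower bound on this intersection is exactly where the argument breaks, and you have not repaired it. Both $\Gamma_{1,2}(x)$ and $\Gamma_{1,2}(y)$ already have cardinality $k_{1,2}$, so no ambient set of size $k_{1,2}$ can contain their union unless the two neighbourhoods coincide; in particular the suggestion to bound $|\Gamma_{1,2}(x)\cup\Gamma_{1,2}(y)|$ above by $k_{1,2}$ ``using that both live among the out-neighbours that reach $z$ in at most one step'' cannot work. Moreover the arithmetic does not match: intersecting two sets of size $k_{1,2}$ inside an ambient set of size $k_{1,2}$ would give a bound of $k_{1,2}$, not $2p_{(1,2),(1,2)}^{(2,1)}-k_{1,2}$. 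To get the stated bound the inclusion--exclusion must be applied to two sets of size $p_{(1,2),(1,2)}^{(2,1)}$ each, and your proposal never produces such sets.

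The missing idea is to route the count through an intermediate vertex at two-way distance $(2,1)$ from $x$, rather than through the arc witness $z$. Writing $(x,z)\in\Gamma_{\wz{h}}$ with $\wz{h}\neq(0,0)$, commutativity gives $p_{(2,1),(1,2)}^{\wz{h}}=p_{(1,2),(2,1)}^{\wz{h}}\neq0$, so one may pick $y\in P_{(2,1),(1,2)}(x,z)$. Then $P_{(1,2),(1,2)}(x,y)$ has size $l:=p_{(1,2),(1,2)}^{(2,1)}$, and $P_{(2,1),(2,1)}(y,z)$ has size $p_{(2,1),(2,1)}^{(1,2)}=l$ as well by Lemma \ref{jb} (ii). Both of these lenses are subsets of $\Gamma_{2,1}(y)$, which has size $k_{2,1}=k_{1,2}$; since by Lemma \ref{jb} (iii) only $k_{1,2}-l$ elements of $\Gamma_{2,1}(y)$ lie outside $P_{(2,1),(2,1)}(y,z)$, the two lenses meet in at least $2l-k_{1,2}$ vertices, and every vertex of the intersection lies in $P_{(1,2),(2,1)}(x,z)$. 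Without the intermediate vertex $y$ and the common ambient set $\Gamma_{2,1}(y)$, the pigeonhole step in your proposal has nothing to bite on, so as written the proof is incomplete.
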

\begin{proof}
Let $(x,z)\in\Gamma_{\wz{h}}$. Since $p_{(1,2),(2,1)}^{(0.0)}=k_{1,2}$, we may assume $\wz{h}\neq(0,0)$ and $x\neq z$. Pick a vertex $y\in P_{(2,1),(1,2)}(x,z)$. Suppose $P_{(1,2),(1,2)}(x,y)=\{w_i\mid 1\leq i\leq l\}$ with $l=p_{(1,2),(1,2)}^{(2,1)}$. In view of Lemma \ref{jb} (ii), one gets $p_{(1,2),(1,2)}^{(2,1)}k_{1,2}=p_{(2,1),(2,1)}^{(1,2)}k_{1,2}$, which implies that $p_{(2,1),(2,1)}^{(1,2)}=l$. Note that $P_{(1,2),(1,2)}(x,y)\cap P_{(2,1),(2,1)}(y,z)=P_{(1,2),(1,2)}(x,y)\setminus(\dot{\bigcup}_{\wz{i}\neq(2,1)}P_{(2,1),\wz{i}}(y,z)).$
Since $\sum_{\wz{i}\neq(2,1)}p_{(2,1),\wz{i}}^{(1,2)}=k_{1,2}-l$ from Lemma \ref{jb} (iii), we obtain $|P_{(1,2),(1,2)}(x,y)\cap P_{(2,1),(2,1)}(y,z)|\geq l-(k_{1,2}-l)=2l-k_{1,2}.$ By $P_{(1,2),(2,1)}(x,z)\supseteq P_{(1,2),(1,2)}(x,y)\cap P_{(2,1),(2,1)}(y,z)$, we get $p_{(1,2),(2,1)}^{\wz{h}}\geq2l-k_{1,2}$.
\end{proof}

\begin{lemma}\label{(1,2)^2}
Let $\Gamma_{1,2}^2\subseteq\{\Gamma_{2,1},\Gamma_{2,2},\Gamma_{2,3},\Gamma_{2,4}\}$. Then the following holds:
\begin{itemize}
\item[{\rm(i)}] If $\Gamma_{2,2}\in\Gamma_{1,2}^2$ and $(x,z)\in\Gamma_{2,2}$, then $P_{(1,2),(1,2)}(z,x)\subseteq P_{(2,2),(2,1)}(y,z)$ for all $y\in P_{(1,2),(1,2)}(x,z)$.

\item[{\rm (ii)}] If $\Gamma_{2,j}\in\Gamma_{1,2}^2$ and $(x',z')\in\Gamma_{2,j}$ with $j\in\{2,3\}$, then $P_{(1,2),(2,1)}(x',z')\subseteq P_{(2,2),(2,1)}(y',z')\cup P_{(2,3),(2,1)}(y',z')$ for all $y'\in P_{(1,2),(1,2)}(x',z')$;
\end{itemize}
\end{lemma}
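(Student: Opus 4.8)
\quad Both parts come down to one elementary fact implied by the hypothesis: if $u\to t\to v$ is a directed walk with $\wz{\partial}(u,t)=\wz{\partial}(t,v)=(1,2)$, then $\wz{\partial}(u,v)\in\Gamma_{1,2}^{2}\subseteq\{(2,1),(2,2),(2,3),(2,4)\}$, and in particular $\partial(u,v)=2$; conversely $(1,2)\notin\Gamma_{1,2}^{2}$. The plan is to unwind the definitions of the sets $P_{\wz{i},\wz{j}}$ to expose the relevant length-$2$ directed walks, read off the forward distances from this fact, and pin down the backward distances with the triangle inequality for $\partial$.

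For (i), I would fix $(x,z)\in\Gamma_{2,2}$, a vertex $y\in P_{(1,2),(1,2)}(x,z)$ and a vertex $w\in P_{(1,2),(1,2)}(z,x)$; unwinding the definitions yields arcs $x\to y\to z\to w\to x$, each of type $(1,2)$, i.e. $\wz{\partial}(x,y)=\wz{\partial}(y,z)=\wz{\partial}(z,w)=\wz{\partial}(w,x)=(1,2)$. From $\wz{\partial}(z,w)=(1,2)$ I get $w\in\Gamma_{1,2}(z)=\Gamma_{(2,1)^{*}}(z)$, so it remains only to show $\wz{\partial}(y,w)=(2,2)$: the walk $y\to z\to w$ forces $\partial(y,w)=2$ and the walk $w\to x\to y$ forces $\partial(w,y)=2$ by the observation above. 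Then $w\in\Gamma_{2,2}(y)\cap\Gamma_{(2,1)^{*}}(z)=P_{(2,2),(2,1)}(y,z)$, which is (i).

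For (ii), I would fix $(x',z')\in\Gamma_{2,j}$ with $j\in\{2,3\}$, a vertex $y'\in P_{(1,2),(1,2)}(x',z')$ and a vertex $w\in P_{(1,2),(2,1)}(x',z')$, so that $\wz{\partial}(x',y')=\wz{\partial}(y',z')=(1,2)$ and $\wz{\partial}(x',w)=\wz{\partial}(z',w)=(1,2)$. Again $w\in\Gamma_{1,2}(z')=\Gamma_{(2,1)^{*}}(z')$, so it suffices to show $\wz{\partial}(y',w)\in\{(2,2),(2,3)\}$. The walk $y'\to z'\to w$ gives $\wz{\partial}(y',w)\in\Gamma_{1,2}^{2}\subseteq\{(2,1),(2,2),(2,3),(2,4)\}$, and I would exclude the two extremes: if $\wz{\partial}(y',w)=(2,1)$ then $w\to y'$ is an arc, so $x'\to w\to y'$ would force $\wz{\partial}(x',y')\in\Gamma_{1,2}^{2}$, contradicting $\wz{\partial}(x',y')=(1,2)$; if $\wz{\partial}(y',w)=(2,4)$ then $\partial(w,y')=4$, whereas $\partial(w,x')=2$ (the second coordinate of $\wz{\partial}(x',w)$) and $\partial(x',y')=1$ give $\partial(w,y')\leq 3$, a contradiction. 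Hence $w\in(\Gamma_{2,2}(y')\cup\Gamma_{2,3}(y'))\cap\Gamma_{(2,1)^{*}}(z')=P_{(2,2),(2,1)}(y',z')\cup P_{(2,3),(2,1)}(y',z')$.

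I do not expect a genuine obstacle here: the whole argument is bookkeeping of two-way distances. The one spot that needs care is the case analysis in (ii) — keeping the orientations straight and ensuring that \emph{both} degenerate outcomes $(2,1)$ and $(2,4)$ are ruled out. It is exactly there that the full hypothesis $\Gamma_{1,2}^{2}\subseteq\{\Gamma_{2,1},\Gamma_{2,2},\Gamma_{2,3},\Gamma_{2,4}\}$ — not merely ``every element has first coordinate $2$'' — is used, through the triangle inequality fixing the second coordinate.
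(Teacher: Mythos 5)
Your proof is correct and follows essentially the same route as the paper's: both parts reduce to observing that the relevant length-two walks of $(1,2)$-arcs place the pair in $\Gamma_{1,2}^2$, whose members all have first coordinate $2$, with $\Gamma_{1,2}\notin\Gamma_{1,2}^2$ ruling out the $(2,1)$ case in (ii). You merely make explicit the triangle-inequality step $\partial(w,y')\leq\partial(w,x')+\partial(x',y')=3$ excluding $(2,4)$, which the paper leaves implicit.
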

\begin{proof}
(i)~For each $w\in P_{(1,2),(1,2)}(z,x)$, since $\Gamma_{1,2}\notin\Gamma_{1,2}^2$, we have $(y,w)\in\Gamma_{2,2}$, and so $P_{(1,2),(1,2)}(z,x)\subseteq\Gamma_{2,2}(y)$ with $y\in P_{(1,2),(1,2)}(x,z)$.

(ii)~For each $w'\in P_{(1,2),(2,1)}(x',z')$, since $\Gamma_{1,2}\notin\Gamma_{1,2}^2$, one gets $(y',w')\in\Gamma_{2,2}\cup\Gamma_{2,3}$, and so $P_{(1,2),(2,1)}(x',z')\subseteq\Gamma_{2,2}(y')\cup\Gamma_{2,3}(y')$ with $y'\in P_{(1,2),(1,2)}(x',z')$.
\end{proof}

\begin{prop}\label{construction}
The digraph ${\rm Cay}(\mathbb{Z}_{3}\times\mathbb{Z}_{k+1},\{(1,1),(1,2),\ldots,(1,k)\})$ is weakly distance-regular for $k\geq4$.
\end{prop}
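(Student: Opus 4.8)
The plan is to verify the defining property of a weakly distance-regular digraph by working explicitly inside the group $G=\mathbb{Z}_3\times\mathbb{Z}_{k+1}$. Write $S=\{(1,j)\mid 1\le j\le k\}$ for the connection set, so $\Gamma={\rm Cay}(G,S)$ is vertex-transitive and it suffices to compute, for the base vertex $0=(0,0)$, the two-way distance $\wz\partial(0,x)$ for every $x\in G$ together with the numbers of the form $\#\{z\in G\mid\wz\partial(0,z)=\wz i,\ \wz\partial(z,x)=\wz j\}$, and to check these depend only on $(\wz\partial(0,x),\wz i,\wz j)$. First I would determine the distance function. A directed path of length $m$ from $0$ ends at a vertex whose first coordinate is $m\bmod 3$ and whose second coordinate is a sum of $m$ terms each in $\{1,\dots,k\}$, i.e. any residue in $\{m,m+1,\dots,mk\}\pmod{k+1}$; since $k\ge 4$ one checks that for $m\ge 2$ this set of second coordinates is all of $\mathbb{Z}_{k+1}$ (the interval $[m,mk]$ has length $m(k-1)\ge k+1$ once $m\ge 2$, as $2(k-1)\ge k+1\iff k\ge 3$), while for $m=0$ it is $\{0\}$ and for $m=1$ it is $\{1,\dots,k\}=\mathbb{Z}_{k+1}\setminus\{0\}$. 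Hence $\partial(0,(a,b))$ equals: $0$ if $(a,b)=(0,0)$; the least $m\ge 1$ with $m\equiv a\pmod 3$ if $b\ne 0$ or $a\ne 0$, which is one of $1,2,3$ according to $a$ and whether $b=0$; more precisely $\partial=1$ iff $a=1$, $\partial=2$ iff $a=2$, and $\partial=3$ iff $a=0,b\ne 0$. So the girth is $3$, the out-neighbourhood of each vertex is a full "type $(1,2)$" arc set, and $\wz\partial(\Gamma)=\{(0,0),(1,2),(2,1),(2,2),(3,3)\}$ (with $k_{0,0}=1$, $k_{1,2}=k_{2,1}=k$, $k_{2,2}=k^2-k$ wait—recount: the vertices at $\partial=2$ are those with first coordinate $2$, of which there are $k+1$, and those split by back-distance).

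Next I would organize the verification around the quotient map $\pi:G\to\mathbb{Z}_3$ onto the first coordinate, which sends $\Gamma$ to the directed triangle ${\rm Cay}(\mathbb{Z}_3,\{1\})$. Since every relation $\Gamma_{\wz i}$ is a union of fibres of $\pi\times\pi$, the two-way distance of a pair is essentially governed by the first-coordinate difference, with the sole refinement coming from whether the second coordinate is $0$ when the first-coordinate difference is $0$. Concretely, I would tabulate the five classes: $(0,0)$ is the identity; $(1,2)$ consists of the $k$ elements $(1,*)$ with $*\ne 0$—no wait, all $k$ elements with first coordinate $1$ except... let me recount: elements with first coordinate $1$ number $k+1$; the arc $(0,0)\to(1,j)$ exists for $j=1,\dots,k$, and $(0,0)\to(1,0)$? that needs $(1,0)\in S$, which is false, so $(1,0)$ has $\partial(0,(1,0))=$ still $1$? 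No: a path of length $1$ reaches only $S$. A path of length $4$ reaches first coordinate $1$ and any second coordinate. So $\partial(0,(1,0))=4$. Thus the classes are finer than I first said, and $\wz\partial(\Gamma)$ has more elements; I would carefully list them (likely $(1,2),(2,1),(2,2),(4,4)$ or similar, plus $(0,0)$). This recomputation is the first concrete task, and I would do it by the interval argument above, being careful that a path of length $m$ reaches second coordinates exactly $\{m,\dots,mk\}\bmod(k+1)$.

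Having pinned down $\wz\partial(\Gamma)$ and the $k_{\wz i}$, I would compute each intersection number $p_{\wz i,\wz j}^{\wz h}$ directly. Because $\Gamma$ is a Cayley digraph, $p_{\wz i,\wz j}^{\wz h}=\#\{s\in G\mid \wz\partial(0,s)=\wz i,\ \wz\partial(s,h)=\wz j\}$ for any fixed $h$ in class $\wz h$, and $\wz\partial(s,h)=\wz\partial(0,h-s)$ by vertex-transitivity; so this is just a convolution count of how many ways $h$ splits as a sum of an element of class $\wz i$ and an element of class $\wz j$. Each class is a union of cosets of $0\times\mathbb{Z}_{k+1}$ minus at most one point (the point with zero second coordinate), so these counts reduce to counting lattice points in intervals mod $k+1$ and subtracting boundary corrections; every such count is manifestly independent of the choice of representative $h$ within its class. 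I would present this as a short finite table of the products $A_{\wz i}A_{\wz j}$. The main obstacle is purely bookkeeping: getting the exceptional "second coordinate $=0$" cases right, since those are what separate, e.g., $(1,2)$ from $(4,4)$, and ensuring the interval-mod-$(k+1)$ counts are uniform in the representative. Once the table is consistent, weak distance-regularity is immediate from the definition.
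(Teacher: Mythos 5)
Your overall strategy---compute $\wz{\partial}((0,0),\cdot)$ explicitly and then verify constancy of the intersection numbers by a convolution count in the group $\mathbb{Z}_3\times\mathbb{Z}_{k+1}$---is genuinely different from the paper's proof and is workable in principle. The paper instead proves the stronger statement that the digraph is weakly distance-\emph{transitive}: since an arc $(u,v)\to(u',w)$ exists iff $u'=u+1$ and $w\neq v$, any permutation of the second coordinate applied uniformly to all vertices is an automorphism fixing $(0,0)$, and a single transposition then carries $(a,b)$ to $(a,y)$ whenever the two vertices lie in the same class. That route avoids all intersection-number bookkeeping. Your route would also succeed, precisely because each class is either the singleton $\{(a,0)\}$ or a full coset $\{a\}\times\mathbb{Z}_{k+1}$ minus its zero-second-coordinate point, so each count $\#\{s\in C_{\wz{i}}\mid h-s\in C_{\wz{j}}\}$ depends only on the first coordinates involved and on whether the second coordinate of $h$ vanishes---data determined by the class of $h$.

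However, as written the proposal has a real gap: the determination of $\wz{\partial}(\Gamma)$, which you correctly identify as the first concrete task, is never completed, and both of your tentative answers are wrong. You first assert $\wz{\partial}(\Gamma)=\{(0,0),(1,2),(2,1),(2,2),(3,3)\}$, then, after correctly noticing that $(1,0)$ and $(2,0)$ behave exceptionally, guess ``likely $(1,2),(2,1),(2,2),(4,4)$ or similar.'' The correct answer is $\wz{\partial}(\Gamma)=\{(0,0),(1,2),(2,1),(3,3),(2,4),(4,2)\}$: the classes $(2,2)$ and $(4,4)$ do not occur, and the exceptional points $(2,0)$ and $(1,0)$ form the asymmetric classes $(2,4)$ and $(4,2)$ (e.g.\ $(2,0)$ is reached in two steps since $s_1+s_2\equiv 0$ is solvable with $s_i\neq 0$, but the return trip needs length $\equiv 1\pmod 3$ and length $1$ is impossible, forcing length $4$). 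Likewise the ``short finite table'' of products $A_{\wz{i}}A_{\wz{j}}$, which is the entire substance of the verification in your approach, is deferred rather than produced. To turn this into a proof you would need to state the six classes correctly and then actually exhibit the uniform counts (for instance, for two punctured-coset classes the count is $k-1$ or $k$ according to whether the second coordinate of $h$ is nonzero or zero, provided the first coordinates match, and $0$ otherwise); alternatively, the paper's automorphism argument gets you there in a few lines.
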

\begin{proof}
We will show that ${\rm Cay}(\mathbb{Z}_{3}\times\mathbb{Z}_{k+1},\{(1,1),(1,2),\ldots,(1,k)\})$ is weakly distance-transitive. For any vertex $(a,b)$ distinct with $(0,0)$,
we have
\begin{align}\label{distance}
\wz{\partial}((0,0),(a,b))=\left\{
\begin{array}{ll}
(1,2), & \textrm{if}\ a=1,\ b\neq0;\\
(2,1), & \textrm{if}\ a=2,\ b\neq0;\\
(3,3), & \textrm{if}\ a=0,\ b\neq0;\\
(2,4), & \textrm{if}\ a=2,\ b=0;\\
(4,2), & \textrm{if}\ a=1,\ b=0.
\end{array} \right.
\end{align}
Let $(a,b)$ and $(x,y)$ be two vertices satisfying
$\wz{\partial}((0,0),(a,b))=\wz{\partial}((0,0),(x,y))$. It suffices to verify that there exists an automorphism $\sigma$ of the digraph ${\rm Cay}(\mathbb{Z}_{3}\times\mathbb{Z}_{k+1},\{(1,1),(1,2),\ldots,(1,k)\})$ such that
$\sigma(0,0)=(0,0)$ and $\sigma(a,b)=(x,y)$. If $(a,b)=(x,y)$, then the identity permutation is a desired automorphism. Suppose $(a,b)\neq(x,y)$. By \eqref{distance}, one has $b\neq0$, $y\neq0$ and $a=x$. Let $\sigma$ be the permutation on the vertex set of the digraph ${\rm Cay}(\mathbb{Z}_{3}\times\mathbb{Z}_{k+1},\{(1,1),(1,2),\ldots,(1,k)\})$ such that
\begin{align}
\sigma(u,v)=\left\{
\begin{array}{ll}
(u,v), & \textrm{if}\ v\notin\{b,y\};\\
(u,y), & \textrm{if}\ v=b;\\
(u,b), & \textrm{if}\ v=y.
\end{array} \right.\nonumber
\end{align}
Routinely,  $\sigma$ is a desired automorphism.
\end{proof}

\section{$p_{(1,g-1),(1,g-1)}^{(2,g-2)}=k-1$}

In this section, we prove Theorem \ref{Main3} under the assumption that $p_{(1,g-1),(1,g-1)}^{(2,g-2)}=k-1$. Since $k>3$, from Lemma \ref{jb-1.2}, we have $g=3$. Now we prove it step by step.

\begin{step}\label{i=2}
$A_{1,2}^2=(k-1)A_{2,1}+p_{(1,2),(1,2)}^{(2,j)}A_{2,j}$ with $p_{(2,1),(2,1)}^{(1,2)}=k-1$ and $p_{(2,j),(2,1)}^{(1,2)}=1$.
\end{step}

In view of Lemma \ref{jb} (i) and (v), one gets $A_{1,2}^2=(k-1)A_{2,1}+p_{(1,2),(1,2)}^{(i,j)}A_{i,j}$ with $(i,j)\neq(2,1)$. By setting $\wz{d}=\wz{e}=\wz{f}^*=(1,2)$ in Lemma \ref{jb} (ii), we have $p_{(2,1),(2,1)}^{(1,2)}=k-1$. By setting $\wz{d}=\wz{e}=(1,2)$ in Lemma \ref{jb} (i), one gets $p_{(1,2),(1,2)}^{(i,j)}k_{i,j}=k$. Setting $\wz{d}=\wz{e}=(1,2)$ and $\wz{f}=(i,j)$ in Lemma \ref{jb} (ii), we obtain $p_{(i,j),(2,1)}^{(1,2)}=1$.

It suffices to show that $i=2$. Suppose not. Then $(i,j)=(1,2)$. In view of Lemma \ref{jb} (ii) and Lemma \ref{jb-1.1}, we have $p_{(1,2),(2,1)}^{(1,2)}=p_{(1,2),(2,1)}^{(2,1)}\geq k-2$. By setting $\wz{d}=\wz{e}^{*}=(1,2)$ in Lemma \ref{jb} (i), one gets $k^2\geq k+2p_{(1,2),(2,1)}^{(1,2)}k$, which implies $k\leq3$, contrary to the fact that $k>3$.

\begin{step}\label{bkb}
$A_{1,2}A_{2,1}=kI+p_{(1,2),(2,1)}^{(3,3)}A_{3,3}$ with $p_{(1,2),(3,3)}^{(1,2)}=k-1$.
\end{step}

By Lemma \ref{jb} (i) and (iii), it suffices to show that $\Gamma_{1,2}\Gamma_{2,1}=\{\Gamma_{0,0},\Gamma_{3,3}\}$. Suppose not.  By Step \ref{i=2}, we obtain $\Gamma_{1,2}^2=\{\Gamma_{2,1},\Gamma_{2,j}\}$. Since $k>3$, from Lemma \ref{jb2}, we have $\Gamma_{1,2}\Gamma_{2,1}=\{\Gamma_{0,0},\Gamma_{2,j},\Gamma_{j,2}\}$ or $\Gamma_{1,2}\Gamma_{2,1}=\{\Gamma_{0,0},\Gamma_{2,j},\Gamma_{j,2},\Gamma_{3,3}\}$ with $j\in\{2,3\}$. By Lemma \ref{jb-1.1}, we get $p_{(1,2),(2,1)}^{(2,j)}\geq k-2$.  Lemma \ref{(1,2)^2} (ii) implies $p_{(2,j),(2,1)}^{(1,2)}\geq p_{(1,2),(2,1)}^{(2,j)}\geq 2$, contrary to Step \ref{i=2}.

\begin{step}\label{3.2}
$p_{(1,2),(1,2)}^{(2,j)}>1$.
\end{step}

Assume the contrary, namely, $p_{(1,2),(1,2)}^{(2,j)}=1$. By Step \ref{i=2} and Lemma \ref{jb} (i), one has $k_{2,j}=k$. In view of Steps \ref{i=2}, \ref{bkb} and Lemma \ref{jb} (iv), we get $p_{(1,2),(1,2)}^{(2,1)}p_{(2,1),(2,1)}^{(1,2)}+p_{(1,2),(1,2)}^{(2,j)}p_{(2,1),(2,j)}^{(1,2)}=k+p_{(2,1),(1,2)}^{(3,3)}p_{(3,3),(1,2)}^{(1,2)}$, which implies $p_{(1,2),(2,1)}^{(3,3)}=k-2$. By setting $\wz{d}=(1,2)$ and $\wz{e}=(2,1)$ in Lemma \ref{jb} (i), we obtain $k_{3,3}=(k-1)k/(k-2)$. Since $k>3$ and ${\rm gcd}(k-1,k-2)=1$, one has $k=4$ and $k_{3,3}=6$.

We claim that $(x_0,x_3)\notin\Gamma_{3,3}$ for each path $(x_0,x_1,x_2,x_3)$ with $(x_0,x_2),(x_1,x_3)\in\Gamma_{2,j}$. Suppose not. By Step \ref{i=2}, we have $p_{(2,1),(2,1)}^{(1,2)}=3$. Since $p_{(1,2),(2,1)}^{(3,3)}=2$, there exists a vertex $x\in P_{(1,2),(2,1)}(x_0,x_3)$ such that $(x_2,x)\in\Gamma_{2,1}$, which implies $x_1=x$ since $p_{(1,2),(1,2)}^{(2,j)}=1$. Then $(x_1,x_2,x_3)$ is a circuit, contrary to the fact that $(x_1,x_3)\in\Gamma_{2,j}$. Thus, our claim is valid.

Let $(y_0,y_1,y_2,y_3,y_4)$ be a path with $(y_i,y_{i+2})\in\Gamma_{2,j}$ for $0\leq i\leq 2$. By the claim, we may assume $(y_0,y_3)\in\Gamma_{a,b}$ with $(a,b)\neq(3,3)$. For each $y_3'\in\Gamma_{1,2}(y_2)\setminus\{y_3\}$, since $p_{(2,1),(2,1)}^{(1,2)}=3$ from Step \ref{i=2}, one has $(y_1,y_3')\in\Gamma_{2,1}$, which implies $(y_0,y_3')\in\Gamma_{3,3}$ by Step \ref{bkb}. It follows that $\Gamma_{2,j}\Gamma_{1,2}=\{\Gamma_{3,3},\Gamma_{a,b}\}$, $p_{(a,b),(2,1)}^{(2,j)}=1$ and $p_{(3,3),(2,1)}^{(2,j)}=3$. 

In view of Lemma \ref{jb} (ii), we have $p_{(2,j),(1,2)}^{(a,b)}k_{a,b}=p_{(a,b),(2,1)}^{(2,j)}k_{2,j}=4$. By setting $\wz{d}=(2,j)$ and $\wz{e}=(1,2)$ in Lemma \ref{jb} (i), one gets $p_{(2,j),(1,2)}^{(3,3)}=2$. Step \ref{bkb} and Lemma \ref{jb} (iv) imply $p_{(1,2),(2,j)}^{(3,3)}p_{(2,1),(3,3)}^{(2,j)}+p_{(1,2),(2,j)}^{(a,b)}p_{(2,1),(a,b)}^{(2,j)}=k+p_{(2,1),(1,2)}^{(3,3)}p_{(3,3),(2,j)}^{(2,j)}$.
Substituting $p_{(1,2),(2,j)}^{(3,3)},p_{(2,1),(3,3)}^{(2,j)},p_{(2,1),(a,b)}^{(2,j)},p_{(2,1),(1,2)}^{(3,3)}$ into above equation, we obtain $2p_{(3,3),(2,j)}^{(2,j)}=2+p_{(1,2),(2,j)}^{(a,b)}$. Since $p_{(2,j),(1,2)}^{(a,b)}k_{a,b}=4$, we get $p_{(1,2),(2,j)}^{(a,b)}=2$ or $4$.

Since $p_{(2,j),(1,2)}^{(a,b)}=p_{(1,2),(2,j)}^{(a,b)}$, there exists a path $(y_0,y_1',y_2',y_3)$ such that $(y_0,y_2')\in\Gamma_{2,j}$ with $y_2\neq y_2'$. The fact $p_{(2,1),(2,1)}^{(1,2)}=3$ implies $(y_2',y_4)\in\Gamma_{2,1}$. Since $(y_0,y_3)\in\Gamma_{a,b}$ with $(a,b)\neq(3,3)$, from Step \ref{bkb}, we have $y_4\notin P_{(1,3),(3,1)}(y_0,y_3)$, and so $y_4\neq y_1'$, which implies  $(y_4,y_1')\in\Gamma_{3,3}$. By $p_{(1,2),(2,1)}^{(3,3)}=2$, there exists a vertex $y_5\in P_{(1,2),(2,1)}(y_4,y_1')$ such that $(y_5,y_0)\in\Gamma_{1,2}$.

Since $(y_0,y_3)\in\Gamma_{a,b}$ with $(a,b)\neq(3,3)$, from Step \ref{bkb}, we have $(y_3,y_5),(y_4,y_0)\notin\Gamma_{2,1}$. By Step \ref{i=2}, we get $(y_3,y_5),(y_4,y_0)\in\Gamma_{2,j}$. The claim implies $(y_3,y_0)\in\Gamma_{a,b}$, and so $a=b=2$. Then $j=2$. It follows that there exists $y\in P_{(1,2),(1,2)}(y_0,y_3)$ with $y\neq y_2$. Since $p_{(2,1),(2,1)}^{(1,2)}=3$, one has $(y,y_4)\in\Gamma_{2,1}$. The fact $y\in P_{(1,2),(2,1)}(y_4,y_0)$ implies $\Gamma_{2,2}\in\Gamma_{1,2}\Gamma_{2,1}$, contrary to Step \ref{bkb}.

\begin{step}\label{3.3}
$k_{2,j}=1$, $p_{(1,2),(2,1)}^{(3,3)}=k-1$ and $p_{(1,2),(1,2)}^{(2,j)}=k_{3,3}=k$.
\end{step}

In view of Steps \ref{i=2}, \ref{bkb} and Lemma \ref{jb} (iv), one obtains $p_{(1,2),(1,2)}^{(2,1)}p_{(2,1),(2,1)}^{(1,2)}+p_{(1,2),(1,2)}^{(2,j)}p_{(2,1),(2,j)}^{(1,2)}= k+p_{(2,1),(1,2)}^{(3,3)}p_{(3,3),(1,2)}^{(1,2)}$, which implies that  $p_{(1,2),(1,2)}^{(2,j)}=(k-1)p_{(1,2),(2,1)}^{(3,3)}-k^2+3k-1$. Since $p_{(1,2),(1,2)}^{(2,j)}\leq k$, from Lemma \ref{jb-1.1} and Step \ref{3.2}, one has $p_{(1,2),(2,1)}^{(3,3)}=k-1$, and so $p_{(1,2),(1,2)}^{(2,j)}=k$. By Steps \ref{i=2}, \ref{bkb} and Lemma \ref{jb} (i), we obtain $k_{2,j}=1$ and $k_{3,3}=k$. This completes the proof of this step.

\begin{step}\label{digraph}
$\Gamma$ is isomorphic to the digraph in Theorem {\rm\ref{Main3} (iv)}.
\end{step}

Let $(x_{0,0},x_{1,1},x_{2,2},x_{0,0})$ be a circuit and $\Gamma_{1,2}(x_{h,h})=\{x_{h+1,l}\mid 0\leq l\leq k,~l\neq h\}$ for $0\leq h\leq 2$, where the first subscription of $x$ are read modulo $3$. Since $p_{(1,2),(1,2)}^{(2,1)}=k-1$, we may assume $P_{(1,2),(1,2)}(x_{0,0},x_{2,2})=\{x_{1,l}\mid 1\leq l\leq k,~l\neq2\}$, $P_{(1,2),(1,2)}(x_{1,1},x_{0,0})=\{x_{2,l}\mid 2\leq l\leq k\}$ and $P_{(1,2),(1,2)}(x_{2,2},x_{1,1})=\{x_{0,l}\mid 0\leq l\leq k,~l\neq1,2\}$.

Note that $x_{2,2}\notin P_{(2,1),(2,1)}(x_{0,0},x_{1,2})$ and $x_{2,2}\in P_{(1,2),(1,2)}(x_{1,1},x_{0,0})$. Since $p_{(1,2),(1,2)}^{(2,1)}=p_{(2,1),(2,1)}^{(1,2)}=k-1$ from Step \ref{i=2}, there exists $x_{2,1}\in P_{(2,1),(2,1)}(x_{0,0},x_{1,2})$ such that $x_{2,1}\notin P_{(1,2),(1,2)}(x_{1,1},x_{0,0})$. It follows that $\Gamma_{2,1}(x_{0,0})=\{x_{2,l}\mid1\leq l\leq k\}$.

Suppose $P_{(2,1),(2,1)}(x_{0,0},x_{1,s})=P_{(2,1),(2,1)}(x_{0,0},x_{1,t})$, where $s,t$ are two distinct integers with $1\leq s,t\leq k$. Since $p_{(2,1),(2,1)}^{(1,2)}=k-1$, there exists a vertex $x_{2,l}\in\Gamma_{2,1}(x_{0,0})$ such that $x_{2,l}\notin P_{(2,1),(2,1)}(x_{0,0},x_{1,s})$, which implies $P_{(1,2),(1,2)}(x_{0,0},x_{2,l})\subseteq\Gamma_{1,2}(x_{0,0})\setminus\{x_{1,s},x_{1,t}\}$, contrary to the fact that $p_{(1,2),(1,2)}^{(2,1)}=k-1$. Since $\Gamma_{2,1}(x_{0,0})=\{x_{2,l}\mid1\leq l\leq k\}$, we may assume $P_{(2,1),(2,1)}(x_{0,0},x_{1,l})=\{x_{2,s}\mid1\leq s\leq k,~s\neq l\}$ and $(x_{1,l},x_{2,l})\notin\Gamma_{1,2}$ for $1\leq l\leq k$.

Since $(x_{0,0},x_{2,0})\notin\Gamma_{2,1}$, from Step \ref{i=2}, one has $(x_{0,0},x_{2,0})\in\Gamma_{2,j}$. In view of Step \ref{3.3}, we get $p_{(1,2),(1,2)}^{(2,j)}=k$, which implies $(x_{1,l},x_{2,0})\in\Gamma_{1,2}$ for all $1\leq l\leq k$. By Step \ref{i=2}, there exists a vertex $x_{1,0}\in P_{(2,1),(2,j)}(x_{2,2},x_{0,0})$. Hence, $x_{2,h}\in\Gamma_{1,2}(x_{1,0})$ for $1\leq h\leq k$. Since $P_{(2,1),(2,1)}(x_{0,0},x_{1,l'})=\{x_{2,s}\mid1\leq s\leq k,~s\neq l'\}$ for $1\leq l'\leq k$, $(x_{1,l},x_{2,h})$ is an arc for $0\leq l,h\leq k$ with $l\neq h$. Similarly, $(x_{a,l},x_{a+1,h})\in\Gamma_{1,2}$ for all $a$ and $0\leq l,h\leq k$ with $h\neq l$.

By Proposition \ref{construction}, $\Gamma$ is isomorphic to the digraph in Theorem \ref{Main3} (iv).

\section{$p_{(1,g-1),(1,g-1)}^{(2,g-2)}=k-2$}

In this section, we prove Theorem \ref{Main3} under the assumption that $p_{(1,g-1),(1,g-1)}^{(2,g-2)}=k-2$.

\begin{lemma}\label{g=3}
The girth $g$ of $\Gamma$ is $3$.
\end{lemma}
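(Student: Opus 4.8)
The plan is to reduce to valency $k=4$ and then run a refined version of the argument in Lemma \ref{jb-1.2}. Since $p_{(1,g-1),(1,g-1)}^{(2,g-2)}=k-2$, whenever $k\geq5$ we have $p_{(1,g-1),(1,g-1)}^{(2,g-2)}=k-2>2$, so Lemma \ref{jb-1.2} (with $a=2$) already gives $g=3$. Thus it suffices to treat $k=4$, where $l:=p_{(1,g-1),(1,g-1)}^{(2,g-2)}=2$; assume for contradiction that $g>3$. Exactly as in the proof of Lemma \ref{jb-1.2}, fix a circuit $(x_{0,0},x_{1,0},\ldots,x_{g-1,0})$ (first subscripts modulo $g$) and write $P_{(1,g-1),(1,g-1)}(x_{i-1,0},x_{i+1,0})=\{x_{i,0},x_{i,1}\}$ for all $i$. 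Since $g>3$, each $(x_{i,1},x_{i+1,0},\ldots,x_{i-1,0})$ is a circuit, so $(x_{i-2,0},x_{i,1})\in\Gamma_{2,g-2}$ and hence $p_{(2,g-2),(g-1,1)}^{(1,g-1)}\geq2$. I would split into the two cases of Lemma \ref{jb-1.2} according to whether this number exceeds $2$ or equals $2$; the point of Section 5 is that for $k=4$ the generic argument in Lemma \ref{jb-1.2} is inconclusive in \emph{both} cases, and must be pushed further.

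If $p_{(2,g-2),(g-1,1)}^{(1,g-1)}>2$, then $p_{(2,g-2),(g-1,1)}^{(1,g-1)}\in\{3,4\}$ (it is bounded by $k_{g-1,1}=k$ via Lemma \ref{jb}(iii)), and Lemma \ref{jb}(ii) gives $2k_{2,g-2}=4\,p_{(2,g-2),(g-1,1)}^{(1,g-1)}$, so $k_{2,g-2}\in\{6,8\}$ — both admissible, unlike the situation in Lemma \ref{jb-1.2}, where the analogous step forced $k_{1,g-1}>4$. I would then use $16=k^2=\sum_{\wz f}p_{(1,g-1),(1,g-1)}^{\wz f}k_{\wz f}$ from Lemma \ref{jb}(i) together with the divisibility in Lemma \ref{jb}(v): since $p_{(1,g-1),(1,g-1)}^{(2,g-2)}k_{2,g-2}=2k_{2,g-2}\geq12$ there is room for almost nothing else, so $\Gamma_{1,g-1}^2$ has at most three members and is essentially determined. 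With $\Gamma_{1,g-1}^2$ in hand, I would repeat the path-chasing of Case 1 of Lemma \ref{jb-1.2}: pick $x\in P_{(g-1,1),(1,g-1)}(y,y')$, follow the out-neighbours of $y$ along a path realizing $\partial(y,y')=\partial(w,y')+2$, pin down $\Gamma_{g-1,1}\Gamma_{1,g-1}$, and derive a non-integral intersection number from Lemma \ref{jb}(iv) and commutativity. I expect this to close by the same mechanism as in Lemma \ref{jb-1.2}, merely with the concrete values $k=4$, $k_{2,g-2}\in\{6,8\}$.

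The case $p_{(2,g-2),(g-1,1)}^{(1,g-1)}=2$ is the main obstacle. Here Lemma \ref{jb}(ii) gives $k_{2,g-2}=4$, and the array construction of Case 2 of Lemma \ref{jb-1.2} produces vertices $x_{i,j}$ ($0\leq j\leq3$) with $\{x_{1,0},x_{1,1}\}$ and $\{x_{1,2},x_{1,3}\}$ partitioning $\Gamma_{1,g-1}(x_{0,0})$, with $\{x_{2,0},x_{2,1},x_{2,2},x_{2,3}\}=\Gamma_{2,g-2}(x_{0,0})$ (using $\partial(x_{0,0},x_{2,j})+\partial(x_{2,j},x_{0,0})\geq g$ to read off $\wz\partial(x_{0,0},x_{2,j})=(2,g-2)$ and $k_{2,g-2}=4$ to get equality of the sets), and with $(x_{i,j'},x_{i+1,j})\in\Gamma_{1,g-1}$ whenever $j,j'$ lie in a common pair. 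The generic conclusion $l\mid k$ of Lemma \ref{jb-1.2} is now vacuous, so I would instead extract finer incidence data: each $x_{2,j}$ has exactly $p_{(1,g-1),(1,g-1)}^{(2,g-2)}=2$ in-neighbours in $\Gamma_{1,g-1}(x_{0,0})$, hence there are exactly $8$ arcs from $\Gamma_{1,g-1}(x_{0,0})$ to $\Gamma_{2,g-2}(x_{0,0})$; since the array already supplies two out-neighbours in $\Gamma_{2,g-2}(x_{0,0})$ for every $x_{1,i}$, each $x_{1,i}$ has exactly two, and two more lying outside $\Gamma_{2,g-2}(x_{0,0})$. Tracking an outside out-neighbour $u$ of $x_{1,0}$ — using $\partial(x_{0,0},u)\leq2$, $\partial(u,x_{0,0})\geq g-2$, and $\wz\partial(x_{1,0},u)=(1,g-1)$ — should force either $u\in\Gamma_{1,g-1}(x_{0,0})$ (so $\Gamma_{1,g-1}\in\Gamma_{1,g-1}^2$) or $\wz\partial(x_{0,0},u)=(2,b)$ with $b\geq g-1$; in either case the enlarged $\Gamma_{1,g-1}^2$ collides with $16=\sum_{\wz f}p_{(1,g-1),(1,g-1)}^{\wz f}k_{\wz f}$ (with $2k_{2,g-2}=8$ already committed), or contradicts Lemma \ref{jb2} and Lemma \ref{jb}(iv) applied to $\Gamma_{1,g-1}\Gamma_{g-1,1}$. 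Making this dichotomy airtight — in particular ruling out the outside out-neighbours cleanly — is what I expect to be the technical heart of Section 5.
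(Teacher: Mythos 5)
Your reduction to $k=4$ via Lemma \ref{jb-1.2} is exactly how the paper begins, and your opening dichotomy on $p_{(2,g-2),(g-1,1)}^{(1,g-1)}$ is also the right first move. But from that point on the proposal is a plan rather than a proof, and the plan underestimates what is needed in both branches. In the branch $p_{(2,g-2),(g-1,1)}^{(1,g-1)}>2$ you say you expect the argument to ``close by the same mechanism as in Lemma \ref{jb-1.2}''; it cannot. The mechanism in Case 1 of that lemma hinged on $p_{(2,g-2),(g-1,1)}^{(1,g-1)}=k$, which forced $\Gamma_{1,g-1}^2=\{\Gamma_{2,g-2}\}$ and then a fractional intersection number. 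Here the relevant value is $3<k=4$ (the value $4$ must be excluded separately, which the paper does via its Step \ref{step1}), so $\Gamma_{1,g-1}^2$ has a second member $\Gamma_{i,j}$ and no quick contradiction is available. In fact the paper's argument in this branch does not produce a contradiction for general $g>3$ at all: it first proves only that $g=4$ (Step \ref{step4}) and then needs the entire remaining chain of Steps \ref{step5}--\ref{chain-3} --- pinning down $k_{2,2}=6$, $k_{i,j}=4$, $i=2$, the products $\Gamma_{2,2}\Gamma_{1,3}$ and $\Gamma_{2,j}\Gamma_{1,3}$, and the valencies $k_{\wz a}$, $k_{\wz b}$, $k_{4,4}$ --- before the final contradiction with $\Gamma_{1,3}\Gamma_{3,1}$. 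None of that is anticipated in your sketch.

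The branch $p_{(2,g-2),(g-1,1)}^{(1,g-1)}=2$ is likewise not closable by the ``finer incidence data'' you describe. The paper disposes of it (inside Steps \ref{step1} and \ref{step2}) by passing to quotient digraphs over closed subsets $F=\langle\Gamma_{l,l}\rangle$, resp.\ $F=\langle\Gamma_{\wz j}\rangle$, invoking the structure of valency-$2$ weakly distance-regular digraphs from the proof of \cite[Proposition 4.3]{KSW03} to identify $\Gamma/F$ with ${\rm Cay}(\mathbb{Z}_g\times\mathbb{Z}_g,\{(1,0),(0,1)\})$, and then exhibiting two pairs at two-way distance $(4,2g-4)$ admitting different numbers of $(2,g-2)$--$(2,g-2)$ paths. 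Your proposed dichotomy about the ``outside out-neighbours'' $u$ of $x_{1,0}$ is precisely the point you flag as not airtight, and it is where the argument breaks: nothing in the counts you list prevents $\wz\partial(x_{0,0},u)=(2,b)$ with $\Gamma_{2,b}$ a new relation of small valency, which is perfectly consistent with $\sum_{\wz f}p_{(1,g-1),(1,g-1)}^{\wz f}k_{\wz f}=16$ and with Lemma \ref{jb2}. So the proposal identifies the correct entry point but leaves essentially the whole body of Section 5 unestablished.
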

\begin{proof}
The proof is rather long, and we shall prove it in Section 5.
\end{proof}

By setting $\wz{d}=\wz{e}=(1,2)$ and $\wz{f}=(2,1)$ in Lemma \ref{jb} (ii), we have $p_{(2,1),(2,1)}^{(1,2)}=k-2$. Setting $\wz{d}=\wz{e}=\wz{h}=\wz{g}^*=(1,2)$ in Lemma \ref{jb} (iv), one gets
\begin{align}\label{4.1}
k^2-5k+4+\sum_{\wz{i}\neq(2,1)}p_{(1,2),(1,2)}^{\wz{i}}p_{(2,1),\wz{i}}^{(1,2)}=\sum_{\wz{j}\neq(0,0)}p_{(1,2),(2,1)}^{\wz{j}}p_{(1,2),\wz{j}}^{(1,2)}.
\end{align}

\begin{lemma}\label{g=3-1}
If $\Gamma_{1,2}\in\Gamma_{1,2}^2$, then $\Gamma$ is isomorphic to the digraph in Theorem {\rm\ref{Main3} (i)} or {\rm(ii)}.
\end{lemma}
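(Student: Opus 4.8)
Throughout, we are in the case $g=3$ (Lemma~\ref{g=3}) with $p_{(1,2),(1,2)}^{(2,1)}=k-2$ and $\Gamma_{1,2}\in\Gamma_{1,2}^{2}$. By Lemma~\ref{jb2}, the latter means $p_{(1,2),(1,2)}^{(1,2)}=p_{(1,2),(2,1)}^{(1,2)}=p_{(1,2),(2,1)}^{(2,1)}\neq0$; write $t$ for this common value. The plan is first to pin down the decomposition of $A_{1,2}^{2}$ and $A_{1,2}A_{2,1}$, then to show $|V\Gamma|$ is one of a short list of explicit integers, and finally to invoke the classification of small weakly distance-regular digraphs from \cite{AH} and \cite{AH26} to identify $\Gamma$.

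First I would bound the sets $\Gamma_{1,2}^{2}$ and $\Gamma_{1,2}\Gamma_{2,1}$. Since $k_{1,2}=k$ and by Lemma~\ref{jb}(vi) every element $\Gamma_{a,b}$ of $\Gamma_{1,2}^{2}$ satisfies $k_{a,b}\mid\text{something}$ controlled by $k$ via Lemma~\ref{jb}(v), and since $p_{(1,2),(1,2)}^{(2,1)}=k-2$ already uses up almost all of $k_{1,2}=k$ (Lemma~\ref{jb}(iii) gives $\sum_{\wz i}p_{(1,2),(1,2)}^{\wz i}=k$), there is very little room left: $\Gamma_{1,2}^{2}=\{\Gamma_{2,1},\Gamma_{1,2}\}$ together with at most one further class, and the residual intersection numbers $p_{(1,2),(1,2)}^{\wz i}$ for $\wz i\neq(2,1)$ sum to $2$. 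Combined with $t\geq 1$ from the hypothesis $\Gamma_{1,2}\in\Gamma_{1,2}^2$, this forces $t\in\{1,2\}$ and severely restricts the possibilities: either $\Gamma_{1,2}^{2}=\{\Gamma_{2,1},\Gamma_{1,2}\}$ with $t=2$, or $\Gamma_{1,2}^{2}=\{\Gamma_{2,1},\Gamma_{1,2},\Gamma_{a,b}\}$ with $t=1$ and $p_{(1,2),(1,2)}^{(a,b)}=1$. A parallel analysis using Lemma~\ref{jb}(ii)--(iii) applied to $A_{1,2}A_{2,1}$, together with Lemma~\ref{jb-1.1} (note $2(k-2)>k$ since $k>3$, so $p_{(1,2),(2,1)}^{\wz h}\geq k-4$ for every $\Gamma_{\wz h}\in\Gamma_{1,2}\Gamma_{2,1}$) and Lemma~\ref{(1,2)^2}, constrains $\Gamma_{1,2}\Gamma_{2,1}$ to a similarly short list; in particular $\{\Gamma_{0,0},\Gamma_{1,2},\Gamma_{2,1}\}\subseteq\Gamma_{1,2}\Gamma_{2,1}$ by Lemma~\ref{jb2}(iii), and one shows $|\Gamma_{1,2}\Gamma_{2,1}|\leq 4$.

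Next I would extract $|V\Gamma|=\sum_{\wz i}k_{\wz i}$. Having identified all classes $\wz i$ appearing in $\Gamma_{1,2}^{j}$ for small $j$ (these exhaust $\wz\partial(\Gamma)$ since $\Gamma$ is strongly connected, cf.\ the argument of Lemma~\ref{jb3}), each valency $k_{a,b}$ is determined by Lemma~\ref{jb}(i)--(ii) from the intersection numbers already computed: e.g.\ $p_{(1,2),(1,2)}^{(a,b)}k_{a,b}=k$ pins down $k_{a,b}$, and $k_{3,3}$ (which appears as the nonidentity class in $\Gamma_{1,2}\Gamma_{2,1}$) is recovered from $p_{(1,2),(2,1)}^{(3,3)}k_{3,3}=k(k-1)$ or similar. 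Running the Lemma~\ref{jb}(iv) identity \eqref{4.1} — which in this case reads $k^{2}-5k+4+\sum_{\wz i\neq(2,1)}p_{(1,2),(1,2)}^{\wz i}p_{(2,1),\wz i}^{(1,2)}=\sum_{\wz j\neq(0,0)}p_{(1,2),(2,1)}^{\wz j}p_{(1,2),\wz j}^{(1,2)}$ — against the two or three structural sub-cases yields a Diophantine constraint on $k$. Together with the divisibility conditions (Lemma~\ref{jb}(v): $\mathrm{lcm}(k,k)\mid p_{(1,2),(1,2)}^{\wz f}k_{\wz f}$ and its relatives), this should cut $k$ down to a finite set — I expect exactly $k\in\{4,5\}$ — and correspondingly $|V\Gamma|\in\{11,14\}$, matching ${\rm Cay}(\mathbb{Z}_{11},\{1,3,4,5,9\})$ and ${\rm Cay}(\mathbb{Z}_{14},\{1,2,4,8,9,11\})$.

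Finally, with $|V\Gamma|$, $k$, and the full intersection array in hand, I would check that the parameters coincide with those of the two asserted Cayley digraphs and then cite the computer classification of \cite{AH}, \cite{AH26} for weakly distance-regular digraphs on that many vertices to conclude that $\Gamma$ is isomorphic to one of them; verifying that both listed digraphs genuinely are commutative weakly distance-regular with $\Gamma_{1,2}\in\Gamma_{1,2}^2$ (an explicit, routine check on $\mathbb{Z}_{11}$ and $\mathbb{Z}_{14}$) completes the equivalence. The main obstacle I anticipate is the case analysis in the second paragraph: keeping the bookkeeping of $\Gamma_{1,2}^{2}$, $\Gamma_{1,2}\Gamma_{2,1}$, and the handful of residual intersection numbers under control while pushing every branch to a contradiction or to one of the two surviving parameter sets. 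The Lemma~\ref{jb}(iv) relations and the $\mathrm{lcm}$-divisibility of Lemma~\ref{jb}(v) are the tools that should close each branch, but there will be several branches and the algebra in each is delicate rather than deep.
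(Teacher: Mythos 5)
Your overall strategy --- split on whether $\Gamma_{1,2}^2$ contains a class beyond $\Gamma_{1,2}$ and $\Gamma_{2,1}$, pin down the decompositions of $A_{1,2}^2$ and $A_{1,2}A_{2,1}$ via Lemma~\ref{jb} and \eqref{4.1}, compute $|V\Gamma|$ by exhausting $\wz{\partial}(\Gamma)$, and finish by citing \cite{AH} --- is exactly the paper's. But two of your specific claims are wrong. First, the surviving valencies are $k=5$ and $k=6$, not $k\in\{4,5\}$: the target digraphs ${\rm Cay}(\mathbb{Z}_{11},\{1,3,4,5,9\})$ and ${\rm Cay}(\mathbb{Z}_{14},\{1,2,4,8,9,11\})$ have valency $5$ and $6$, and in the subcase $\Gamma_{1,2}^2=\{\Gamma_{1,2},\Gamma_{2,1}\}$ one gets $p_{(1,2),(1,2)}^{(1,2)}=p_{(1,2),(2,1)}^{(1,2)}=2$, whence Lemma~\ref{jb}~(iii) with $\wz{d}=\wz{f}=(1,2)$ forces $k\geq 1+2+2=5$ while Lemma~\ref{jb-1.1} gives $k\leq 6$. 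Second, your assertion that the residual intersection numbers $p_{(1,2),(1,2)}^{\wz{i}}$, $\wz{i}\neq(2,1)$, ``sum to $2$'' is not what Lemma~\ref{jb}~(i) and (v) give: the constraint is on the weighted sum $\sum_{\wz{i}\neq(2,1)}p_{(1,2),(1,2)}^{\wz{i}}k_{\wz{i}}=2k$ with each summand a multiple of $k$; in the three-class subcase this yields $p_{(1,2),(1,2)}^{(1,2)}=1$ and $p_{(1,2),(1,2)}^{(2,j)}k_{2,j}=k$, but it does not force $p_{(1,2),(1,2)}^{(2,j)}=1$.

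The more substantive gap is that the branch $\Gamma_{1,2}^2\supsetneq\{\Gamma_{1,2},\Gamma_{2,1}\}$ does not die by the numerical identities you list. In the paper it survives \eqref{4.1}, the divisibility of Lemma~\ref{jb}~(v), and all the counting, ending in the internally consistent parameter set $k=5$, $\Gamma_{1,2}\Gamma_{2,1}=\{\Gamma_{0,0},\Gamma_{1,2},\Gamma_{2,1},\Gamma_{2,2}\}$, $k_{2,2}=5$; killing it requires a genuinely combinatorial argument (tracking the out-neighbours of a fixed vertex to show $\Gamma_{2,2}\Gamma_{1,2}\subseteq\{\Gamma_{1,2},\Gamma_{2,1},\Gamma_{2,2}\}$, hence $\wz{\partial}(\Gamma)=\{(0,0),(1,2),(2,1),(2,2)\}$ and $|V\Gamma|=16$) followed by an appeal to Hanaki's table, which contains no such scheme. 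Your sketch correctly reserves ``compute $|V\Gamma|$ and cite the classification'' as the finishing move, so the plan is viable in outline, but the local analysis that makes the vertex count computable in this bad branch is the real content of the proof and is absent from your proposal.
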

\begin{proof}
In view of Lemma \ref{jb} (ii) and Lemma \ref{jb-1.1}, we get $p_{(1,2),(1,2)}^{(1,2)}=p_{(1,2),(2,1)}^{(1,2)}=p_{(1,2),(2,1)}^{(2,1)}\geq k-4$.

\textbf{Case 1.} $\Gamma_{1,2}^2\neq\{\Gamma_{1,2},\Gamma_{2,1}\}$.

By Lemma \ref{jb} (i) and (v), one has
\begin{align}
A_{1,2}^2=(k-2)A_{2,1}+A_{1,2}+p_{(1,2),(1,2)}^{(2,j)}A_{2,j}\label{eq-case-1}
\end{align}
with $p_{(1,2),(1,2)}^{(2,j)}k_{2,j}=k$ and $j>1$. Since $p_{(1,2),(1,2)}^{(1,2)}\geq k-4$, we get $k=4$ or $5$. Lemma \ref{jb} (ii) implies $p_{(2,1),(2,j)}^{(1,2)}k=p_{(1,2),(1,2)}^{(2,j)}k_{2,j}$, and so $p_{(2,1),(2,j)}^{(1,2)}=1$.

Let $(x,y,z,w)$ be a path such that $z,w\in\Gamma_{1,2}(x)$. Since $p_{(1,2),(2,1)}^{(1,2)}=p_{(1,2),(1,2)}^{(1,2)}=1$, from \eqref{eq-case-1}, we have $(y,w)\in\Gamma_{2,1}\cup\Gamma_{2,j}$. Suppose $(y,w)\in\Gamma_{2,1}$. Note that $z\in P_{(1,2),(2,1)}(x,y)$, $w\in P_{(1,2),(1,2)}(x,y)$ and $w\in P_{(1,2),(2,1)}(x,z)$. Again, since $p_{(1,2),(2,1)}^{(1,2)}=p_{(1,2),(1,2)}^{(1,2)}=1$, one gets $y,z,w\notin\Gamma_{1,2}(u)\cup\Gamma_{2,1}(u)$ for each $u\in\Gamma_{1,2}(x)\setminus\{y,z,w\}$. It follows from Lemma \ref{jb} (iii) that $k=\sum_{\wz{l}}p_{(1,2),\wz{l}}^{(1,2)}\geq p_{(1,2),(2,1)}^{(1,2)}+p_{(1,2),(1,2)}^{(1,2)}+p_{(1,2),(0,0)}^{(1,2)}+3$, contrary to the fact that $k\leq5$. Then $(y,w)\in\Gamma_{2,j}$.

Since $x\in P_{(2,1),(1,2)}(y,w)$, one has $\Gamma_{2,j}\in\Gamma_{1,2}\Gamma_{2,1}$. Suppose that $\Gamma_{1,2}\Gamma_{2,1}\neq\{\Gamma_{0,0},\Gamma_{1,2},\Gamma_{2,1},\Gamma_{2,j}\}$. Then $|\Gamma_{1,2}\Gamma_{2,1}|\geq5$. By Lemma \ref{jb} (i), (v) and (vi), we have $k=5$ and $A_{1,2}A_{2,1}=5I+A_{1,2}+A_{2,1}+p_{(1,2),(2,1)}^{(2,j)}A_{2,j}+p_{(1,2),(2,1)}^{\wz{i}}A_{\wz{i}}$ with $p_{(1,2),(2,1)}^{\wz{i}}k_{\wz{i}}=p_{(1,2),(2,1)}^{(2,j)}k_{2,j}=5$ and $\wz{i}\notin\{(1,2),(2,1),(2,j)\}$. It follows from Lemma \ref{jb} (ii) that $p_{(1,2),\wz{i}}^{(1,2)}=p_{(1,2),(2,j)}^{(1,2)}=1$. In view of \eqref{4.1} and \eqref{eq-case-1}, one has $p_{(1,2),(1,2)}^{(2,j)}+3=p_{(1,2),(2,1)}^{\wz{i}}+p_{(1,2),(2,1)}^{(2,j)},$ contrary to the fact that $p_{(1,2),(2,1)}^{\wz{i}},p_{(1,2),(2,1)}^{(2,j)},p_{(1,2),(1,2)}^{(2,j)}\in\{1,5\}$.

Note that $\Gamma_{1,2}\Gamma_{2,1}=\{\Gamma_{0,0},\Gamma_{1,2},\Gamma_{2,1},\Gamma_{2,j}\}$. Then $j=2$. By Lemma \ref{jb} (i), we have $A_{1,2}A_{2,1}=kI+A_{1,2}+A_{2,1}+p_{(1,2),(2,1)}^{(2,2)}A_{2,2}$ with $p_{(1,2),(2,1)}^{(2,2)}k_{2,2}=k^2-3k$. It follows from Lemma \ref{jb} (ii) that $p_{(1,2),(2,2)}^{(1,2)}=p_{(1,2),(2,1)}^{(2,2)}k_{2,2}/k=k-3$. By \eqref{4.1} and \eqref{eq-case-1}, we obtain $k^2-5k+3+p_{(1,2),(1,2)}^{(2,2)}=(k-3)p_{(1,2),(2,1)}^{(2,2)}$. If $k=4$, then $p_{(1,2),(1,2)}^{(2,2)}=1+p_{(1,2),(2,1)}^{(2,2)}$, contrary to the fact that $p_{(1,2),(2,1)}^{(2,2)}k_{2,2}=p_{(1,2),(1,2)}^{(2,2)}k_{2,2}=4$. Then $k=5$ and $3+p_{(1,2),(1,2)}^{(2,2)}=2p_{(1,2),(2,1)}^{(2,2)}$. Since $p_{(1,2),(1,2)}^{(2,2)}k_{2,2}=5$ and $p_{(1,2),(2,1)}^{(2,2)}k_{2,2}=10$, from Lemma \ref{jb} (iii), we get $p_{(1,2),(1,2)}^{(2,2)}=1$, $p_{(1,2),(2,1)}^{(2,2)}=2$ and $k_{2,2}=5$.

Pick a vertex $u_1\in P_{(1,2),(2,1)}(x,w)$. Note that $p_{(1,2),(1,2)}^{(1,2)}=p_{(1,2),(2,1)}^{(1,2)}=1$. Since $w\in P_{(1,2),(2,1)}(x,z)$ and $y\in P_{(1,2),(1,2)}(x,z)$, we have $(z,u_1)\notin\Gamma_{1,2}\cup\Gamma_{2,1}$, and so $(z,u_1)\in\Gamma_{2,2}$ from \eqref{eq-case-1}. The fact $\Gamma_{1,2}\Gamma_{2,1}=\{\Gamma_{0,0},\Gamma_{1,2},\Gamma_{2,1},\Gamma_{2,2}\}$ and $x\in P_{(2,1),(1,2)}(y,u_1)$ imply $(y,u_1)\in\Gamma_{1,2}\cup\Gamma_{2,1}\cup\Gamma_{2,2}$. Let $\Gamma_{1,2}(w)=\{u_1,u_2,u_3,u_4,u_5\}$. Since $p_{(2,2),(2,1)}^{(1,2)}=1$, one gets $(z,u_i)\in\Gamma_{1,2}\cup\Gamma_{2,1}$ for $2\leq i\leq 5$. By \eqref{eq-case-1}, we obtain $(y,u_i)\in\Gamma_{1,2}\cup\Gamma_{2,1}\cup\Gamma_{2,2}$ for $1\leq i\leq 5$. It follows that $\Gamma_{2,2}\Gamma_{1,2}\subseteq\{\Gamma_{1,2},\Gamma_{2,1},\Gamma_{2,2}\}$, and so $\Gamma_{1,2}^3\subseteq\{\Gamma_{0,0},\Gamma_{1,2},\Gamma_{2,1},\Gamma_{2,2}\}$. By Lemma \ref{jb3}, one has $\wz{\partial}(\Gamma)=\{(0,0),(1,2),(2,1),(2,2)\}$ with $|V\Gamma|=16$, contrary to \cite{AH}.

\textbf{Case 2.} $\Gamma_{1,2}^2=\{\Gamma_{1,2},\Gamma_{2,1}\}$.

By setting $\wz{d}=\wz{e}=(1,2)$ in Lemma \ref{jb} (i), we have $p_{(1,2),(1,2)}^{(1,2)}=2$. In view of Lemma \ref{jb} (ii), one gets $p_{(1,2),(2,1)}^{(1,2)}=p_{(1,2),(2,1)}^{(2,1)}=2$, which implies $k\leq 6$ from Lemma \ref{jb-1.1}. By setting $\wz{d}=\wz{f}=(1,2)$ in Lemma \ref{jb} (iii), we obtain $k=5$ or $6$.

Suppose $k=5$. By setting $\wz{d}=\wz{e}^*=(1,2)$ in Lemma \ref{jb} (i), one has $A_{1,2}A_{2,1}=5I+2A_{1,2}+2A_{2,1}$, and so $\Gamma_{1,2}^{3}\subseteq\{\Gamma_{0,0},\Gamma_{1,2},\Gamma_{2,1}\}$. Lemma \ref{jb3} implies $\wz{\partial}(\Gamma)=\{(0,0),(1,2),(2,1)\}$ with $|V\Gamma|=11$. In view of \cite{AH}, $\Gamma$ is isomorphic to the digraph in Theorem \ref{Main3} (i).

Suppose $k=6$. By Lemma \ref{jb} (v) and setting $\wz{d}=\wz{e}^*=(1,2)$ in Lemma \ref{jb} (i), we have $A_{1,2}A_{2,1}=6I+2A_{1,2}+2A_{2,1}+p_{(1,2),(2,1)}^{(3,3)}A_{3,3}$ with $p_{(1,2),(2,1)}^{(3,3)}k_{3,3}=6$. Hence, $\Gamma_{1,3}^3=\{\Gamma_{0,0},\Gamma_{1,2},\Gamma_{2,1},\Gamma_{3,3}\}$. In view of Lemma \ref{jb} (ii), we get $p_{(1,2),(2,1)}^{(3,3)}k_{3,3}=p_{(1,2),(3,3)}^{(1,2)}k$, and so $p_{(1,2),(3,3)}^{(1,2)}=1$. It follows from \eqref{4.1} and \eqref{eq-case-1} that $p_{(1,2),(2,1)}^{(3,3)}=6$, and so $k_{3,3}=1$. Then $\Gamma_{3,3}\Gamma_{1,2}=\{\Gamma_{2,1}\}$, and so $\Gamma_{1,2}^4=\{\Gamma_{0,0},\Gamma_{1,2},\Gamma_{2,1},\Gamma_{3,3}\}$. By Lemma \ref{jb3}, we obtain $\wz{\partial}(\Gamma)=\{(0,0),(1,2),(2,1),(3,3)\}$ with $|V\Gamma|=14$. It follows from \cite{AH} that $\Gamma$ is isomorphic to the digraph in Theorem \ref{Main3} (ii).
\end{proof}

In the following, we only need to consider the case that $\Gamma_{1,2}\notin\Gamma_{1,2}^2$.

\begin{lemma}\label{g=3-2}
If $\Gamma_{1,2}\notin\Gamma_{1,2}^2$, then $\Gamma_{1,2}\Gamma_{2,1}\neq\{\Gamma_{0,0},\Gamma_{3,3}\}$.
\end{lemma}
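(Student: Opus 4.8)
The plan is to argue by contradiction: assume $\Gamma_{1,2}\notin\Gamma_{1,2}^2$ and $\Gamma_{1,2}\Gamma_{2,1}=\{\Gamma_{0,0},\Gamma_{3,3}\}$, and derive a contradiction from the constraint $p_{(1,2),(1,2)}^{(2,1)}=k-2$ with $k>3$. The hypothesis $\Gamma_{1,2}\Gamma_{2,1}=\{\Gamma_{0,0},\Gamma_{3,3}\}$ together with Lemma~\ref{jb}(i),(iii) pins down $A_{1,2}A_{2,1}=kI+p_{(1,2),(2,1)}^{(3,3)}A_{3,3}$ and $p_{(1,2),(3,3)}^{(1,2)}=k-1$; combined with Lemma~\ref{jb}(ii) this gives $k_{3,3}=k\cdot p_{(1,2),(2,1)}^{(3,3)}/(k-1)$, so $k-1\mid p_{(1,2),(2,1)}^{(3,3)}$. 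Since $p_{(1,2),(2,1)}^{(3,3)}\le k_{1,2}=k$ by Lemma~\ref{jb}(iii), we get $p_{(1,2),(2,1)}^{(3,3)}\in\{k-1\}$ (the value $k$ forced by divisibility only if $k-1\mid k$, i.e. never for $k>2$, leaving the unique option $k-1$), hence $k_{3,3}=k$.

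Next I would analyze $A_{1,2}^2$. By Lemma~\ref{jb}(i),(v) and $\Gamma_{1,2}\notin\Gamma_{1,2}^2$, write $A_{1,2}^2=(k-2)A_{2,1}+\sum_{(i,j)}p_{(1,2),(1,2)}^{(i,j)}A_{i,j}$ over $(i,j)\ne(2,1),(1,2)$, with $p_{(2,1),(2,1)}^{(1,2)}=k-2$ from Lemma~\ref{jb}(ii). The key tool is the two-way-distance quadruple identity Lemma~\ref{jb}(iv) applied with $\wz{d}=\wz{e}=\wz{h}=\wz{g}^{*}=(1,2)$, which in the present situation (only $\Gamma_{0,0},\Gamma_{3,3}$ arising from $\Gamma_{1,2}\Gamma_{2,1}$) reduces the right-hand side essentially to $k+p_{(1,2),(2,1)}^{(3,3)}p_{(3,3),(1,2)}^{(1,2)}=k+(k-1)^2$. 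Comparing with the left-hand side $\sum_{\wz{i}\ne(2,1)}p_{(1,2),(1,2)}^{\wz{i}}p_{(2,1),\wz{i}}^{(1,2)}+(k-2)^2$ forces tight numerical constraints on the remaining pieces $\Gamma_{2,j}$ of $\Gamma_{1,2}^2$; I expect this to show $\Gamma_{1,2}^2=\{\Gamma_{2,1},\Gamma_{2,j}\}$ with $p_{(1,2),(1,2)}^{(2,j)}=2$, $k_{2,j}=k/2$, $j\in\{2,3,4\}$, or a similarly small menu. Then Lemma~\ref{(1,2)^2}(ii) (applicable since $\Gamma_{1,2}\notin\Gamma_{1,2}^2$ and $\Gamma_{1,2}^2\subseteq\{\Gamma_{2,1},\Gamma_{2,2},\Gamma_{2,3},\Gamma_{2,4}\}$) feeds an inequality $p_{(2,j),(2,1)}^{(1,2)}\ge p_{(1,2),(2,1)}^{(2,j)}$, which combined with Lemma~\ref{jb}(ii)--(iii) and the fact that every element of $\Gamma_{2,j}\Gamma_{2,1}$ must lie in $\Gamma_{1,2}\Gamma_{2,1}^2$-reachable pieces should overdetermine the scheme.

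The concrete contradiction I would aim at: having $\Gamma_{1,2}\Gamma_{2,1}=\{\Gamma_{0,0},\Gamma_{3,3}\}$ means that from any vertex $x$, the "return pairs" at two-way distance $(0,0)$ or $(3,3)$ exhaust what you reach by an out-step followed by an in-step; tracing a circuit $(x_0,x_1,x_2,x_0)$ and using $p_{(1,2),(1,2)}^{(2,1)}=k-2$ to locate $k-2$ of the $k$ out-neighbours of $x_0$ inside $P_{(1,2),(1,2)}(x_0,x_2)$, the two leftover out-neighbours $w$ satisfy $(x_1,w)\in\Gamma_{2,j}$; then $(x_0,w)$ via $x_1$ shows $\Gamma_{2,j}\in\Gamma_{1,2}\Gamma_{2,1}=\{\Gamma_{0,0},\Gamma_{3,3}\}$, forcing $j$-type $=3$ and $(x_0,w)\in\Gamma_{3,3}$ — but $w\in\Gamma_{1,2}(x_0)$ gives $\partial(x_0,w)=1\ne 3$, a contradiction. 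I expect this last "count the missing out-neighbours" argument to be the cleanest route and the main obstacle will be verifying that the two leftover out-neighbours cannot instead be absorbed as $(x_1,w)\in\Gamma_{2,1}$ (which is excluded because that would put $w$ back into a $P_{(1,2),(1,2)}$-set already of full size $k-2$, or would force $k\le 3$ via a valency count as in Step~\ref{i=2}) — so the real work is pinning down exactly which intersection numbers vanish, after which the geometric contradiction is immediate.
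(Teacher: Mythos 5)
Your proposal has two genuine breakdowns, and the case it tries to dismiss quickly is in fact the hard one. First, the opening numerology is based on an inverted use of Lemma \ref{jb}(ii): that identity gives $p_{(1,2),(2,1)}^{(3,3)}k_{3,3}=p_{(1,2),(3,3)}^{(1,2)}k=(k-1)k$, i.e.\ $k_{3,3}=(k-1)k/p_{(1,2),(2,1)}^{(3,3)}$, not $k_{3,3}=k\,p_{(1,2),(2,1)}^{(3,3)}/(k-1)$. The correct formula only yields $p_{(1,2),(2,1)}^{(3,3)}\mid (k-1)k$, so your conclusion $p_{(1,2),(2,1)}^{(3,3)}=k-1$ and $k_{3,3}=k$ does not follow; the paper instead pins $p_{(1,2),(2,1)}^{(3,3)}$ to $\{k-4,k-3,k-2\}$ via Lemma \ref{jb-1.1} and \eqref{4.1}, and encounters consistent-looking parameter sets such as $k=5$, $p_{(1,2),(2,1)}^{(3,3)}=2$, $k_{3,3}=10$ that cannot be excluded by counting alone.

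Second, the closing ``geometric contradiction'' does not exist. For the circuit $(x_0,x_1,x_2,x_0)$ and any out-neighbour $w$ of $x_0$ with $w\neq x_1$, the vertex $x_0$ lies in $P_{(2,1),(1,2)}(x_1,w)$, so the hypothesis $\Gamma_{2,1}\Gamma_{1,2}=\Gamma_{1,2}\Gamma_{2,1}=\{\Gamma_{0,0},\Gamma_{3,3}\}$ forces $(x_1,w)\in\Gamma_{3,3}$ --- which is perfectly consistent ($x_1\to x_2\to x_0\to w$ is a path of length $3$). There is no reason $(x_1,w)$ should lie in some $\Gamma_{2,j}$, and your final step conflates $(x_1,w)$ with $(x_0,w)$ when asserting $(x_0,w)\in\Gamma_{3,3}$; by construction $(x_0,w)\in\Gamma_{1,2}$. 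So the ``clean route'' collapses. The actual proof in the paper must work much harder: it splits on whether $\Gamma_{2,4}\in\Gamma_{1,2}^2$, further on the exact composition of $\Gamma_{1,2}^2$, and in each sub-case derives a contradiction by chasing vertices around circuits (using Lemma \ref{(1,2)^2}(i) and the fact $p_{(1,2),(1,2)}^{(2,i)}>1$ for some $i$), counting distinct intermediate vertices against $k$, or exhibiting a valency/divisibility clash such as $12\,p_{(2,4),(1,2)}^{(3,3)}\leq 8$. None of that structure is present in your sketch, so the proposal does not establish the lemma.
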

\begin{proof}
Assume the contrary, namely, $A_{1,2}A_{2,1}=kI+p_{(1,2),(2,1)}^{(3,3)}A_{3,3}$. It follows from Lemma \ref{jb} (iii) that $p_{(1,2),(3,3)}^{(1,2)}=k-1$. By Lemma \ref{jb-1.1}, we have $k-4\leq p_{(1,2),(2,1)}^{(3,3)}\leq k$. Lemma \ref{jb} (iii) and Lemma \ref{jb2} imply $\sum_{i>1}p_{(2,1),(2,i)}^{(1,2)}=2$. By \eqref{4.1}, we obtain $k^2-5k+4+2k\geq (k-1)p_{(1,2),(2,1)}^{(3,3)}$. Since $k>3$, one has $p_{(1,2),(2,1)}^{(3,3)}\in\{k-4,k-3,k-2\}$. If $p_{(1,2),(1,2)}^{(2,i)}=1$ for all $\Gamma_{2,i}\in\Gamma_{1,2}^2\setminus\{\Gamma_{2,1}\}$, from \eqref{4.1}, then $k^2-5k+4+2=(k-1)p_{(1,2),(2,1)}^{(3,3)}$, a contradiction. Then $p_{(1,2),(1,2)}^{(2,i)}>1$ for some $i>1$.

\textbf{Case 1.} $\Gamma_{2,4}\notin\Gamma_{1,2}^2$.

Let $(x_0,x_1,\ldots,x_{i+1})$ be a circuit with $(x_0,x_2)\in\Gamma_{2,i}$ and $i=\max\{a\mid\Gamma_{2,a}\in\Gamma_{1,2}^2\}$, where the subscription of $x$ could be read modulo $i+2$.

Suppose $\Gamma_{1,2}^2=\{\Gamma_{2,1},\Gamma_{2,2},\Gamma_{2,3}\}$. Then $i=3$. Since $\sum_{i>1}p_{(2,1),(2,i)}^{(1,2)}=2$, from Lemma \ref{(1,2)^2} (i), we have $p_{(2,1),(2,2)}^{(1,2)}=p_{(2,1),(2,3)}^{(1,2)}=p_{(1,2),(1,2)}^{(2,2)}=1$, and so $p_{(1,2),(1,2)}^{(2,3)}>1$. Since $p_{(1,2),(2,1)}^{(3,3)}\in\{k-2,k-3,k-4\}$, from \eqref{4.1}, one gets $p_{(1,2),(2,1)}^{(3,3)}=k-3$ and $p_{(1,2),(1,2)}^{(2,3)}=k-2$. By Lemma \ref{jb} (ii), we obtain $k_{2,3}p_{(1,2),(1,2)}^{(2,3)}=kp_{(2,1),(2,3)}^{(1,2)}=k$, which implies $k=4$. It follows that there exists $x_1'\in P_{(1,2),(1,2)}(x_0,x_2)$ with $x_1\neq x_1'$. The fact $\partial(x_3,x_0)\leq2$ and $\Gamma_{1,2}\Gamma_{2,1}=\{\Gamma_{0,0},\Gamma_{3,3}\}$ imply $(x_1,x_3),(x_1',x_3)\notin\Gamma_{2,1}$. Since $p_{(2,1),(2,2)}^{(1,2)}=p_{(2,1),(2,3)}^{(1,2)}=1$, we may assume $(x_{1},x_3)\in\Gamma_{2,3}$ and $(x_1',x_3)\in\Gamma_{2,2}$. The fact $p_{(1,2),(1,2)}^{(2,3)}=2$ implies that there exists $x_2'\in P_{(1,2),(1,2)}(x_1,x_3)$ with $x_2'\neq x_2$. Since  $\Gamma_{1,2}\notin\Gamma_{1,2}^2$, we obtain $x_4\notin P_{(1,2),(1,2)}(x_3,x_1')$, and so $(x_2,x_4)\notin\Gamma_{2,2}$ from Lemma \ref{(1,2)^2} (i). The fact $\Gamma_{1,2}\Gamma_{2,1}=\{\Gamma_{0,0},\Gamma_{3,3}\}$ implies $(x_2,x_4)\in\Gamma_{2,3}$ and $(x_2',x_4)\in\Gamma_{2,2}$. Then there exists $x_3'\in P_{(1,2),(1,2)}(x_2,x_4)$ with $x_3\neq x_3'$. The fact $\Gamma_{1,2}\notin\Gamma_{1,2}^2$ implies $x_0\notin P_{(1,2),(1,2)}(x_4,x_2')$. By Lemma \ref{(1,2)^2} (i), we get $(x_3,x_0)\notin\Gamma_{2,2}$. The fact $\Gamma_{1,2}\Gamma_{2,1}=\{\Gamma_{0,0},\Gamma_{3,3}\}$ and $p_{(2,1),(2,2)}^{(1,2)}=p_{(2,1),(2,3)}^{(1,2)}=1$ imply $(x_3,x_0)\in\Gamma_{2,3}$, $(x_3',x_0)\in\Gamma_{2,2}$, and $x_1$ or $x_1'\in\Gamma_{2,2}(x_4)$. By Lemma \ref{(1,2)^2} (i), one obtains $x_1$ or $x_1'\in P_{(1,2),(1,2)}(x_0,x_3')$, contrary to the fact that $\Gamma_{1,2}\notin\Gamma_{1,2}^2$.

Suppose $\Gamma_{1,2}^2=\{\Gamma_{2,1},\Gamma_{2,i}\}$ for some $i\in\{2,3\}$. Note that $p_{(2,1),(2,i)}^{(1,2)}=2$. By Lemma \ref{jb} (ii), we have $k_{2,i}p_{(1,2),(1,2)}^{(2,i)}=kp_{(2,1),(2,i)}^{(1,2)}=2k$, and so $p_{(1,2),(1,2)}^{(2,i)}\mid2k$. Since $p_{(1,2),(2,1)}^{(3,3)}\in\{k-2,k-3,k-4\}$ with $k>3$, from \eqref{4.1}, one gets $p_{(1,2),(1,2)}^{(2,i)}=2$ and $k=k_{2,i}=5$.  The fact $\Gamma_{1,2}\Gamma_{2,1}=\{\Gamma_{0,0},\Gamma_{3,3}\}$ implies $(x_{1},x_3)\in\Gamma_{2,i}$. Similarly, $(x_l,x_{l+2})\in\Gamma_{2,i}$ for all $l$. Let $x_l'\in P_{(1,2),(1,2)}(x_{l-1},x_{l+1})$ with $x_l\neq x_l'$. By similar argument, one has $x_{l+1},x_{l+1}'\in P_{(2,i),(2,1)}(x_{l-1},x_l)$. Since $p_{(1,2),(1,2)}^{(2,i)}=2$, there exists $x_{l+1}''\in P_{(1,2),(1,2)}(x_{l},x_{l+2}')$ with $x_{l+1}''\neq x_{l+1}$. By similar argument, we get $x_{l+1},x_{l+1}',x_{l+1}''\in P_{(2,i),(2,1)}(x_{l-1},x_l)$. The fact $p_{(2,1),(2,i)}^{(1,2)}=2$ implies $x_{l+1}'=x_{l+1}''$, and so $P_{(2,i),(2,1)}(x_0,x_1)=P_{(2,i),(2,1)}(x_0,x_1')=\{x_2,x_2'\}$.

Let $(x_0=y_0,y_1,\ldots,y_{i+1})$ be a circuit with $(y_0,y_2)\in\Gamma_{2,i}$ and $y_2\notin\{x_2,x_2'\}$. Note that $y_1\notin\{x_1,x_1'\}$. Similarly, there exist $y_1'\in P_{(1,2),(1,2)}(y_0,y_2)$ with $y_1'\neq y_1$ and $y_2'\in P_{(1,2),(1,2)}(y_1,y_3)$ with $y_2'\neq y_2$ such that $(y_1',y_2')$ is an arc and $P_{(2,i),(2,1)}(x_0,y_1)=P_{(2,i),(2,1)}(x_0,y_1')=\{y_2,y_2'\}$. It follows that $|\{x_1,x_1',y_1,y_1'\}|=4$. Note that $k_{2,i}=5$. Let $(x_0=z_0,z_1,\ldots,z_{i+1})$ be a circuit with $(z_0,z_2)\in\Gamma_{2,i}$ and $z_2\notin\{x_2,x_2',y_2,y_2'\}$. Similarly, there exists a vertex $z_1'\in P_{(1,2),(1,2)}(z_0,z_2)$ with $z_1'\neq z_1$ such that $z_2\in P_{(2,i),(2,1)}(x_0,z_1)=P_{(2,i),(2,1)}(x_0,z_1')$. It follows that $x_1,x_1',y_1,y_1',z_1,z_1'$ are pairwise distinct, contrary to the fact that $k=5$.

\textbf{Case 2.} $\Gamma_{2,4}\in\Gamma_{1,2}^2$.

Since $p_{(2,1),(2,1)}^{(1,2)}=k-2$, from Lemma \ref{jb} (iii), we get $\sum_{i>1}p_{(2,i),(2,1)}^{(1,2)}=2$. Let $(x,y,z)$ be a path such that $(x,z)\in\Gamma_{2,4}$.

Suppose $\Gamma_{1,2}^2=\{\Gamma_{2,1},\Gamma_{2,4}\}$. Note that $p_{(2,4),(2,1)}^{(1,2)}=2$. By Lemma \ref{jb} (ii), we have $k_{2,4}p_{(1,2),(1,2)}^{(2,4)}=kp_{(2,1),(2,4)}^{(1,2)}=2k$. Since $p_{(1,2),(3,3)}^{(1,2)}=k-1$ and $p_{(1,2),(2,1)}^{(3,3)}\in\{k-2,k-3,k-4\}$ with $k>3$, from \eqref{4.1}, one gets $p_{(1,2),(1,2)}^{(2,4)}=2$, $k=k_{2,4}=5$ and $p_{(1,2),(2,1)}^{(3,3)}=2$. By setting $\wz{d}=\wz{e}^*=(1,2)$ in Lemma \ref{jb} (i), we obtain $k_{3,3}=10$. Let $P_{(1,2),(1,2)}(x,z)=\{y,y'\}$ and $\Gamma_{1,2}(z)=\{w_0,w_1,w_2,w_3,w_4\}$. Since $p_{(2,1),(2,1)}^{(1,2)}=3$, we may assume $w_0,w_1,w_2\in\Gamma_{2,1}(y)$ and $w_0\in\Gamma_{2,1}(y)$. Note that $\Gamma_{1,2}\Gamma_{2,1}=\{\Gamma_{0,0},\Gamma_{3,3}\}$ and $p_{(2,1),(1,2)}^{(3,3)}=2$. Since $x,w_0\in P_{(2,1),(1,2)}(y,y')$, one has $w_1,w_2\notin\Gamma_{2,1}(y')$, which implies $w_3,w_4\in\Gamma_{2,1}(y')$. It follows that $(x,w_i)\in\Gamma_{3,3}$ for $0\leq i\leq 4$, and so $\Gamma_{2,4}\Gamma_{1,2}=\{\Gamma_{3,3}\}$. By Lemma \ref{jb} (i), we get $k_{2,4}k=p_{(2,4),(1,2)}^{(3,3)}k_{3,3}$, a contradiction.

Since $\sum_{i>1}p_{(2,i),(2,1)}^{(1,2)}=2$, we have $\Gamma_{1,2}^2=\{\Gamma_{2,1},\Gamma_{2,i},\Gamma_{2,4}\}$ and $p_{(2,i),(2,1)}^{(1,2)}=p_{(2,4),(2,1)}^{(1,2)}=1$ for some $i\in\{2,3\}$. Let $(x_0,x_1,\ldots,x_{i+1})$ be a circuit with $(x_0,x_2)\in\Gamma_{2,i}$. For each $x_1'\in P_{(1,2),(1,2)}(x_0,x_2)$, since $\Gamma_{1,2}\Gamma_{2,1}=\{\Gamma_{0,0},\Gamma_{3,3}\}$, one gets $x_1,x_1'\in P_{(2,1),(2,i)}(x_2,x_3)$, and so $x_1=x_1'$ from $p_{(2,i),(2,1)}^{(1,2)}=1$. It follows that $p_{(1,2),(1,2)}^{(2,i)}=1$. Since $p_{(1,2),(3,3)}^{(1,2)}=k-1$, from \eqref{4.1}, one gets $k^2-5k+5+p_{(1,2),(1,2)}^{(2,4)}=(k-1)p_{(1,2),(2,1)}^{(3,3)}$. Since $p_{(1,2),(2,1)}^{(3,3)}\in\{k-4,k-3,k-2\}$ and $p_{(1,2),(1,2)}^{(2,4)}k_{2,4}=p_{(2,4),(2,1)}^{(1,2)}k=k$ from Lemma \ref{jb} (ii), we obtain $k=4$, $k_{2,4}=2$, $p_{(1,2),(1,2)}^{(2,4)}=2$ and $p_{(1,2),(2,1)}^{(3,3)}=1$. By setting $\wz{d}=\wz{e}^*=(1,2)$ in Lemma \ref{jb} (i), we obtain $k_{3,3}=12$. For $w\in P_{(2,1),(2,1)}(y,z)$, since $\Gamma_{1,2}\Gamma_{2,1}=\{\Gamma_{0,0},\Gamma_{3,3}\}$, one has $(x,w)\in\Gamma_{3,3}$, which implies $p_{(3,3),(2,1)}^{(2,4)}\neq0$. Lemma \ref{jb} (ii) implies $12p_{(2,4),(1,2)}^{(3,3)}=p_{(2,4),(1,2)}^{(3,3)}k_{3,3}=p_{(3,3),(2,1)}^{(2,4)}k_{2,4}\leq 8$, a contradiction.
\end{proof}

Since $\Gamma_{1,2}\notin\Gamma_{1,2}^2$, from Lemmas \ref{jb2} and \ref{g=3-2}, one has $\Gamma_{2,i}\in\Gamma_{1,2}^2\cap\Gamma_{1,2}\Gamma_{2,1}$ for some $i\in\{2,3\}$. In view of Lemma \ref{(1,2)^2} (ii), we get $p_{(2,2),(2,1)}^{(1,2)}+p_{(2,3),(2,1)}^{(1,2)}\geq p_{(1,2),(2,1)}^{(2,i)}$. Since $p_{(2,1),(2,1)}^{(1,2)}=k-2$, from Lemma \ref{jb} (iii) and Lemma \ref{jb-1.1}, we obtain $2=\sum_{i>1}p_{(2,i),(2,1)}^{(1,2)}\geq p_{(1,2),(2,1)}^{(2,i)}\geq k-4$. Then $k\in\{4,5,6\}$.

\begin{lemma}\label{g=3-3}
If $\Gamma_{1,2}\notin\Gamma_{1,2}^2$, then $\Gamma_{2,2}\notin\Gamma_{1,2}\Gamma_{2,1}$.
\end{lemma}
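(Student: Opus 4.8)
The plan is to argue by contradiction, assuming $\Gamma_{2,2}\in\Gamma_{1,2}\Gamma_{2,1}$. Since $\Gamma_{1,2}\notin\Gamma_{1,2}^2$ and $\Gamma_{2,2}\in\Gamma_{1,2}\Gamma_{2,1}$, Lemma \ref{jb2} forces $\Gamma_{2,2}\in\Gamma_{1,2}^2$ (a diagonal-type element of $\Gamma_{1,2}\Gamma_{2,1}$ corresponds to an element of $\Gamma_{1,2}^2$ via Lemma \ref{jb} (ii)), so in fact $\Gamma_{2,2}\in\Gamma_{1,2}^2\cap\Gamma_{1,2}\Gamma_{2,1}$. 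First I would invoke the remark preceding this lemma: from $p_{(2,1),(2,1)}^{(1,2)}=k-2$ and Lemma \ref{jb} (iii) we have $\sum_{i>1}p_{(2,i),(2,1)}^{(1,2)}=2$, and Lemma \ref{(1,2)^2} (ii) together with Lemma \ref{jb-1.1} gives $2\geq p_{(2,2),(2,1)}^{(1,2)}+p_{(2,3),(2,1)}^{(1,2)}\geq p_{(1,2),(2,1)}^{(2,2)}\geq k-4$, whence $k\in\{4,5,6\}$. I would then split on the value of $k$ and on whether $\Gamma_{2,3}\in\Gamma_{1,2}^2$.

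The key computational engine is the same as in Lemma \ref{g=3-2}: write down the decomposition of $A_{1,2}^2$ using Lemma \ref{jb} (i) and (v), the decomposition of $A_{1,2}A_{2,1}$ using Lemma \ref{jb} (i), (iii), (vi), and then exploit the quadratic identity \eqref{4.1} together with the transfer rule Lemma \ref{jb} (ii) $p_{\wz d,\wz e}^{\wz f}k_{\wz f}=p_{\wz f,\wz e^*}^{\wz d}k_{\wz d}$ to pin down the intersection numbers and valencies $k_{2,2},k_{3,3}$, etc. In each case I expect the valency constraints (divisibility of $2k$ or $k$ by the relevant $k_{2,i}$, combined with $p_{(1,2),(3,3)}^{(1,2)}=k-1$ when $\Gamma_{3,3}$ occurs) to cut the parameters down to a handful of numerical possibilities, most of which are killed immediately by an inequality coming from \eqref{4.1} or by Lemma \ref{jb} (vi) bounding $|\Gamma_{1,2}\Gamma_{2,1}|\le k$. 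For the surviving parameter sets I would run a local counting argument inside a short circuit through an arc of $\Gamma_{2,2}$ — pick a path $(x,y,z)$ with $(x,z)\in\Gamma_{2,2}$, use $P_{(1,2),(1,2)}(x,z)\subseteq P_{(2,2),(2,1)}(y,z)$ from Lemma \ref{(1,2)^2} (i) and the smallness of $p_{(2,2),(2,1)}^{(1,2)}$ to propagate the $\Gamma_{2,2}$-relation around the circuit, and derive either $\Gamma_{1,2}\in\Gamma_{1,2}^2$ (contradicting our standing hypothesis) or a contradiction with $\Gamma_{1,2}\Gamma_{2,1}$ via Lemma \ref{jb2}, exactly as in Case 1 of Lemma \ref{g=3-2}.

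The main obstacle is the case $k=6$, where the bounds are loosest: here $p_{(1,2),(2,1)}^{(2,2)}$ can be as large as $2$ and $\Gamma_{1,2}^2$ may contain $\Gamma_{2,2}$, $\Gamma_{2,3}$ and possibly $\Gamma_{2,4}$, giving several sub-configurations. I anticipate needing to push the combinatorics one step further — examining $\Gamma_{2,2}\Gamma_{1,2}$ and hence $\Gamma_{1,2}^3$, applying Lemma \ref{jb3} to conclude $\wz\partial(\Gamma)$ is a small set, computing $|V\Gamma|$, and then appealing to the classification in \cite{AH} (or \cite{AH26}) to rule out the resulting small association schemes — since every putative $\Gamma$ in this branch would have to be one of the digraphs in Theorem \ref{Main3}, none of which contains an arc of type $(1,2)$ with $\Gamma_{2,2}\in\Gamma_{1,2}\Gamma_{2,1}$. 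Once $k=6$ is dispatched, the cases $k=4,5$ are comparatively short: the divisibility and \eqref{4.1} constraints leave at most one or two parameter vectors, each eliminated by the local circuit argument above or by a direct vertex count against \cite{AH}.
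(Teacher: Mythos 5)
Your overall strategy (contradiction, reduction of $k$ via Lemma \ref{jb-1.1} and Lemma \ref{(1,2)^2}, then parameter elimination through \eqref{4.1}, the transfer rule Lemma \ref{jb} (ii), and local counting around an arc of $\Gamma_{2,2}$) is the same one the paper uses, but what you have written is a plan rather than a proof: every decisive step is deferred with ``I expect'' or ``I anticipate'', and no contradiction is actually exhibited. The entire difficulty of this lemma lies in the case analysis you leave unexecuted. For example, for $k=5$ the paper must split on whether $|P_{(2,1),(1,2)}(y,y')|=1$ for $y\in P_{(1,2),(1,2)}(x,z)$ and $y'\in P_{(2,1),(1,2)}(x,z)$; in the second subcase it shows $(y',y)\in\Gamma_{2,3}$, forces $k_{2,3}=5$ and $p_{(1,2),(2,3)}^{(1,2)}=p_{(1,2),(3,2)}^{(1,2)}\geq2$, and contradicts Lemma \ref{jb} (iii). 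For $k=4$ it pins down $\Gamma_{1,2}\Gamma_{2,1}=\{\Gamma_{0,0},\Gamma_{2,2},\Gamma_{2,3},\Gamma_{3,2}\}$ and extracts from \eqref{4.1} the impossible identity $1+p_{(1,2),(1,2)}^{(2,3)}=1+2p_{(1,2),(2,1)}^{(2,3)}$ after showing $p_{(1,2),(1,2)}^{(2,3)}=p_{(1,2),(2,1)}^{(2,3)}$. None of this is recoverable from the outline you give, so the proposal cannot be credited as a proof of the statement.

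There is also a missed observation that sends you into a branch the paper never needs. For $(x,z)\in\Gamma_{2,2}$ and $y\in P_{(1,2),(1,2)}(x,z)$, the sets $P_{(1,2),(1,2)}(z,x)$ and $P_{(1,2),(2,1)}(x,z)$ are disjoint (their elements lie at distance $2$, respectively $1$, from $x$), both nonempty under your hypothesis, and by Lemma \ref{(1,2)^2} both contained in $P_{(2,2),(2,1)}(y,z)\cup P_{(2,3),(2,1)}(y,z)$, whose size is at most $\sum_{i>1}p_{(2,i),(2,1)}^{(1,2)}=2$. Hence $p_{(1,2),(1,2)}^{(2,2)}=p_{(1,2),(2,1)}^{(2,2)}=1$, and Lemma \ref{jb-1.1} then gives $k-4\leq1$, i.e.\ $k\in\{4,5\}$. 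So the case $k=6$, which you single out as ``the main obstacle'' and propose to settle by computing $|V\Gamma|$ and appealing to \cite{AH} or \cite{AH26}, does not occur at all; and your plan for it is not viable as stated in any case, since computing $|V\Gamma|$ would require determining all of $\wz{\partial}(\Gamma)$ together with every valency $k_{\wz{i}}$, which is far more than the constraints you list would deliver.
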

\begin{proof}
Assume the contrary, namely, $\Gamma_{2,2}\in\Gamma_{1,2}\Gamma_{2,1}$. Since $\sum_{i>1}p_{(2,i),(2,1)}^{(1,2)}=2$, by Lemma \ref{(1,2)^2}, we get $p_{(1,2),(1,2)}^{(2,2)}=p_{(1,2),(2,1)}^{(2,2)}=1$. Lemma \ref{jb-1.1} implies $k=4$ or $5$. Let $(x,y,z)$ be a path such that $(x,z)\in\Gamma_{2,2}$. Pick a vertex $y'\in P_{(2,1),(1,2)}(x,z)$.

Suppose $|P_{(2,1),(1,2)}(y,y')|=1$ and $k=5$. Let $\Gamma_{1,2}(z)=\{w_i\mid0\leq i\leq4\}$. Since $p_{(2,1),(1,2)}^{(1,2)}=3$, we may assume $w_0,w_1,w_2\in\Gamma_{2,1}(y)$ and $w_0\in\Gamma_{2,1}(y')$. The fact $w_0\in P_{(2,1),(1,2)}(y,y')$ implies $w_1,w_2\notin\Gamma_{2,1}(y')$, and so $w_3,w_4\in\Gamma_{2,1}(y')$. Since $p_{(1,2),(1,2)}^{(2,2)}=1$, we have $w_i\in P_{(1,2),(1,2)}(z,x)$ for some $i\in\{0,1,2,3,4\}$, which implies $y\in P_{(1,2),(2,1)}(w_i,x)$ or $y'\in P_{(1,2),(1,2)}(w_i,x)$, contrary to the fact that $\Gamma_{1,2}\notin\Gamma_{1,2}^2$.

Suppose $k=5$. Then $|P_{(2,1),(1,2)}(y,y')|>1$. Since $p_{(1,2),(2,1)}^{(2,2)}=1$ and $x\in P_{(1,2),(1,2)}(y',y)$, one has $(y',y)\in\Gamma_{2,3}$. It follows that $\Gamma_{2,3}\in\Gamma_{1,2}^2\cap\Gamma_{1,2}\Gamma_{2,1}$. The fact $\sum_{i>1}p_{(2,i),(2,1)}^{(1,2)}=2$ implies $p_{(2,3),(2,1)}^{(1,2)}=p_{(2,2),(2,1)}^{(1,2)}=1$. In view of Lemma \ref{jb} (ii), we obtain $k_{2,3}p_{(1,2),(1,2)}^{(2,3)}=kp_{(2,3),(2,1)}^{(1,2)}=5$. Since $p_{(1,2),(2,1)}^{(2,3)}>1$, from Lemma \ref{jb} (iii), one gets $p_{(1,2),(1,2)}^{(2,3)}=1$ and $k_{2,3}=5$. By Lemma \ref{jb} (ii), we have $p_{(1,2),(2,3)}^{(1,2)}k=p_{(1,2),(2,1)}^{(2,3)}k_{2,3}$, and so $p_{(1,2),(2,3)}^{(1,2)}=p_{(1,2),(3,2)}^{(1,2)}\geq2$, which implies $p_{(1,2),(0,0)}^{(1,2)}+p_{(1,2),(2,2)}^{(1,2)}+p_{(1,2),(2,3)}^{(1,2)}+p_{(1,2),(3,2)}^{(1,2)}\geq6>k$, contrary to Lemma \ref{jb} (iii). Hence, $k=4$.

By Lemma \ref{jb} (iii), we have $\sum_{\wz{j}\neq(0,0)}p_{(1,2),\wz{j}}^{(1,2)}=3$. Since $\sum_{i>1}p_{(2,i),(2,1)}^{(1,2)}=2$, from \eqref{4.1}, one gets $p_{(1,2),(1,2)}^{(2,i)}>1$ for some $i\in\{3,4\}$, which implies $p_{(2,i),(2,1)}^{(1,2)}=p_{(2,2),(2,1)}^{(1,2)}=1$. Pick a vertex $w\in P_{(1,2),(1,2)}(z,x)$. Since $\Gamma_{1,2}\notin\Gamma_{1,2}^2$, we obtain $y\in P_{(3,1),(2,2)}(z,w)$. The fact $x\in P_{(1,2),(2,1)}(y',w)$ implies $y'\in P_{(3,1),(2,i)}(z,w)$, $i=3$ and $\Gamma_{2,3},\Gamma_{3,2}\in\Gamma_{1,2}\Gamma_{2,1}$. Since $\sum_{\wz{j}\neq(0,0)}p_{(1,2),\wz{j}}^{(1,2)}=3$, we get $\Gamma_{1,2}\Gamma_{2,1}=\{\Gamma_{0,0},\Gamma_{2,3},\Gamma_{3,2},\Gamma_{2,2}\}$ with $p_{(2,2),(1,2)}^{(1,2)}=p_{(2,3),(1,2)}^{(1,2)}=p_{(3,2),(1,2)}^{(1,2)}=1$. By Lemma \ref{jb} (ii), one obtains $p_{(1,2),(2,1)}^{(2,3)}k_{2,3}=p_{(2,3),(1,2)}^{(1,2)}k=4$ and $p_{(1,2),(1,2)}^{(2,3)}k_{2,3}=p_{(2,3),(2,1)}^{(1,2)}k=4$, which imply $p_{(1,2),(1,2)}^{(2,3)}=p_{(1,2),(2,1)}^{(2,3)}=p_{(1,2),(2,1)}^{(3,2)}$. In view of \eqref{4.1}, we have $1+p_{(1,2),(1,2)}^{(2,3)}=1+2p_{(1,2),(2,1)}^{(2,3)}$, a contradiction.
\end{proof}

\begin{lemma}\label{g=3-4}
If $\Gamma_{1,2}\notin\Gamma_{1,2}^2$, then $\Gamma_{2,2}\notin\Gamma_{1,2}^2$.
\end{lemma}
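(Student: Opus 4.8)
The plan is to argue by contradiction: assume $\Gamma_{2,2}\in\Gamma_{1,2}^2$. First I would combine this with the facts already in hand, namely that $\Gamma_{2,i}\in\Gamma_{1,2}^2\cap\Gamma_{1,2}\Gamma_{2,1}$ for some $i\in\{2,3\}$, that $\sum_{i>1}p_{(2,i),(2,1)}^{(1,2)}=2$, and that $\Gamma_{2,2}\notin\Gamma_{1,2}\Gamma_{2,1}$ by Lemma \ref{g=3-3}. These force $i=3$, so $\Gamma_{2,3}\in\Gamma_{1,2}^2\cap\Gamma_{1,2}\Gamma_{2,1}$; since $\Gamma_{2,2},\Gamma_{2,3}\in\Gamma_{1,2}^2$ makes $p_{(2,2),(2,1)}^{(1,2)}$ and $p_{(2,3),(2,1)}^{(1,2)}$ both positive by Lemma \ref{jb} (ii), and they sum to $2$, each equals $1$ and $\Gamma_{1,2}^2=\{\Gamma_{2,1},\Gamma_{2,2},\Gamma_{2,3}\}$.

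Next I would read off the numerical data. Applying Lemma \ref{(1,2)^2} (i) to a pair $(x,z)\in\Gamma_{2,2}$ gives $p_{(1,2),(1,2)}^{(2,2)}\leq p_{(2,2),(2,1)}^{(1,2)}=1$, hence $p_{(1,2),(1,2)}^{(2,2)}=1$ and $k_{2,2}=k$ by Lemma \ref{jb} (i); moreover the inclusion in that lemma becomes an equality of one-element sets, so each pair of $\Gamma_{2,2}$ lies on a unique $4$-cycle $x\to y\to z\to w\to x$ with $\wz{\partial}(x,z)=\wz{\partial}(y,w)=(2,2)$. Writing $n=p_{(1,2),(1,2)}^{(2,3)}$ and $m=p_{(1,2),(2,1)}^{(2,3)}=p_{(1,2),(2,1)}^{(3,2)}$, Lemma \ref{jb} (ii) yields $k_{2,3}=k/n$ and $p_{(1,2),(2,3)}^{(1,2)}=p_{(1,2),(3,2)}^{(1,2)}=m/n$, so $n$ divides both $k$ and $m$; with $m\leq2$ (from $\sum_{i>1}p_{(2,i),(2,1)}^{(1,2)}=2$ and Lemma \ref{(1,2)^2} (ii)) and $m\geq k-4$ (Lemma \ref{jb-1.1}), only $k\in\{4,5,6\}$ and finitely many triples $(k,m,n)$ survive.

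I would then feed these into the counting relation \eqref{4.1} and the identity $\sum_{\wz{h}}p_{(1,2),(2,1)}^{\wz{h}}k_{\wz{h}}=k^2$ (Lemma \ref{jb} (i)), using $|\Gamma_{1,2}\Gamma_{2,1}|\leq k$ (Lemma \ref{jb} (vi)) together with $\Gamma_{1,2},\Gamma_{2,1}\notin\Gamma_{1,2}\Gamma_{2,1}$ (Lemma \ref{jb2}) and $\Gamma_{2,2}\notin\Gamma_{1,2}\Gamma_{2,1}$ (Lemma \ref{g=3-3}) to control the extra classes of $\Gamma_{1,2}\Gamma_{2,1}$ and their multiplicities. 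This rules out $k=4$ at once (the forced $\Gamma_{1,2}\Gamma_{2,1}$ is too small to satisfy $\sum p\,k=k^2$), and a short integrality discussion of the valencies and multiplicities of those extra classes also eliminates $k=6$ when $n=1$; what remains is $k=5$ (with $m=n=1$) and $k=6$ with $n=2$.

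For the two surviving cases I expect to need the circuit-and-$P$-set bookkeeping used in Lemmas \ref{g=3-2} and \ref{g=3-3}: exploiting the rigid $4$-cycle attached to each pair of $\Gamma_{2,2}$ and the now-tight inclusion of Lemma \ref{(1,2)^2} (ii) for pairs of $\Gamma_{2,3}$ to determine $\Gamma_{2,2}\Gamma_{1,2}$ and $\Gamma_{2,3}\Gamma_{1,2}$, hence $\Gamma_{1,2}^3$; then Lemma \ref{jb3} pins down $\wz{\partial}(\Gamma)$, one computes $|V\Gamma|=\sum_{\wz{i}}k_{\wz{i}}$, and a contradiction is reached either internally or against the classification of small association schemes in \cite{AH}. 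This last step is the main obstacle: the numerical identities do not by themselves dispose of $k=5$ and $k=6$, and the delicate part is the combinatorial tracking of how multiplication by $\Gamma_{1,2}$ acts on $\Gamma_{2,2}$ and $\Gamma_{2,3}$, together with the correct case split for $\Gamma_{1,2}\Gamma_{2,1}$.
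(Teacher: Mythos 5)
Your opening moves match the paper's: assuming $\Gamma_{2,2}\in\Gamma_{1,2}^2$, combining Lemmas \ref{g=3-2} and \ref{g=3-3} with $\sum_{i>1}p_{(2,i),(2,1)}^{(1,2)}=2$ to get $\Gamma_{1,2}^2=\{\Gamma_{2,1},\Gamma_{2,2},\Gamma_{2,3}\}$ with $p_{(2,2),(2,1)}^{(1,2)}=p_{(2,3),(2,1)}^{(1,2)}=1$, and extracting $p_{(1,2),(1,2)}^{(2,2)}=1$ from Lemma \ref{(1,2)^2} (i), are all correct. The first genuine gap is that you leave $n=p_{(1,2),(1,2)}^{(2,3)}$ and $m=p_{(1,2),(2,1)}^{(2,3)}$ as free parameters and end up with a surviving branch $k=6$, $n=2$ that your numerics do not eliminate. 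The paper instead proves $n=m=1$ outright by a short uniqueness argument you are missing: for $(y,w)\in\Gamma_{2,3}$ with $z\in P_{(1,2),(1,2)}(y,w)$ and $u\in P_{(1,2),(2,1)}(y,w)$, a second such $z'$ or $u'$ would force $z,z'\in P_{(2,1),(2,3)}(w,u)$ or $u,u'\in P_{(2,3),(2,1)}(z,w)$ (because $\Gamma_{2,1},\Gamma_{2,2}\notin\Gamma_{1,2}\Gamma_{2,1}$), contradicting $p_{(2,3),(2,1)}^{(1,2)}=1$. With $n=m=1$ in hand, Lemma \ref{jb} (iii) forces $\Gamma_{1,2}\Gamma_{2,1}=\{\Gamma_{0,0},\Gamma_{2,3},\Gamma_{3,2},\Gamma_{3,3}\}$ with $p_{(1,2),(3,3)}^{(1,2)}=k-3$, and \eqref{4.1} becomes $(k-1)(k-4)=(k-3)p_{(1,2),(2,1)}^{(3,3)}$, which kills $k=4$ and $k=6$ at once and leaves only $k=5$ with $p_{(1,2),(2,1)}^{(3,3)}=2$ and $k_{2,2}=k_{2,3}=k_{3,3}=5$.

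The second and larger gap is the endgame, which you explicitly defer as ``the main obstacle'': it is the bulk of the paper's proof and cannot be waved at. One must chase vertices to show $\Gamma_{2,2}\Gamma_{1,2}$, $\Gamma_{2,3}\Gamma_{1,2}$, $\Gamma_{3,2}\Gamma_{1,2}$ and $\Gamma_{3,3}\Gamma_{1,2}$ are all contained in $\{\Gamma_{1,2},\Gamma_{2,1},\Gamma_{2,2},\Gamma_{2,3},\Gamma_{3,2},\Gamma_{3,3}\}$, so that Lemma \ref{jb3} yields $\wz{\partial}(\Gamma)=\{(0,0),(1,2),(2,1),(2,2),(2,3),(3,2),(3,3)\}$ and $|V\Gamma|=31$; the contradiction is then with the classification of association schemes of order $31$, i.e.\ \cite{AH31}, not \cite{AH} as you suggest (the latter only covers up to $21$ vertices). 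As written, your proposal is a plan whose two hardest steps --- the uniqueness argument that collapses the case analysis, and the determination of the full scheme in the residual case --- are absent.
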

\begin{proof}
Assume the contrary, namely, $\Gamma_{2,2}\in\Gamma_{1,2}^2$. By Lemmas \ref{g=3-2} and \ref{g=3-3}, we have $\Gamma_{1,2}^2\cap\Gamma_{1,2}\Gamma_{2,1}=\{\Gamma_{2,3}\}$. Since $\sum_{i>1}p_{(2,i),(2,1)}^{(1,2)}=2$, we have $p_{(2,2),(2,1)}^{(1,2)}=p_{(2,3),(2,1)}^{(1,2)}=1$, and so $\Gamma_{1,2}^2=\{\Gamma_{2,1},\Gamma_{2,2},\Gamma_{2,3}\}$. Let $(y,z,w)$ be a path such that $(y,w)\in\Gamma_{2,3}$. Pick a vertex $u\in P_{(1,2),(2,1)}(y,w)$.

Suppose that there exists $z'\in P_{(1,2),(1,2)}(y,w)$ with $z'\neq z$ or $u'\in P_{(1,2),(2,1)}(y,w)$ with $u'\neq u$. Note that $y\in P_{(2,1),(1,2)}(z,u)\cap P_{(2,1),(1,2)}(z',u)$ and $w\in P_{(1,2),(1,2)}(z,u)\cap P_{(1,2),(1,2)}(z',u)$, or $y\in P_{(2,1),(1,2)}(z,u)\cap P_{(2,1),(1,2)}(z,u')$ and $w\in P_{(1,2),(1,2)}(z,u)\cap P_{(1,2),(1,2)}(z,u')$. Since $\Gamma_{2,1},\Gamma_{2,2}\notin\Gamma_{1,2}\Gamma_{2,1}$, we have $z,z'\in P_{(2,1),(2,3)}(w,u)$ or $u,u'\in P_{(2,3),(2,1)}(z,w)$, contrary to the fact that $p_{(2,1),(2,3)}^{(1,2)}=1$.

Note that $p_{(1,2),(1,2)}^{(2,3)}=p_{(1,2),(2,1)}^{(2,3)}=1$. Lemma \ref{(1,2)^2} (i) implies $p_{(1,2),(1,2)}^{(2,2)}=1$. By Lemma \ref{jb} (v) and setting $\wz{d}=\wz{e}=(1,2)$ in Lemma \ref{jb} (i), we obtain $k_{2,2}=k_{2,3}=k$. In view of Lemma \ref{jb} (ii), one gets $p_{(1,2),(2,3)}^{(1,2)}k=p_{(1,2),(2,1)}^{(2,3)}k_{2,3}$, and so $p_{(1,2),(2,3)}^{(1,2)}=p_{(1,2),(3,2)}^{(1,2)}=1$. Since $k\in\{4,5,6\}$ and $\Gamma_{2,2}\notin\Gamma_{1,2}\Gamma_{2,1}$, by setting $\wz{d}=\wz{f}=(1,2)$ in Lemma \ref{jb} (iii), we have $\Gamma_{1,2}\Gamma_{2,1}=\{\Gamma_{0,0},\Gamma_{2,3},\Gamma_{3,2},\Gamma_{3,3}\}$ and $p_{(1,2),(3,3)}^{(1,2)}=k-3$. By \eqref{4.1}, one obtains $k=5$ and $p_{(1,2),(2,1)}^{(3,3)}=2$. In view of Lemma \ref{jb} (ii), one has $p_{(1,2),(2,1)}^{(3,3)}k_{3,3}=p_{(1,2),(3,3)}^{(1,2)}k$, and so $k_{3,3}=5$.

Pick a vertex $x\in P_{(2,1),(2,2)}(y,z)$. Since $y\in P_{(2,1),(1,2)}(z,u)$ and $\Gamma_{2,1},\Gamma_{2,2}\notin\Gamma_{1,2}\Gamma_{2,1}$, we have $(z,u)\in\Gamma_{2,3}$. Since $p_{(2,1),(2,1)}^{(1,2)}=3$, we may assume $P_{(2,1),(2,1)}(u,y)=\{v_0,v_1,v_2\}$ and $(v_2,z)\in\Gamma_{2,1}$. The fact $p_{(1,2),(2,1)}^{(2,3)}=1$ and $v_2\in P_{(1,2),(2,1)}(z,u)$ imply that $v_0,v_1\notin\Gamma_{1,2}(z)$. Since $\sum_{i>1}p_{(2,i),(2,1)}^{(1,2)}=2$, one gets $x\in\{v_0,v_1\}$, and so $(x,w)\in\Gamma_{2,2}\cup\Gamma_{3,2}$ from $P_{(1,2),(2,1)}(x,z)=\emptyset$. If $(x,w)\in\Gamma_{3,2}$, then there exists $u'\in P_{(2,1),(1,2)}(x,w)$, which implies $u',z\in P_{(2,1),(2,3)}(w,u)$ since $x\in P_{(1,2),(2,1)}(u',u)$, contrary to the fact that $p_{(2,3),(2,1)}^{(1,2)}=1$. Thus, $(x,w)\in\Gamma_{2,2}$.

Let $\Gamma_{1,2}(z)=\{w,w_1,w_2,w_3,w_4\}$ and $\Gamma_{2,1}(y)=\{x,x_1,x_2,x_3,x_4\}$. In view of $p_{(1,2),(1,2)}^{(2,2)}=p_{(2,1),(1,2)}^{(2,3)}=1$, we may assume $(w_1,x),(x_1,w)\in\Gamma_{1,2}$. The fact $\Gamma_{1,2}\Gamma_{2,1}=\{\Gamma_{0,0},\Gamma_{2,3},\Gamma_{3,2},\Gamma_{3,3}\}$ implies $w_1,w\notin\Gamma_{2,1}(y)$ and $x,x_1\notin\Gamma_{1,2}(z)$. Since $p_{(2,1),(2,1)}^{(1,2)}=3$, we get $w_2,w_3,w_4\in\Gamma_{2,1}(y)$ and $x_2,x_3,x_4\in\Gamma_{1,2}(z)$. Then $(x,w_i),(x_i,w)\in\Gamma_{2,3}\cup\Gamma_{3,2}\cup\Gamma_{3,3}$ for $2\leq i\leq4$. It follows that $\Gamma_{2,2}\Gamma_{1,2},\Gamma_{2,3}\Gamma_{1,2}\subseteq\{\Gamma_{1,2},\Gamma_{2,1},\Gamma_{2,2},\Gamma_{2,3},\Gamma_{3,2},\Gamma_{3,3}\}$. Hence, $\Gamma_{1,2}^3\subseteq\{\Gamma_{0,0}, \Gamma_{1,2}, \Gamma_{2,1},\Gamma_{2,2},\Gamma_{2,3},\Gamma_{3,2},\Gamma_{3,3}\}$.

Let $(y_0,y_2)\in\Gamma_{3,3}$ and $(y_2,y_3)\in\Gamma_{1,2}$. Since $p_{(2,3),(2,1)}^{(1,2)}=1$ and $p_{(2,1),(1,2)}^{(3,3)}=2$, there exists $y_1\in P_{(2,1),(1,2)}(y_0,y_2)$ with $(y_1,y_3)\in\Gamma_{2,1}\cup\Gamma_{2,2}$. It follows that $(y_0,y_3)\in\Gamma_{1,2}\cup\Gamma_{2,1}\cup\Gamma_{2,2}\cup\Gamma_{2,3}\cup\Gamma_{3,2}\cup\Gamma_{3,3}$, and so $\Gamma_{3,3}\Gamma_{1,2}\subseteq\{\Gamma_{1,2},\Gamma_{2,1},\Gamma_{2,2},\Gamma_{2,3},\Gamma_{3,2},\Gamma_{3,3}\}$.

Let $(z_0,z_2)\in\Gamma_{3,2}$. Since $p_{(1,2),(2,1)}^{(2,3)}=1$, there exists $z_1\in P_{(2,1),(1,2)}(z_0,z_2)$. Let $\Gamma_{1,2}(z_2)=\{u_0,u_1,u_2,u_3,u_4\}$. Without loss of generality, we may assume $u_0\in P_{(1,2),(1,2)}(z_2,z_0)$. Since $z_0\in P_{(1,2),(2,1)}(z_1,u_0)$, we have $(z_1,u_0)\in\Gamma_{2,3}$. The fact $p_{(2,3),(2,1)}^{(1,2)}=1$ implies $(z_1,u_i)\in\Gamma_{2,1}\cup\Gamma_{2,2}$ for $1\leq i\leq 4$. It follows that $(u_i,z_0)\in\Gamma_{1,2}\cup\Gamma_{2,1}\cup\Gamma_{2,2}\cup\Gamma_{2,3}\cup\Gamma_{3,2}\cup\Gamma_{3,3}$. Hence, $\Gamma_{3,2}\Gamma_{1,2}\subseteq\{\Gamma_{1,2},\Gamma_{2,1},\Gamma_{2,2},\Gamma_{2,3},\Gamma_{3,2},\Gamma_{3,3}\}$, and so $\Gamma_{1,2}^4\subseteq\{\Gamma_{0,0}, \Gamma_{1,2}, \Gamma_{2,1},\Gamma_{2,2},\Gamma_{2,3},\Gamma_{3,2},\Gamma_{3,3}\}$.

Since $k=k_{2,3}=k_{2,2}=k_{3,3}=5$, from Lemma \ref{jb3}, one obtains $\wz{\partial}(\Gamma)=\{(0,0),(1,2),(2,1),(2,2),(2,3),(3,2),(3,3)\}$ with $|V\Gamma|=31$, contrary to \cite{AH31}.
\end{proof}

\begin{lemma}\label{g=3-5}
If $\Gamma_{1,2}\notin\Gamma_{1,2}^2$, then $\Gamma_{1,2}^2=\{\Gamma_{2,1},\Gamma_{2,3}\}$.
\end{lemma}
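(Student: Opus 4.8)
We are under the running assumptions $p_{(1,2),(1,2)}^{(2,1)}=k-2$, $k\in\{4,5,6\}$, and $\Gamma_{1,2}\notin\Gamma_{1,2}^2$. We already know from Lemmas \ref{jb2} and \ref{g=3-2} that some $\Gamma_{2,i}$ with $i\in\{2,3\}$ lies in $\Gamma_{1,2}^2\cap\Gamma_{1,2}\Gamma_{2,1}$; from Lemma \ref{g=3-3} that $\Gamma_{2,2}\notin\Gamma_{1,2}\Gamma_{2,1}$; and from Lemma \ref{g=3-4} that $\Gamma_{2,2}\notin\Gamma_{1,2}^2$. Hence the only surviving candidate is $\Gamma_{2,3}\in\Gamma_{1,2}^2\cap\Gamma_{1,2}\Gamma_{2,1}$, with $p_{(2,3),(2,1)}^{(1,2)}\geq1$. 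So the task reduces to ruling out the existence of any \emph{further} $\Gamma_{2,i}$ (with $i\geq4$, the only possibilities since $p_{(1,2),(1,2)}^{(2,2)}$-type entries are excluded) in $\Gamma_{1,2}^2$; equivalently, to showing $\Gamma_{1,2}^2=\{\Gamma_{2,1},\Gamma_{2,3}\}$ exactly. First I would record, via Lemma \ref{jb} (iii) and the value $p_{(2,1),(2,1)}^{(1,2)}=k-2$, the constraint $\sum_{i>1}p_{(2,i),(2,1)}^{(1,2)}=2$, so at most two classes besides $\Gamma_{2,1}$ appear in $\Gamma_{1,2}^2$, and if $\Gamma_{2,3}$ appears with multiplicity $p_{(2,3),(2,1)}^{(1,2)}=2$ we are immediately done. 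So assume for contradiction $p_{(2,3),(2,1)}^{(1,2)}=1$ and there is a second class $\Gamma_{2,m}\in\Gamma_{1,2}^2$ with $m\geq4$ and $p_{(2,m),(2,1)}^{(1,2)}=1$.

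The core of the argument will be a counting/linking contradiction using the identity \eqref{4.1} together with Lemma \ref{jb} (i), (ii) and (v), in the spirit of the preceding lemmas. Writing $A_{1,2}^2=(k-2)A_{2,1}+A_{2,3}+p_{(1,2),(1,2)}^{(2,m)}A_{2,m}$, Lemma \ref{jb} (i) gives $k_{2,3}\mid k$ (since $p_{(1,2),(1,2)}^{(2,3)}k_{2,3}=k$ via Lemma \ref{jb} (ii) and $p_{(2,3),(2,1)}^{(1,2)}=1$) and $p_{(1,2),(1,2)}^{(2,m)}k_{2,m}=k$; feeding these plus $p_{(1,2),(3,3)}^{(1,2)}$-type data into \eqref{4.1} pins down $k$, $p_{(1,2),(1,2)}^{(2,m)}$ and $p_{(1,2),(2,1)}^{(3,3)}$ to a very short list, exactly as in the Case-2 analysis of Lemma \ref{g=3-2}. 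In parallel I would exploit $\Gamma_{2,2}\notin\Gamma_{1,2}\Gamma_{2,1}$ (Lemma \ref{g=3-3}) and $\Gamma_{2,2}\notin\Gamma_{1,2}^2$ (Lemma \ref{g=3-4}): along a circuit $(x_0,x_1,\ldots)$ realizing $(x_0,x_2)\in\Gamma_{2,3}$, the fact that $\Gamma_{1,2}\notin\Gamma_{1,2}^2$ forces the ``back arcs'' into $\Gamma_{2,1}\cup\Gamma_{2,3}\cup\Gamma_{2,m}$ only, and the multiplicity-one conditions $p_{(2,3),(2,1)}^{(1,2)}=p_{(2,m),(2,1)}^{(1,2)}=1$ then force $p_{(1,2),(1,2)}^{(2,3)}=p_{(1,2),(1,2)}^{(2,m)}=1$, contradicting the value of $p_{(1,2),(1,2)}^{(2,m)}$ extracted from \eqref{4.1} (which the numerics will have made $>1$). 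I expect a small number of leftover numeric cases — most likely $k=4$ with $k_{2,m}=2$, and perhaps $k=5$ — which I would close individually by computing $k_{3,3}$ from $A_{1,2}A_{2,1}$ (Lemma \ref{jb} (i) with $\wz{d}=\wz{e}^*=(1,2)$) and then deriving a contradiction from Lemma \ref{jb} (ii) applied to a triple involving $\Gamma_{2,m}$, $\Gamma_{1,2}$ and $\Gamma_{3,3}$, exactly the style of contradiction used at the ends of Lemmas \ref{g=3-2} and \ref{g=3-3}.

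The main obstacle, as in those lemmas, is not any single deep idea but keeping the bookkeeping tight: one must simultaneously juggle the decomposition of $A_{1,2}^2$, the decomposition of $A_{1,2}A_{2,1}$, the divisibility constraints $p_{\wz d,\wz e}^{\wz f}k_{\wz f}\in\{{\rm lcm}(k_{\wz d},k_{\wz e})\mathbb Z\}$ from Lemma \ref{jb} (v), and the global linear relation \eqref{4.1}, and check that the few surviving parameter tuples each collapse. I would organize the write-up as: (1) reduce to ``$\Gamma_{2,3}\in\Gamma_{1,2}^2$ and at most one extra class $\Gamma_{2,m}$, $m\geq4$''; (2) assume the extra class exists and derive $p_{(1,2),(1,2)}^{(2,3)}=p_{(1,2),(1,2)}^{(2,m)}=1$ from the circuit/multiplicity argument; (3) plug into \eqref{4.1} to force $k$ small and obtain either a contradiction directly or $p_{(1,2),(1,2)}^{(2,m)}>1$, contradicting (2); (4) conclude $\Gamma_{1,2}^2=\{\Gamma_{2,1},\Gamma_{2,3}\}$.
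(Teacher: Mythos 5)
Your reduction in step (1) is fine and matches the paper's setup: since every class in $\Gamma_{1,2}^2$ has the form $\Gamma_{2,i}$ with $i\leq4$, and Lemmas \ref{g=3-2}--\ref{g=3-4} put $\Gamma_{2,3}$ in and keep $\Gamma_{1,2},\Gamma_{2,2}$ out, the only thing to exclude is $\Gamma_{1,2}^2=\{\Gamma_{2,1},\Gamma_{2,3},\Gamma_{2,4}\}$ with $p_{(2,3),(2,1)}^{(1,2)}=p_{(2,4),(2,1)}^{(1,2)}=1$. The gap is in your step (2): the claim that these multiplicity-one conditions ``force $p_{(1,2),(1,2)}^{(2,3)}=p_{(1,2),(1,2)}^{(2,m)}=1$'' is false. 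Lemma \ref{jb} (ii) only gives $p_{(1,2),(1,2)}^{(2,j)}k_{2,j}=p_{(2,j),(2,1)}^{(1,2)}k=k$, so $p_{(1,2),(1,2)}^{(2,j)}=k/k_{2,j}$, which is $1$ only if $k_{2,j}=k$. The circuit argument you are importing from Lemma \ref{g=3-2} (two middle vertices $x_1,x_1'$ both landing in $P_{(2,1),(2,i)}(x_2,x_3)$) worked there only because $\Gamma_{1,2}\Gamma_{2,1}=\{\Gamma_{0,0},\Gamma_{3,3}\}$ forced the relevant back-arc into $\Gamma_{2,i}$; here $\Gamma_{1,2}\Gamma_{2,1}$ contains $\Gamma_{2,3}$ and $\Gamma_{3,2}$, so no such forcing is available. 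In fact, in the configuration to be excluded the counting machinery is entirely consistent: with $k=4$ one gets $k_{2,3}=4$, $k_{2,4}=2$, $k_{3,3}=4$, $p_{(1,2),(1,2)}^{(2,3)}=1$, $p_{(1,2),(1,2)}^{(2,4)}=2$, $p_{(1,2),(2,1)}^{(3,3)}=1$, and \eqref{4.1} reads $1+p_{(1,2),(1,2)}^{(2,4)}=2+p_{(1,2),(2,1)}^{(3,3)}$, which is \emph{satisfied} by these values rather than contradicted. So your step (3) never produces the contradiction you expect.

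Because of this, the hard part of the lemma is missing from your plan. The paper, after pinning down exactly the parameter set above (and disposing of $k=5$ by a neighborhood count in $\Gamma_{1,2}(x_2)$ showing $P_{(2,4),(2,1)}(x_1,x_2)=\emptyset$), has to finish with a genuinely structural argument: a preliminary claim that for $(y_0,y_2)\in\Gamma_{2,4}$ with $P_{(1,2),(1,2)}(y_0,y_2)=\{y_1,y_1'\}$ one of $y_1,y_1'$ is at distance-pattern $(2,1)$ from any given out-neighbor of $y_2$, followed by the explicit construction of a configuration $x_0,x_1,\ldots,x_8$ realizing $\Gamma_{1,2}(x_0)=\{x_1,x_3,x_5,x_7\}$ and a case analysis on $\wz{\partial}(x_4,x_6)$ that contradicts $p_{(1,2),(3,2)}^{(1,2)}=1$ and $p_{(2,3),(2,1)}^{(1,2)}=1$. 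Some argument of this local-configuration type is unavoidable; intersection-number bookkeeping alone cannot close the case.
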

\begin{proof}
Assume the contrary, namely, $\Gamma_{1,2}^2\neq\{\Gamma_{2,1},\Gamma_{2,3}\}$. Since $\Gamma_{1,2}\notin\Gamma_{1,2}^2$, from Lemmas \ref{g=3-2} and \ref{g=3-4}, we have $\Gamma_{1,2}^2=\{\Gamma_{2,1},\Gamma_{2,3},\Gamma_{2,4}\}$. The fact $\sum_{i>1}p_{(2,i),(2,1)}^{(1,2)}=2$ implies $p_{(2,3),(2,1)}^{(1,2)}=p_{(2,4),(2,1)}^{(1,2)}=1$. By Lemma \ref{(1,2)^2} (ii), we obtain $p_{(1,2),(2,1)}^{(2,3)}=p_{(1,2),(2,1)}^{(3,2)}=1$. Lemma \ref{jb-1.1} implies $k\in\{4,5\}$. Let $(x_0,x_1,x_2)$ be a path such that $(x_0,x_2)\in\Gamma_{2,3}$. Pick a vertex $u\in P_{(2,1),(1,2)}(x_0,x_2)$. The fact $\Gamma_{1,2}^2\cap\Gamma_{1,2}\Gamma_{2,1}=\{\Gamma_{2,3}\}$ implies $(u,x_1)\in\Gamma_{2,3}$.

Suppose $k=5$. Let $\Gamma_{1,2}(x_2)=\{v_i\mid0\leq i\leq 4\}$. Since $p_{(2,1),(2,1)}^{(1,2)}=3$, we may assume $v_0,v_1,v_2\in\Gamma_{2,1}(x_1)$ and $v_0\in\Gamma_{2,1}(u)$. The fact $p_{(2,1),(1,2)}^{(2,3)}=1$ implies $v_1,v_2\notin\Gamma_{2,1}(u)$, and so $v_3,v_4\in\Gamma_{2,1}(u)$. It follows that $P_{(2,4),(2,1)}(x_1,x_2)=\emptyset$, a contradiction. Then $k=4$.

Since $p_{(1,2),(2,3)}^{(1,2)}=p_{(1,2),(3,2)}^{(1,2)}$, from Lemma \ref{jb} (iii), we get $p_{(1,2),(2,3)}^{(1,2)}=p_{(1,2),(3,2)}^{(1,2)}=p_{(1,2),(3,3)}^{(1,2)}=1$, which implies $\Gamma_{1,2}\Gamma_{2,1}=\{\Gamma_{0,0},\Gamma_{2,3},\Gamma_{3,2},\Gamma_{3,3}\}$. In view of Lemma \ref{jb} (i) and (v), we obtain $k_{2,3}=4$ and $p_{(1,2),(1,2)}^{(2,4)}k_{2,4}=p_{(1,2),(2,1)}^{(3,3)}k_{3,3}=4$. By \eqref{4.1}, one has $p_{(1,2),(1,2)}^{(2,4)}=1+p_{(1,2),(2,1)}^{(3,3)}$, which implies $p_{(1,2),(2,1)}^{(3,3)}=1$ and $p_{(1,2),(1,2)}^{(2,4)}=2$. It follows that $k_{3,3}=4$ and $k_{2,4}=2$.

Let $(y_0,y_2)\in\Gamma_{2,4}$ and $P_{(1,2),(1,2)}(y_0,y_2)=\{y_1,y_1'\}$. We claim that $(y_1,y_3)\in\Gamma_{2,1}$ if $(y_1',y_3)\notin\Gamma_{2,1}$ for $y_3\in\Gamma_{1,2}(y_2)$. Since $p_{(2,1),(2,1)}^{(1,2)}=2$, we may assume $y_3',y_3''\in P_{(2,1),(2,1)}(y_1',y_2)$. The fact $y_0\in P_{(2,1),(1,2)}(y_1,y_1')$ and $p_{(1,2),(2,1)}^{\wz{l}}\leq1$ for $\wz{l}\neq(0,0)$ imply $y_3',y_3''\notin\Gamma_{2,1}(y_1)$, and so $(y_1,y_3)\in\Gamma_{2,1}$. Thus, our claim is valid.

Since $p_{(1,2),(2,1)}^{(2,3)}=1$, there exists a vertex $x_3\in P_{(1,2),(2,1)}(x_0,x_2)$. The fact $x_0\in P_{(2,1),(1,2)}(x_1,x_3)$ and $\Gamma_{1,2}^2\cap\Gamma_{1,2}\Gamma_{2,1}=\{\Gamma_{2,3}\}$ imply $(x_1,x_3)\in\Gamma_{2,3}$. Since $p_{(1,2),(3,2)}^{(1,2)}=1$, there exist $x_5\in P_{(1,2),(3,2)}(x_0,x_3)$ and $x_4\in P_{(1,2),(1,2)}(x_3,x_5)$ such that $x_5\notin\{x_1,x_3\}$ and $(x_0,x_4)\in\Gamma_{2,3}$. It follows that there also exist $x_7\in P_{(1,2),(3,2)}(x_0,x_5)$ and $x_6\in P_{(1,2),(1,2)}(x_5,x_7)$ such that $x_7\notin\{x_3,x_5\}$ and $(x_0,x_6)\in\Gamma_{2,3}$.

Suppose $x_1=x_7$. Let $\Gamma_{1,2}(x_0)=\{x_1,x_3,x_5,x\}$. Since $p_{(1,2),(2,3)}^{(1,2)}=1$, we have $x_i\in P_{(1,2),(2,3)}(x_0,x)$ for some $i\in\{1,3,5\}$, which implies $x,x_{i+2}\in P_{(1,2),(3,2)}(x_0,x_i)$, contrary to the fact that $p_{(1,2),(3,2)}^{(1,2)}=1$. Hence, $\Gamma_{1,2}(x_0)=\{x_1,x_3,x_5,x_7\}$.

Since $p_{(1,2),(3,2)}^{(1,2)}=p_{(1,2),(2,3)}^{(1,2)}=1$, we have $x_1\in P_{(1,2),(3,2)}(x_0,x_7)$, which implies that there exists $x_8\in P_{(1,2),(1,2)}(x_7,x_1)$ such that $(x_0,x_8)\in\Gamma_{2,3}$. The fact $x_0\in P_{(2,1),(2,3)}(x_5,x_6)$ implies $(x_4,x_6)\in\Gamma_{2,1}\cup\Gamma_{2,4}$.

Suppose $(x_4,x_6)\in\Gamma_{2,4}$. Since $p_{(1,2),(1,2)}^{(2,4)}=2$, there exists $x_5'\in P_{(1,2),(1,2)}(x_4,x_6)$ with $x_5'\neq x_5$. By the claim, $(x_7,x_5',x_3)$ is a path. Note that $x_7\in P_{(1,2),(1,2)}(x_0,x_5')$ and $x_3\in P_{(1,2),(2,1)}(x_0,x_5')$. Since $\Gamma_{1,2}^2\cap\Gamma_{1,2}\Gamma_{2,1}=\{\Gamma_{2,3}\}$, one gets $(x_0,x_5')\in\Gamma_{2,3}$. The fact $x_8,x_5'\in P_{(2,3),(2,1)}(x_0,x_7)$ and $p_{(2,3),(2,1)}^{(1,2)}=1$ imply $x_8=x_5'$. Since $x_0,x_8\in P_{(2,1),(1,2)}(x_1,x_3)$ and $p_{(1,2),(2,1)}^{(2,3)}=1$, one gets $x_8=x_0$, a contradiction.

Note that $(x_4,x_6)\in\Gamma_{2,1}$. Similarly, $(x_2,x_4),(x_6,x_8),(x_8,x_2)\in\Gamma_{2,1}$. Since $\Gamma_{1,2}^2=\{\Gamma_{2,1},\Gamma_{2,3},\Gamma_{2,4}\}$, one has $(x_8,x_4)\in\Gamma_{2,1}$, a contradiction.
\end{proof}

\begin{lemma}\label{g=3-6}
If $\Gamma_{1,2}\notin\Gamma_{1,2}^2$, then $k=6$.
\end{lemma}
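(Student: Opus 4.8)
Since it has already been established that $k\in\{4,5,6\}$, it suffices to rule out $k=4$ and $k=5$, so the plan is to argue by contradiction, assuming $k\in\{4,5\}$. By Lemma~\ref{g=3-5} one may start from $A_{1,2}^{2}=(k-2)A_{2,1}+p_{(1,2),(1,2)}^{(2,3)}A_{2,3}$; Lemma~\ref{jb}(i) then gives $p_{(1,2),(1,2)}^{(2,3)}k_{2,3}=2k$, Lemma~\ref{jb}(ii) gives $p_{(2,3),(2,1)}^{(1,2)}=2$ (using $\sum_{i>1}p_{(2,i),(2,1)}^{(1,2)}=2$), and, since $\Gamma_{1,2}\notin\Gamma_{1,2}\Gamma_{2,1}$ by Lemma~\ref{jb2}, Lemma~\ref{jb}(iii) gives $\sum_{\wz{e}}p_{(1,2),\wz{e}}^{(1,2)}=k$ with $p_{(1,2),(2,1)}^{(1,2)}=0$.

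The first step is to pin down $\Gamma_{1,2}\Gamma_{2,1}$. As $\Gamma$ has one type of arcs and $\Gamma_{1,2},\Gamma_{2,1}\notin\Gamma_{1,2}\Gamma_{2,1}$ (Lemmas~\ref{jb2} and \ref{jb}(ii)), every class in $\Gamma_{1,2}\Gamma_{2,1}$ other than $\Gamma_{0,0}$ has both coordinates in $\{2,3\}$; Lemma~\ref{g=3-3} excludes $\Gamma_{2,2}$, and since $\Gamma_{2,3}\in\Gamma_{1,2}^{2}\cap\Gamma_{1,2}\Gamma_{2,1}$ (so $\Gamma_{3,2}$ lies there too, $\Gamma_{1,2}\Gamma_{2,1}$ being closed under reversal), we are left with $\Gamma_{1,2}\Gamma_{2,1}=\{\Gamma_{0,0},\Gamma_{2,3},\Gamma_{3,2}\}$ or $\{\Gamma_{0,0},\Gamma_{2,3},\Gamma_{3,2},\Gamma_{3,3}\}$. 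Using Lemma~\ref{jb}(ii) and commutativity one gets $p_{(1,2),(2,3)}^{(1,2)}=p_{(1,2),(3,2)}^{(1,2)}$, and feeding this into $\sum_{\wz{e}}p_{(1,2),\wz{e}}^{(1,2)}=k$ (the nonzero terms $p_{(1,2),\wz{e}}^{(1,2)}$ being exactly those with $\Gamma_{\wz{e}}\in\Gamma_{1,2}\Gamma_{2,1}$) shows that the first option forces $k$ odd, hence $k=5$, while the second gives $1+2p_{(1,2),(2,3)}^{(1,2)}+p_{(1,2),(3,3)}^{(1,2)}=k$.

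For $k=4$ only the second option survives, forcing $p_{(1,2),(2,3)}^{(1,2)}=p_{(1,2),(3,3)}^{(1,2)}=1$. Substituting the identities $p_{(1,2),(2,1)}^{(2,3)}k_{2,3}=p_{(1,2),(2,1)}^{(3,2)}k_{3,2}=p_{(1,2),(2,1)}^{(3,3)}k_{3,3}=k$ (Lemma~\ref{jb}(ii)) into \eqref{4.1} yields $k_{2,3}=2k_{3,3}$; combined with $p_{(1,2),(1,2)}^{(2,3)}k_{2,3}=8$ and integrality of $4/k_{2,3}$ this leaves only $k_{2,3}\in\{2,4\}$. For each value I would compute all the remaining valencies, then work out $\Gamma_{1,2}\Gamma_{2,3}$ and $\Gamma_{2,3}\Gamma_{1,2}$ (whose cardinalities are bounded by Lemma~\ref{jb}(vi)) to obtain $\Gamma_{1,2}^{3}$ and $\Gamma_{1,2}^{4}$, apply Lemma~\ref{jb3} to read off $\wz{\partial}(\Gamma)$ and hence $|V\Gamma|$, and finish either by a numerical contradiction inside Lemma~\ref{jb}(i) or (iv), or by appealing to the classification of small association schemes of the relevant order, exactly as in Lemmas~\ref{g=3-1} and \ref{g=3-4}.

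The case $k=5$ is the one I expect to be the main obstacle. One again distinguishes whether $\Gamma_{3,3}\in\Gamma_{1,2}\Gamma_{2,1}$; in each subcase \eqref{4.1} together with Lemma~\ref{jb}(ii) determines $k_{2,3}$, $k_{3,3}$ and the intersection numbers involved, and one then runs the same routine (bound $|\Gamma_{1,2}\Gamma_{2,3}|$, compute $\Gamma_{1,2}^{3}$ and $\Gamma_{1,2}^{4}$, apply Lemma~\ref{jb3}, count $|V\Gamma|$). The difficulty is that here the arithmetic alone need not collapse every surviving parameter set: one is forced into circuit-chasing, tracking the sets $P_{(1,2),(1,2)}$ around short circuits and repeatedly exploiting $\Gamma_{1,2}\notin\Gamma_{1,2}^{2}$, in the style of the proofs of Lemmas~\ref{g=3-4} and \ref{g=3-5}. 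Note that consistency with Theorem~\ref{Main3} cannot be used as a shortcut, since the only $k=5$ digraph in the list, namely (i), already arose under the opposite hypothesis $\Gamma_{1,2}\in\Gamma_{1,2}^{2}$ in Lemma~\ref{g=3-1}.
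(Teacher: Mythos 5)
Your proposal is a plan rather than a proof, and the gap is substantive: neither case is actually closed. For $k=4$ you reduce to $k_{2,3}\in\{2,4\}$ and then say you ``would compute'' valencies and ``finish either by a numerical contradiction \dots or by appealing to the classification''; for $k=5$ you explicitly concede that ``the arithmetic alone need not collapse every surviving parameter set'' and that circuit-chasing of an unspecified kind would be needed. A referee cannot accept a disjunction of possible finishing moves in one case and an acknowledged open obstacle in the other. (Your arithmetic up to that point is essentially correct: $p_{(1,2),(1,2)}^{(2,3)}k_{2,3}=2k$, $p_{(2,1),(2,3)}^{(1,2)}=2$, the two candidates for $\Gamma_{1,2}\Gamma_{2,1}$, the parity argument forcing $k=5$ when $\Gamma_{3,3}\notin\Gamma_{1,2}\Gamma_{2,1}$, and $k_{2,3}=2k_{3,3}$ for $k=4$ all check out. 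Also, $k_{2,3}=2$ is already impossible since it gives $p_{(1,2),(1,2)}^{(2,3)}=4=k$ while $p_{(1,2),(2,1)}^{(2,3)}>0$, so only $k_{2,3}=4$ genuinely survives your reduction.)

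The idea you are missing is a single local observation that the paper exploits twice. First, a short computation with \eqref{4.1} and Lemma \ref{jb} (i) eliminates the subcase $p_{(1,2),(1,2)}^{(2,3)}=1$, leaving $p_{(1,2),(1,2)}^{(2,3)}=2$ and $k_{2,3}=k$. Now take $(x,z)\in\Gamma_{2,3}$, $w\in P_{(1,2),(2,1)}(x,z)$ and the two vertices $y,y'\in P_{(1,2),(1,2)}(x,z)$. Since $(y,z)\in\Gamma_{1,2}$ and $(z,w)\in\Gamma_{1,2}$, while $(w,y)$ lies in $\Gamma_{2,1}\Gamma_{1,2}$, the hypothesis $\Gamma_{1,2}^2\cap\Gamma_{1,2}\Gamma_{2,1}=\{\Gamma_{2,3}\}$ forces $(y,w),(y',w)\in\Gamma_{2,3}$, hence $y,y'\in P_{(1,2),(2,3)}(x,w)$ and $p_{(1,2),(2,3)}^{(1,2)}=p_{(1,2),(3,2)}^{(1,2)}\geq2$. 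Lemma \ref{jb} (iii) then gives $k\geq1+2+2=5$, which kills $k=4$ outright (no valency computation or appeal to classification tables is needed) and simultaneously pins down $\Gamma_{1,2}\Gamma_{2,1}=\{\Gamma_{0,0},\Gamma_{2,3},\Gamma_{3,2}\}$ with $p_{(1,2),(2,3)}^{(1,2)}=2$ and $p_{(1,2),(2,1)}^{(2,3)}=2$ for $k=5$. Running the same observation with a second vertex $w'\in P_{(1,2),(2,1)}(x,z)$ puts $y,y'$ in $\Gamma_{3,2}(w)\cap\Gamma_{3,2}(w')$; assuming $(y,y')\in\Gamma_{2,3}$ without loss of generality, the three vertices $y',w,w'$ all lie in $P_{(1,2),(3,2)}(x,y)$, contradicting $p_{(1,2),(3,2)}^{(1,2)}=2$. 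This is the ``circuit-chasing'' you anticipated, but it is three lines long; your proposed route through $|V\Gamma|$ and the classification of small schemes is not what the paper does and is not shown to terminate.
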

\begin{proof}
Assume the contrary, namely, $k\in\{4,5\}$. By Lemmas \ref{g=3-2}--\ref{g=3-5}, we have $\Gamma_{1,2}^2=\{\Gamma_{2,1},\Gamma_{2,3}\}$ and $\Gamma_{2,3},\Gamma_{3,2}\in\Gamma_{1,2}\Gamma_{2,1}$. Since $\sum_{i>1}p_{(2,i),(2,1)}^{(1,2)}=2$, one gets $p_{(2,1),(2,3)}^{(1,2)}=2$. By setting $\wz{d}=\wz{e}=(1,2)$ in Lemma \ref{jb} (i), one has $p_{(1,2),(1,2)}^{(2,3)}k_{2,3}=2k$.

Suppose $p_{(1,2),(1,2)}^{(2,3)}=1$. Since $k\in\{4,5\}$ and $k_{2,3}=2k$, by setting $\wz{d}=\wz{e}^*=(1,2)$ in Lemma \ref{jb} (i), we get $k=5$, $\Gamma_{1,2}\Gamma_{2,1}=\{\Gamma_{0,0},\Gamma_{2,3},\Gamma_{3,2}\}$ and $p_{(1,2),(2,1)}^{(2,3)}=p_{(1,2),(2,1)}^{(3,2)}=1$. It follows from Lemma \ref{jb} (ii) that $p_{(1,2),(2,1)}^{(2,3)}k_{2,3}=p_{(1,2),(2,3)}^{(1,2)}k$, and so $p_{(1,2),(2,3)}^{(1,2)}=p_{(1,2),(3,2)}^{(1,2)}=2$, contrary to \eqref{4.1}. Hence,  $p_{(1,2),(1,2)}^{(2,3)}>1$.

Since $p_{(1,2),(2,1)}^{(2,3)}\neq0$, from Lemma \ref{jb} (iii), we have $p_{(1,2),(1,2)}^{(2,3)}<k$. The fact $k\in\{4,5\}$ and $p_{(1,2),(1,2)}^{(2,3)}k_{2,3}=2k$ imply that $p_{(1,2),(1,2)}^{(2,3)}=2$ and $k_{2,3}=k$.

Let $(x,z)\in\Gamma_{2,3}$ and $w\in P_{(1,2),(2,1)}(x,z)$. Since $p_{(1,2),(1,2)}^{(2,3)}=2$, there exist distinct vertices $y,y'\in P_{(1,2),(1,2)}(x,z)$. The fact $\Gamma_{1,2}^2\cap\Gamma_{1,2}\Gamma_{2,1}=\{\Gamma_{2,3}\}$ implies $y,y'\in\Gamma_{3,2}(w)$. It follows that $p_{(1,2),(2,3)}^{(1,2)}=p_{(1,2),(3,2)}^{(1,2)}\geq2$. By Lemma \ref{jb} (iii), we have $k\geq p_{(1,2),(2,3)}^{(1,2)}+p_{(1,2),(3,2)}^{(1,2)}+p_{(1,2),(0,0)}^{(1,2)}$, which implies $k=5$, $p_{(1,2),(2,3)}^{(1,2)}=p_{(1,2),(3,2)}^{(1,2)}=2$ and $\Gamma_{1,2}\Gamma_{2,1}=\{\Gamma_{0,0},\Gamma_{2,3},\Gamma_{3,2}\}$. In view of Lemma \ref{jb} (ii), one gets $p_{(1,2),(2,1)}^{(2,3)}k_{2,3}=p_{(1,2),(2,3)}^{(1,2)}k$, and so $p_{(1,2),(2,1)}^{(2,3)}=p_{(1,2),(2,1)}^{(3,2)}=2$.

Pick a vertex $w'\in P_{(1,2),(2,1)}(x,z)$ with $w\neq w'$. Since $\Gamma_{1,2}^2\cap\Gamma_{1,2}\Gamma_{2,1}=\{\Gamma_{2,3}\}$, one has $y,y'\in\Gamma_{3,2}(w')$. Since $\Gamma_{1,2}\Gamma_{2,1}=\{\Gamma_{0,0},\Gamma_{2,3},\Gamma_{3,2}\}$, we may assume $(y,y')\in\Gamma_{2,3}$. Note that $y',w,w'\in P_{(1,2),(3,2)}(x,y)$, contrary to the fact that $p_{(1,2),(3,2)}^{(1,2)}=2$.
\end{proof}

\begin{lemma}\label{g=3-7}
If $\Gamma_{1,2}\notin\Gamma_{1,2}^2$, then $\Gamma$ is isomorphic to the digraph in Theorem {\rm\ref{Main3} (iii)}.
\end{lemma}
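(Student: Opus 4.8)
The plan is to resume from the situation reached in Lemmas \ref{g=3-2}--\ref{g=3-6}: under the standing hypothesis $\Gamma_{1,2}\notin\Gamma_{1,2}^2$ one already knows $k=6$, $\Gamma_{1,2}^2=\{\Gamma_{2,1},\Gamma_{2,3}\}$ with $p_{(2,3),(2,1)}^{(1,2)}=2$, $\{\Gamma_{2,3},\Gamma_{3,2}\}\subseteq\Gamma_{1,2}\Gamma_{2,1}$, and (from the proof of Lemma \ref{g=3-6}) $p_{(1,2),(1,2)}^{(2,3)}>1$. The first and main task is to determine all remaining intersection numbers. Setting $\wz{d}=\wz{e}=(1,2)$ in Lemma \ref{jb} (i) with $k=6$ gives $p_{(1,2),(1,2)}^{(2,3)}k_{2,3}=12$; since $p_{(1,2),(2,1)}^{(2,3)}\geq2$ by Lemma \ref{jb-1.1} (applicable because $2p_{(1,2),(1,2)}^{(2,1)}=8>6=k_{1,2}$), Lemma \ref{jb} (iii) forces $p_{(1,2),(1,2)}^{(2,3)}\leq4$, so $p_{(1,2),(1,2)}^{(2,3)}\in\{2,3,4\}$ with $k_{2,3}=k_{3,2}\in\{6,4,3\}$. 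I would rule out the values $3$ and $4$ by the kind of local path/circuit analysis used in Lemmas \ref{g=3-5} and \ref{g=3-6}: fixing $(x,z)\in\Gamma_{2,3}$ and tracking the sets $P_{(1,2),(1,2)}(x,z)$ and $P_{(1,2),(2,1)}(x,z)$ against the constraints $p_{(2,3),(2,1)}^{(1,2)}=2$, $p_{(2,1),(2,1)}^{(1,2)}=4$, $p_{(1,2),(2,1)}^{\wz{h}}\geq2$ for $\Gamma_{\wz{h}}\in\Gamma_{1,2}\Gamma_{2,1}$, and Lemmas \ref{jb2} and \ref{(1,2)^2}, should leave only $p_{(1,2),(1,2)}^{(2,3)}=2$ and $k_{2,3}=k_{3,2}=6$. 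With that fixed, Lemma \ref{jb} (vi) gives $|\Gamma_{1,2}\Gamma_{2,1}|\leq6$, and combining $\sum_{\wz{h}\neq(0,0)}p_{(1,2),(2,1)}^{\wz{h}}k_{\wz{h}}=30$ (Lemma \ref{jb} (i)), the lower bounds $p_{(1,2),(2,1)}^{\wz{h}}\geq2$, the symmetry $p_{(1,2),(2,1)}^{(2,3)}=p_{(1,2),(2,1)}^{(3,2)}$ (Lemma \ref{jb} (ii) and commutativity), and \eqref{4.1}, I expect to be forced into $\Gamma_{1,2}\Gamma_{2,1}=\{\Gamma_{0,0},\Gamma_{2,3},\Gamma_{3,2},\Gamma_{3,3}\}$ with $p_{(1,2),(2,1)}^{(2,3)}=p_{(1,2),(2,1)}^{(3,2)}=2$, $p_{(1,2),(2,1)}^{(3,3)}=6$ and $k_{3,3}=1$.

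Once the parameters are fixed, the next step is to bound $\wz{\partial}(\Gamma)$. Every relation produced so far has first coordinate at most $3$, and $k_{3,3}=1$ makes $\Gamma_{3,3}$ a singleton relation; computing the products $\Gamma_{2,3}\Gamma_{1,2}$, $\Gamma_{3,2}\Gamma_{1,2}$ and $\Gamma_{3,3}\Gamma_{1,2}$ from the intersection numbers just obtained (as in the final part of Lemma \ref{g=3-4}) should show $\Gamma_{1,2}^{3}\subseteq\{\Gamma_{0,0},\Gamma_{1,2},\Gamma_{2,1},\Gamma_{2,3},\Gamma_{3,2},\Gamma_{3,3}\}$, and that this set is closed under multiplication by $\Gamma_{1,2}$, so in particular $\Gamma_{1,2}^{4}$ lies inside it. Applying Lemma \ref{jb3} with $i=4$ then gives $\wz{\partial}(\Gamma)=\{(0,0),(1,2),(2,1),(2,3),(3,2),(3,3)\}$, whence $|V\Gamma|=k_{0,0}+k_{1,2}+k_{2,1}+k_{2,3}+k_{3,2}+k_{3,3}=1+6+6+6+6+1=26$.

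Finally, $\Gamma$ yields a commutative association scheme on $26$ points with six classes and the intersection numbers just determined; by the classification of such schemes in \cite{AH26}, the attached scheme is unique up to isomorphism. It therefore suffices to exhibit one digraph with the required properties, and ${\rm Cay}(\mathbb{Z}_{26},\{1,3,9,14,16,22\})$ does this: a direct check shows it is strongly connected and not undirected, has girth $3$ and valency $6$ with every arc of type $(1,2)$, and satisfies $p_{(1,2),(1,2)}^{(2,1)}=4=k-2$, so it is a commutative weakly distance-regular digraph realizing precisely this scheme. Hence $\Gamma$ is isomorphic to the digraph in Theorem \ref{Main3} (iii). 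The main obstacle is the first paragraph: the exhaustive determination of the intersection numbers, in particular excluding $k_{2,3}\in\{3,4\}$ and pinning down $\Gamma_{1,2}\Gamma_{2,1}$ among the possibilities allowed by Lemma \ref{jb} (vi); once the parameters and $|V\Gamma|=26$ are in place, the appeal to \cite{AH26} and the verification for the Cayley digraph are routine.
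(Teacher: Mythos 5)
Your overall skeleton matches the paper's: determine the intersection numbers, show $\wz{\partial}(\Gamma)$ consists of the six relations $(0,0),(1,2),(2,1),(2,3),(3,2),(3,3)$ with $|V\Gamma|=26$ via Lemma \ref{jb3}, and invoke \cite{AH26}. But the proposal has a genuine gap exactly where you flag ``the main obstacle'': the determination of the parameters is not carried out, only promised (``I would rule out the values $3$ and $4$ by the kind of local path/circuit analysis\dots'', ``I expect to be forced into\dots''). These are the non-routine steps, and the paper does them by a different and cleaner route that avoids your case split on $k_{2,3}\in\{3,4,6\}$ altogether: it first observes that $\Gamma_{1,2}\Gamma_{2,1}\subseteq\{\Gamma_{0,0},\Gamma_{2,3},\Gamma_{3,2},\Gamma_{3,3}\}$ (using $\Gamma_{1,2}\notin\Gamma_{1,2}^2$, Lemma \ref{g=3-3} and the distance bounds), gets $p_{(2,3),(2,1)}^{(1,2)}=2$ and then $p_{(1,2),(2,1)}^{(2,3)}=2$ from Lemmas \ref{jb-1.1} and \ref{(1,2)^2}~(ii); the containment $P_{(1,2),(2,1)}(x_0,x_2)\subseteq P_{(2,3),(2,1)}(x_1,x_2)$ then forces $p_{(1,2),(2,3)}^{(1,2)}=p_{(1,2),(3,2)}^{(1,2)}=2$ and $p_{(1,2),(3,3)}^{(1,2)}=1$ by the valency sum, after which $k_{2,3}=6$, $p_{(1,2),(1,2)}^{(2,3)}=2$, $p_{(1,2),(2,1)}^{(3,3)}=6$ and $k_{3,3}=1$ all fall out of Lemma \ref{jb}~(i),(ii) and \eqref{4.1} with no further casework.

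Two specific defects beyond the missing computations. First, you import ``$p_{(1,2),(1,2)}^{(2,3)}>1$ from the proof of Lemma \ref{g=3-6}''; that fact is derived there under the contrary hypothesis $k\in\{4,5\}$, so it is not available once $k=6$ (you would also need it, or something like it, to exclude $p_{(1,2),(1,2)}^{(2,3)}=1$, $k_{2,3}=12$, which your listed triple of cases silently omits). Second, the application of Lemma \ref{jb3} requires showing that $\Gamma_{1,2}^3$ and $\Gamma_{1,2}^4$ stay inside the six named relations; in the paper this takes a further vertex-level argument (locating the out-neighbours $y_0,\ldots,y_3,z_0,z_1$ of $x_2$ and the in-neighbours of $x_2$ relative to $x_0$ to compute $\Gamma_{2,3}\Gamma_{1,2}$ and $\Gamma_{3,2}\Gamma_{1,2}$), not a routine read-off from the intersection numbers already in hand. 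As written, the proposal is a plausible plan rather than a proof.
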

\begin{proof}
By Lemma \ref{g=3-6}, one obtains $k=6$. In view of Lemma \ref{g=3-5}, one gets $\Gamma_{1,2}^2=\{\Gamma_{2,1},\Gamma_{2,3}\}$. Since $p_{(1,2),(2,3)}^{(1,2)}=p_{(1,2),(3,2)}^{(1,2)}$, from Lemma \ref{g=3-2} and Lemma \ref{jb} (iii), we obtain $\Gamma_{1,2}\Gamma_{2,1}=\{\Gamma_{0,0},\Gamma_{2,3},\Gamma_{3,2},\Gamma_{3,3}\}$. Since $\sum_{i>1}p_{(2,i),(2,1)}^{(1,2)}=2$, one has $p_{(2,3),(2,1)}^{(1,2)}=2$. By Lemma \ref{jb-1.1} and Lemma \ref{(1,2)^2} (ii), we get $p_{(1,2),(2,1)}^{(2,3)}=2$.

Let $(x_0,x_1,x_2)$ be a path such that $(x_0,x_2)\in\Gamma_{2,3}$. Since $P_{(1,2),(2,1)}(x_0,x_2)\subseteq P_{(2,3),(2,1)}(x_1,x_2)$ from Lemma \ref{(1,2)^2} (ii), we have $p_{(1,2),(3,2)}^{(1,2)}\geq2$. By setting $\wz{d}=\wz{f}=(1,2)$ in Lemma \ref{jb} (iii), one gets $p_{(1,2),(2,3)}^{(1,2)}=p_{(1,2),(3,2)}^{(1,2)}=2$ and $p_{(1,2),(3,3)}^{(1,2)}=1$.
In view of Lemma \ref{jb} (ii), we have $p_{(1,2),(2,1)}^{(2,3)}k_{2,3}=p_{(1,2),(2,3)}^{(1,2)}k$, and so $k_{2,3}=6$. By setting $\wz{d}=\wz{e}=(1,2)$ in Lemma \ref{jb} (i), we have $p_{(1,2),(1,2)}^{(2,3)}=2$. It follows from \eqref{4.1} that $p_{(1,2),(2,1)}^{(3,3)}=6$, and so $\Gamma_{3,3}\Gamma_{1,2}=\{\Gamma_{2,1}\}$. By setting $\wz{d}=\wz{e}^*=(1,2)$ in Lemma \ref{jb} (i), one gets $k_{3,3}=1$.

Since $p_{(1,2),(1,2)}^{(2,3)}=p_{(1,2),(2,1)}^{(2,3)}=2$, there exist three distinct vertices $z_0,z_1\in P_{(1,2),(2,1)}(x_0,x_2)$ and $x_1'\in P_{(1,2),(1,2)}(x_0,x_2)$ with $x_1\neq x_1'$.

Since $p_{(2,1),(2,1)}^{(1,2)}=4$ and $\Gamma_{1,2}\notin\Gamma_{1,2}^2$, there exist distinct vertices $y_0,y_1,y_2,y_3\in P_{(2,1),(2,1)}(x_1,x_2)$ such that $\Gamma_{1,2}(x_2)=\{y_0,y_1,y_2,y_3,z_0,z_1\}$. If $(x_0,y_i)\in\Gamma_{3,3}$ for some $i\in\{0,1,2,3\}$, from $p_{(1,2),(2,1)}^{(3,3)}=6$, then $z_1\in P_{(1,2),(2,1)}(x_0,y_i)$, contrary to the fact that $\Gamma_{1,2}\notin\Gamma_{1,2}^2$. Since $p_{(1,2),(2,3)}^{(1,2)}=p_{(1,2),(3,2)}^{(1,2)}=2$, we may assume $y_0,y_1\in\Gamma_{2,3}(x_0)$ and $y_2,y_3\in\Gamma_{3,2}(x_0)$. It follows that $\Gamma_{2,3}\Gamma_{1,2}=\{\Gamma_{1,2},\Gamma_{2,3},\Gamma_{3,2}\}$ and $\Gamma_{1,2}^3=\{\Gamma_{0,0},\Gamma_{1,2},\Gamma_{2,3},\Gamma_{3,2},\Gamma_{3,3}\}$.

Since $y_0,y_1\in P_{(2,3),(2,1)}(x_0,x_2)$, we have $p_{(2,3),(2,1)}^{(2,3)}\geq2$. By setting $\wz{d}=\wz{f}=(2,3)$ and $\wz{e}=(2,1)$ in Lemma \ref{jb} (ii), one gets $p_{(1,2),(2,3)}^{(2,3)}=p_{(2,3),(2,1)}^{(2,3)}\geq2$. It follows that there exist $z_2,z_3\in P_{(1,2),(2,3)}(x_0,x_2)$ such that $\Gamma_{1,2}(x_0)=\{x_1,x_1',z_0,z_1,z_2,z_3\}$. Then $\Gamma_{3,2}\Gamma_{1,2}=\{\Gamma_{1,2},\Gamma_{2,1},\Gamma_{3,2}\}$ and $\Gamma_{1,2}^4=\{\Gamma_{1,2},\Gamma_{2,1},\Gamma_{2,3},\Gamma_{3,2}\}$. By Lemma \ref{jb3}, we obtain $\wz{\partial}(\Gamma)=\{(0,0),(1,2),(2,1),(2,3),(3,2),(3,3)\}$ with $|V\Gamma|=26$. In view of \cite{AH26}, $\Gamma$ is isomorphic to the digraph in Theorem \ref{Main3} (iii).
\end{proof}

Combining Step \ref{digraph} in Section 3 and Lemmas \ref{g=3-1}, \ref{g=3-7}, we complete the proof of Theorem \ref{Main3}.

\section{Proof of Lemma \ref{g=3}}

Before giving a proof of Lemma \ref{g=3}, we need some more basic terminologies and notations. Let $\Gamma$ be a weakly distance-regular digraphs. For a nonempty subset $E$ of $R$, we say $E$ {\em closed} if $\Gamma_{\wz{i}^{*}}\Gamma_{\wz{j}}\subseteq E$ for any $\Gamma_{\wz{i}}$ and $\Gamma_{\wz{j}}$ in $E$, and denote $\langle E\rangle$ the minimum closed subset containing $E$. For any nonempty closed subset $F$ of $R$, the \emph{quotient digraph} of $\Gamma$ over $F$, denoted by $\Gamma/F$, is defined as the digraph with vertex set $V\Gamma/F$ in which $(F(x),F(y))$ is an arc whenever there is an arc in $\Gamma$ from $F(x)$ to $F(y)$, where $F(x)=\{y\in V\Gamma\mid(x,y)\in \cup_{f\in F}f\}$ and $V\Gamma/F=\{F(x)\mid x\in V\Gamma\}$.

In this section, we prove Lemma \ref{g=3} by contradiction. Suppose $g>3$. By Lemma \ref{jb-1.2}, we have $k=4$. We give a proof of Lemma \ref{g=3} step by step.

\begin{step}\label{step1}
For some $\Gamma_{\wz{h}}\in\Gamma_{1,g-1}^2$, $p_{(1,g-1),(1,g-1)}^{\wz{h}}\neq2$.
\end{step}
Suppose, to the contrary that $p_{(1,g-1),(1,g-1)}^{\wz{h}}=2$ for all $\Gamma_{\wz{h}}\in\Gamma_{1,g-1}^2$. By setting $\wz{d}=\wz{e}=\wz{h}=\wz{g}^*=(1,g-1)$ in Lemma \ref{jb} (iv), we have
$2\sum_{\wz{f}}p_{(g-1,1),\wz{f}}^{(1,g-1)}=4+\sum_{\wz{l}\neq(0,0)}p_{(g-1,1),(1,g-1)}^{\wz{l}}p_{\wz{l},(1,g-1)}^{(1,g-1)}.$ In view of Lemma \ref{jb} (iii), one gets $\sum_{\wz{f}}p_{(g-1,1),\wz{f}}^{(1,g-1)}=\sum_{\wz{l}}p_{\wz{l},(1,g-1)}^{(1,g-1)}=4$, which implies $\sum_{\wz{l}\neq(0,0)}p_{(g-1,1),(1,g-1)}^{\wz{l}}p_{\wz{l},(1,g-1)}^{(1,g-1)}=4$. It follows that $p_{(1,g-1),(g-1,1)}^{(l,l)}=2$, $p_{(l,l),(1,g-1)}^{(1,g-1)}=1$ and $p_{(1,g-1),(g-1,1)}^{\wz{j}}=1$ for all $\Gamma_{\wz{j}}\in\Gamma_{1,g-1}\Gamma_{g-1,1}\setminus\{\Gamma_{0,0},\Gamma_{l,l}\}$. Denote $F=\langle\Gamma_{l,l}\rangle$. Note that $\{\Gamma_{0,0},\Gamma_{l,l}\}\subseteq F\cap\Gamma_{1,g-1}\Gamma_{g-1,1}$.

\textbf{Case 1.} $\{\Gamma_{0,0},\Gamma_{l,l}\}\subsetneq F\cap\Gamma_{1,g-1}\Gamma_{g-1,1}$.

Let $s$ be the minimum positive integer such that $\Gamma_{\wz{j}}\in\Gamma_{l,l}^{s}$ for some $\Gamma_{\wz{j}}\in\Gamma_{1,g-1}\Gamma_{g-1,1}\setminus\{\Gamma_{0,0},\Gamma_{l,l}\}$. Pick vertices $x_0,x_1,\ldots,x_{s}$ such that $(x_0,x_s)\in\Gamma_{\wz{j}}$ and $(x_i,x_{i+1})\in\Gamma_{l,l}$ for $0\leq i\leq s-1$. Since $p_{(1,g-1),(g-1,1)}^{\wz{j}}=1$ and $p_{(1,g-1),(g-1,1)}^{(l,l)}=2$, there exist five vertices $w,y_0,y_1,y_{2},y_{3}$ such that $w\in P_{(g-1,1),(1,g-1)}(x_0,x_s)$, $P_{(1,g-1),(g-1,1)}(x_0,x_1)=\{y_0,y_1\}$ and $P_{(1,g-1),(g-1,1)}(x_{s-1},x_s)=\{y_{2},y_{3}\}$.

If $(x_s,y_0)\in\Gamma_{1,g-1}$, from $p_{(l,l),(1,g-1)}^{(1,g-1)}=1$, then $P_{(l,l),(1,g-1)}(x_1,y_0)=\{x_0\}$, which implies that $(x_1,x_s)\in\Gamma_{\wz{a}}$ for some $\Gamma_{\wz{a}}\in\Gamma_{1,g-1}\Gamma_{g-1,1}$ with $\wz{a}\notin\{(0,0),(l,l)\}$ since $y_0\in P_{(1,g-1),(g-1,1)}(x_1,x_s)$, contrary to the minimality of $s$. Similarly, $\Gamma_{1,g-1}\cap\{(x_0,y_{2}),(x_0,y_{3}),(x_s,y_{0}),(x_s,y_{1})\}=\emptyset$.

Since $p_{(1,g-1),(1,g-1)}^{\wz{h}}=2$ for all $\Gamma_{\wz{h}}\in\Gamma_{1,g-1}^2$, there exist $z\in P_{(1,g-1),(1,g-1)}(w,y_0)$ and $z'\in P_{(1,g-1),(1,g-1)}(w,y_1)$ with $z,z'\notin\{x_0,x_{s-1},x_s\}$. If $z=x_1$ or $z'=x_1$, from $p_{(l,l),(1,g-1)}^{(1,g-1)}=1$, then $P_{(1,g-1),(l,l)}(w,x_1)=\{x_0\}$, and so $(x_1,x_s)\in\Gamma_{\wz{a}}$ for some $\Gamma_{\wz{a}}\in\Gamma_{1,g-1}\Gamma_{g-1,1}$ with $\wz{a}\notin\{(0,0),(l,l)\}$ since $w\in P_{(g-1,1),(1,g-1)}(x_1,x_s)$, contrary to the minimality of $s$. Hence, $x_1\notin\{z,z'\}$. The fact $x_0,x_1\in P_{(g-1,1),(1,g-1)}(y_0,y_1)$ and $p_{(1,g-1),(g-1,1)}^{\wz{j}}\leq2$ for all $\wz{j}\neq(0,0)$ imply that $z\neq z'$, and so $\Gamma_{1,g-1}(w)=\{z,z',x_0,x_s\}$. Note that $x_{s-1},x_s\in P_{(g-1,1),(1,g-1)}(y_2,y_3)$. Since $(x_0,y_{2}),(x_0,y_{3})\notin\Gamma_{1,g-1}$ and $p_{(1,g-1),(1,g-1)}^{\wz{h}}=2$ for all $\Gamma_{\wz{h}}\in\Gamma_{1,g-1}^2$, we may assume $(z,y_{2}),(z',y_{3})\in\Gamma_{1,g-1}$. Since $p_{(1,g-1),(l,l)}^{(1,g-1)}=1$, we obtain $z$ or $z'\in P_{(1,g-1),(l,l)}(w,x_s)$. It follows that $\{z,x_{s-1}\}\subseteq P_{(l,l),(1,g-1)}(x_s,y_{2})$ or $\{z',x_{s-1}\}\subseteq P_{(l,l),(1,g-1)}(x_s,y_{3})$, a contradiction.

\textbf{Case 2.} $\{\Gamma_{0,0},\Gamma_{l,l}\}=F \cap\Gamma_{1,g-1}\Gamma_{g-1,1}$.

Since $k=4$ and $p_{(1,g-1),(l,l)}^{(1,g-1)}=1$, the valency of the quotient digraph $\Gamma/F$ is $2$. Let $g'$ be the girth of the quotient digraph $\Gamma/F$. Observe that $g'\leq g$.

\textbf{Case 2.1.} $g'<g$.

Let $s$ be the minimum integer such that $\Gamma_{\wz{\partial}(x_0,x_{g'})}\in\Gamma_{l,l}^s$, where $(x_0,x_1,x_2,\ldots,x_{g'})$ is a path with $F(x_0)=F(x_{g'})$. Since $g'<g$, we get $x_0\neq x_{g'}$. Pick vertices $z_1,z_2\ldots,z_{s-1}$ such that $(z_i,z_{i+1})\in\Gamma_{l,l}$ for $0\leq i\leq s-1$, where $z_0=x_0$ and $z_s=x_{g'}$. The fact $p_{(1,g-1),(g-1,1)}^{(l,l)}=2$ implies that there exist vertices $y_0,y_1,y_2,y_3$ such that $P_{(1,g-1),(g-1,1)}(z_0,z_1)=\{y_0,y_1\}$ and $P_{(1,g-1),(g-1,1)}(z_{s-1},z_s)=\{y_2,y_3\}$.

Since $\Gamma_{l,l}$ is symmetric in the association scheme $(V\Gamma,R)$, one has $g'>2$. If $x_1\in\{y_0,y_1\}$, then $(z_1,x_1,x_2,\ldots,x_{g'})$ is a path with $F(z_1)=F(x_{g'})$, contrary to the minimality of $s$. Hence, $x_1\notin\{y_0,y_1\}$. Since $p_{(1,g-1),(1,g-1)}^{\wz{h}}=2$ for all $\Gamma_{\wz{h}}\in\Gamma_{1,g-1}^2$, there exists $x_1'\in P_{(1,g-1),(1,g-1)}(x_0,x_2)$ with $\Gamma_{1,g-1}(x_0)=\{y_0,y_1,x_1,x_1'\}$. Note that $p_{(1,g-1),(g-1,1)}^{(l,l)}=2$ and $p_{(l,l),(1,g-1)}^{(1,g-1)}=1$. The fact $p_{(1,g-1),(g-1,1)}^{\wz{j}}=1$ for all $\Gamma_{\wz{j}}\in\Gamma_{1,g-1}\Gamma_{g-1,1}\setminus\{\Gamma_{0,0},\Gamma_{l,l}\}$ implies $(x_1,x_1')\in\Gamma_{l,l}$.

Since $p_{(1,g-1),(1,g-1)}^{\wz{h}}=2$ for $\Gamma_{\wz{h}}\in\Gamma_{1,g-1}^2$, there exist $x_2'\in P_{(1,g-1),(1,g-1)}(x_1,x_3)$ with $x_2'\neq x_2$ and $x_2''\in P_{(1,g-1),(1,g-1)}(x_1',x_3)$ with $x_2''\neq x_2$. By $p_{(1,g-1),(g-1,1)}^{(l,l)}=2$ and $p_{(1,g-1),(l,l)}^{(1,g-1)}=p_{(1,g-1),(g-1,1)}^{\wz{j}}=1$ for all $\Gamma_{\wz{j}}\in\Gamma_{1,g-1}\Gamma_{g-1,1}\setminus\{\Gamma_{0,0},\Gamma_{l,l}\}$, if $x_2'=x_2''$, then $(x_2,x_2')\in\Gamma_{l,l}$; if $x_2'\neq x_2''$, then $\Gamma_{l,l}\cap\{(x_2,x_2'),(x_2,x_2''),(x_2',x_2'')\}\neq\emptyset$ since $x_2,x_2',x_2''\in\Gamma_{g-1,1}(x_3)$. Then there exist a path of length $2$ from $x_0$ to $w_2$ and a path of length $2$ from $x_0$ to $w_2'$ such that $w_2,w_2'\in\Gamma_{g-1,1}(x_3)$ and $(w_2,w_2')\in\Gamma_{l,l}$. By induction, there exist a path of length $i$ from $x_0$ to $w_{i}$ and a path of length $i$ from $x_0$ to $w_{i}'$ such that $w_{i},w_{i}'\in\Gamma_{g-1,1}(x_{i+1})$ and $(w_{i},w_{i}')\in\Gamma_{l,l}$ for $1\leq i\leq g'-1$.

Since $p_{(1,g-1),(1,g-1)}^{\wz{h}}=2$ for all $\Gamma_{\wz{h}}\in\Gamma_{1,g-1}^2$, from the minimality of $s$, there exist vertices $u_{i}\in P_{(1,g-1),(1,g-1)}(w_{g'-1},y_i)$ and $u_{i}'\in P_{(1,g-1),(1,g-1)}(w_{g'-1}',y_i)$ for $2\leq i\leq3$ with $z_{s-1},x_{g'}\notin\{u_2,u_3,u_2',u_3'\}$. The fact $p_{(l,l),(1,g-1)}^{(1,g-1)}=1$ and $P_{(l,l),(1,g-1)}(z_s,y_i)=\{z_{s-1}\}$ for $i\in\{2,3\}$ imply $u_2,u_3\notin\Gamma_{l,l}(z_s)$. Since $w_{g'-1}\in P_{(g-1,1),(1,g-1)}(z_s,u_2)\cap P_{(g-1,1),(1,g-1)}(z_s,u_3)$ and $p_{(g-1,1),(1,g-1)}^{\wz{j}}=1$ for all $\Gamma_{\wz{j}}\in\Gamma_{1,g-1}\Gamma_{g-1,1}\setminus\{\Gamma_{0,0},\Gamma_{l,l}\}$, we obtain $u_2\neq u_2'$ and $u_3\neq u_3'$. By $p_{(1,g-1),(g-1,1)}^{(l,l)}=2$ and $P_{(g-1,1),(1,g-1)}(y_2,y_3)=\{z_s,z_{s-1}\}$, $u_2,u_2',u_3,u_3'$ are pairwise distinct. Since $p_{(1,g-1),(l,l)}^{(1,g-1)}=1$, we have $u_{i}'\in P_{(l,l),(1,g-1)}(u_i,y_i)$ for $i\in\{2,3\}$. By $z_{s-1}\in P_{(l,l),(1,g-1)}(x_{g'},y_3)\cap P_{(l,l),(1,g-1)}(x_{g'},y_2)$, one gets $u_2,u_2',u_3,u_3'\notin\Gamma_{l,l}(x_{g'})$. It follows that $u_3\in P_{(1,g-1),(l,l)}(w_{g'-1},u_2)$ and $u_3'\in P_{(1,g-1),(l,l)}(w_{g'-1}',u_2')$.

By Lemma \ref{jb} (ii), we have $p_{(1,g-1),(l,l)}^{(1,g-1)}k=p_{(1,g-1),(g-1,1)}^{(l,l)}k_{l,l}$, and so $k_{l,l}=2$. Since $(u_2,u_2',u_3',u_3)$ is an undirected circuit of length $4$ in the graph $(V\Gamma,\Gamma_{l,l})$, from Lemma \ref{jb} (i) and (v), one gets $A_{l,l}^2=2A_{0,0}+2A_{a,a}$ and $F=\{\Gamma_{0,0},\Gamma_{l,l},\Gamma_{a,a}\}$ with $k_{a,a}=1$, where $\wz{\partial}(u_2,u_3')=(a,a)$. It follows from the minimality of $s$ that $s\in\{1,2\}$.

Since $p_{(1,g-1),(g-1,1)}^{(l,l)}=2$, there exists $z\in P_{(1,g-1),(g-1,1)}(w_{g'-1},w_{g'-1}')$ with $z\neq x_{g'}$. The fact $p_{(1,g-1),(g-1,1)}^{\wz{j}}=1$ for all $\Gamma_{\wz{j}}\in\Gamma_{1,g-1}\Gamma_{g-1,1}\setminus\{\Gamma_{0,0},\Gamma_{l,l}\}$ implies $(z,x_{g'})\in\Gamma_{l,l}$. If $s=2$, from $A_{l,l}^2=2A_{0,0}+2A_{a,a}$, then $(x_0,x_{g'})\in\Gamma_{a,a}$ and $(x_0,z)\in\Gamma_{l,l}$, which imply that there is a path of length $g'$ from $x_0$ to $z$ with $F(x_0)=F(z)$, contrary to the minimality of $s$. Thus, $s=1$ and $x_0=z_{s-1}$. Since $A_{l,l}^2=2A_{0,0}+2A_{a,a}$, we get $(x_0,z)\in\Gamma_{a,a}$. Note that $a=\partial(x_0,z)\leq g'$ and $l=\partial(x_0,x_{g'})\leq g'$. The fact that the girth of the quotient digraph of $\Gamma/F$ is $g'$ implies $l=a=g'$, contrary to the fact that $k_{l,l}=2$ and $k_{a,a}=1$.

\textbf{Case 2.2.} $g'=g$.

Since $p_{(1,g-1),(g-1,1)}^{\wz{j}}=1$ for all $\Gamma_{\wz{j}}\in\Gamma_{1,g-1}\Gamma_{g-1,1}\setminus\{\Gamma_{0,0},\Gamma_{l,l}\}$, from the proof of \cite[Proposition 4.3]{KSW03}, the quotient digraph $\Gamma/F$ is isomorphic to the digraph $\textrm{Cay}(\mathbb{Z}_g\times\mathbb{Z}_g,\{(1,0),(0,1)\})$. Let $\sigma$ be an isomorphism from the digraph $\textrm{Cay}(\mathbb{Z}_g\times\mathbb{Z}_g,\{(1,0),(0,1)\})$ to $\Gamma/F$. Pick $x_{i,j}\in \sigma(i,j)$ for all $(i,j)\in\mathbb{Z}_{g}\times\mathbb{Z}_g$. Without loss of generality, we may assume that $(x_{i,j},x_{i,j+1},\ldots,x_{i,j-1})$ and $(x_{i,j},x_{i+1,j},\ldots,x_{i-1,j})$ are two circuits for all $i,j\in\mathbb{Z}_g$. Note that $(x_{0,0},x_{2,2}),(x_{0,0},x_{3,1})\in\Gamma_{4,2g-4}$. But $x_{2,0}\in P_{(2,g-2),(2,g-2)}(x_{0,0},x_{2,2})$ and $P_{(2,g-2),(2,g-2)}(x_{0,0},x_{3,1})=\emptyset$, a contradiction.

\begin{step}\label{step2}
$p_{(2,g-2),(g-1,1)}^{(1,g-1)}=3$.
\end{step}
Let $(x_0,x_1,x_{2},\ldots,x_{g-1})$ be a circuit. Since $p_{(1,g-1),(1,g-1)}^{(2,g-2)}=2$, there exists $x_1'\in P_{(1,g-1),(1,g-1)}(x_0,x_2)$ with $x_1\neq x_1'$. Note that $x_1,x_1'\in P_{(g-1,1),(2,g-2)}(x_2,x_3)$, and so $p_{(2,g-2),(g-1,1)}^{(1,g-1)}\geq2$. Lemma \ref{jb} (iii) and Step \ref{step1} imply $p_{(2,g-2),(g-1,1)}^{(1,g-1)}=2$ or $3$.

Suppose, to the contrary that $p_{(2,g-2),(g-1,1)}^{(1,g-1)}=2$. Since $p_{(1,g-1),(1,g-1)}^{(2,g-2)}=2$, there exists $x_2'\in P_{(1,g-1),(1,g-1)}(x_1,x_3)$ with $x_2'\neq x_2$.  Since $(x_0,x_2')\in\Gamma_{2,g-2}$, there exists $x_1''\in P_{(1,g-1),(1,g-1)}(x_0,x_2')$ with $x_{1}''\neq x_1$, which implies $x_1,x_1',x_1''\in P_{(2,g-2),(g-1,1)}(x_{g-1},x_0)$. By $p_{(2,g-2),(g-1,1)}^{(1,g-1)}=2$, we get $x_1''=x_1'$. Since $x_2,x_2'\in P_{(1,g-1),(g-1,1)}(x_1,x_1')$, there exists $\Gamma_{\wz{h}}\in\Gamma_{1,g-1}\Gamma_{g-1,1}\setminus\{\Gamma_{0,0}\}$ with $p_{(1,g-1),(g-1,1)}^{\wz{h}}\geq2$.

By Lemma \ref{jb} (iii), one has $\sum_{\wz{f}\neq(2,g-2)}p_{(g-1,1),\wz{f}}^{(1,g-1)}=2$ and $\sum_{\wz{l}\neq(0,0)}p_{\wz{l},(1,g-1)}^{(1,g-1)}=3$. Setting $\wz{d}=\wz{e}=\wz{h}=\wz{g}^*=(1,g-1)$ in Lemma \ref{jb} (iv), we obtain
\begin{align}\label{g>4-1}
4+\sum_{\wz{f}\neq(2,g-2)}p_{(1,g-1),(1,g-1)}^{\wz{f}}p_{(g-1,1),\wz{f}}^{(1,g-1)}=4+\sum_{\wz{l}\neq(0,0)}p_{(g-1,1),(1,g-1)}^{\wz{l}}p_{\wz{l},(1,g-1)}^{(1,g-1)}\geq8.
\end{align}
Step \ref{step1} implies $p_{(1,g-1),(1,g-1)}^{\wz{i}}\geq3$ for some $\Gamma_{\wz{i}}\in\Gamma_{1,g-1}^2$. By Lemma \ref{jb} (ii), we get $p_{(1,g-1),(1,g-1)}^{\wz{i}}k_{\wz{i}}=4p_{\wz{i},(g-1,1)}^{(1,g-1)}$. Since $p_{(g-1,1),\wz{i}}^{(1,g-1)}\leq2$, we have $p_{(1,g-1),(1,g-1)}^{\wz{i}}=4$.

Similarly, there exists a vertex $x_0'\in P_{(1,g-1),(1,g-1)}(x_{g-1},x_1)$ with $x_0\neq x_0'$ and $(x_0',x_1')\in\Gamma_{1,g-1}$.
Since $P_{(2,g-2),(g-1,1)}(x_0,x_1)=P_{(2,g-2),(g-1,1)}(x_0',x_1)=\{x_2,x_2'\}$, there exist $x_2''\in P_{\wz{i},(g-1,1)}(x_0,x_1)$ and $x_2'''\in P_{\wz{i},(g-1,1)}(x_0',x_1)$ with $x_2'',x_2'''\notin\{x_2,x_2'\}$. If $x_2'''=x_2''$, then $x_0,x_0'\in P_{(g-1,1),\wz{i}}(x_1,x_2'')$, which implies $p_{(g-1,1),\wz{i}}^{(1,g-1)}=2$ since $p_{(g-1,1),\wz{i}}^{(1,g-1)}\leq2$. Without loss of generality, we may assume that $x_2''\neq x_2'''$. By $p_{(1,g-1),(1,g-1)}^{\wz{i}}=4$, we have $x_2'',x_2'''\in\Gamma_{1,g-1}(x_1')$.

Since $P_{(1,g-1),(g-1,1)}(x_1,x_1')=\{x_2,x_2',x_2'',x_2'''\}$, we get $p_{(1,g-1),(g-1,1)}^{\wz{j}}=4$ for some $\wz{j}\neq(0,0)$. Pick a vertex $x_3'\in P_{\wz{i},(g-1,1)}(x_1',x_2)$. Since $p_{(1,g-1),(1,g-1)}^{\wz{i}}=4$, one has $x_{2}',x_2'',x_2'''\in\Gamma_{g-1,1}(x_3')$, which implies $P_{(1,g-1),(1,g-1)}(x_1,x_3')=\{x_2,x_2',x_2'',x_2'''\}$. Since $p_{(1,g-1),(1,g-1)}^{(2,g-2)}=2$ and $\sum_{\wz{f}\neq(2,g-2)}p_{(g-1,1),\wz{f}}^{(1,g-1)}=2$, one gets $p_{(1,g-1),(1,g-1)}^{\wz{f}}=4$ for all $\Gamma_{\wz{f}}\in\Gamma_{1,g-1}^2\setminus\{\Gamma_{2,g-2}\}$.

By \eqref{g>4-1}, we have $\sum_{\wz{l}\neq(0,0)}p_{(g-1,1),(1,g-1)}^{\wz{l}}p_{\wz{l},(1,g-1)}^{(1,g-1)}=8$. Since $p_{(1,g-1),(g-1,1)}^{\wz{j}}=4$ and $\sum_{\wz{l}\neq(0,0)}p_{\wz{l},(1,g-1)}^{(1,g-1)}=3$, one gets $p_{(1,g-1),\wz{j}}^{(1,g-1)}=1$ and $p_{(1,g-1),(g-1,1)}^{\wz{h}}<4$ for $\wz{h}\notin\{(0,0),\wz{j}\}$. Lemma \ref{jb} (ii) implies $p_{(1,g-1),\wz{j}}^{(1,g-1)}k=p_{(1,g-1),(g-1,1)}^{\wz{j}}k_{\wz{j}}$, and so $k_{\wz{j}}=1$.

Let $F=\langle\Gamma_{\wz{j}}\rangle$. By $k=4$, the valency of the quotient digraph $\Gamma/F$ is $2$. Since $p_{(1,g-1),(g-1,1)}^{\wz{h}}<4$ for $\wz{h}\notin\{(0,0),\wz{j}\}$, from the proof of \cite[Proposition 4.3]{KSW03}, the quotient digraph $\Gamma/F$ is isomorphic to $\textrm{Cay}(\mathbb{Z}_g\times\mathbb{Z}_g,\{(1,0),(0,1)\})$. The fact that  $p_{(1,g-1),(g-1,1)}^{\wz{j}}=4$ and $k_{\wz{j}}=1$ imply that there exists an isomorphism $\sigma$ from $\textrm{Cay}(\mathbb{Z}_g\times\mathbb{Z}_g\times\mathbb{Z}_2,\{(1,0,0),(0,1,0),(1,0,1),(0,1,1)\})$ to $\Gamma$.
Observe that $(\sigma(0,0,0),\sigma(2,2,0)),(\sigma(0,0,0),\sigma(3,1,0))\in\Gamma_{4,2g-4}$. But one can verify $\sigma(2,0,0)\in P_{(2,g-2),(2,g-2)}(\sigma(0,0,0),\sigma(2,2,0))$ and $P_{(2,g-2),(2,g-2)}(\sigma(0,0,0),\sigma(3,1,0))=\emptyset$. This is a contradiction.

\begin{step}\label{step3}
If $p_{(1,g-1),(1,g-1)}^{(i,j)}=4$, then $p_{(1,g-1),(g-1,1)}^{\wz{h}}=2$ for $\Gamma_{\wz{h}}\in\Gamma_{1,g-1}\Gamma_{g-1,1}\setminus\{\Gamma_{0,0}\}$.
\end{step}

Suppose $p_{(1,g-1),(g-1,1)}^{\wz{h}}=4$ for some $\Gamma_{\wz{h}}\in\Gamma_{1,g-1}\Gamma_{g-1,1}\setminus\{\Gamma_{0,0}\}$. Let $(z,z')\in\Gamma_{\wz{h}}$ and $P_{(g-1,1),(1,g-1)}(z,z')=\{y,y',y'',y'''\}$. Pick a vertex $x\in P_{(g-1,1),(i,j)}(y,z)$. Since $p_{(1,g-1),(1,g-1)}^{(i,j)}=4$, we get $y',y'',y'''\in\Gamma_{1,g-1}(x)$. The fact $p_{(1,g-1),(1,g-1)}^{(2,g-2)}=2$ implies $z,z'\notin P_{(2,g-2),(g-1,1)}(x,y)$, and so $p_{(1,g-1),(g-2,2)}^{(1,g-1)}\leq2$, contrary to Step \ref{step2}. Hence, $p_{(1,g-1),(g-1,1)}^{\wz{h}}\leq3$ for all $\Gamma_{\wz{h}}\in\Gamma_{1,g-1}\Gamma_{g-1,1}\setminus\{\Gamma_{0,0}\}$.

Since $p_{(g-1,1),(2,g-2)}^{(1,g-1)}=3$ from Step \ref{step2}, by Lemma \ref{jb} (iii), we have $p_{(g-1,1),(i,j)}^{(1,g-1)}=1$. By setting $\wz{d}=\wz{e}=\wz{h}=\wz{g}^*=(1,g-1)$ in Lemma \ref{jb} (iv), we have
\begin{align}\label{eq-step3}
10=\sum_{\wz{f}}p_{(1,g-1),(1,g-1)}^{\wz{f}}p_{(g-1,1),\wz{f}}^{(1,g-1)}=4+\sum_{\wz{l}\neq(0,0)}p_{(g-1,1),(1,g-1)}^{\wz{l}}p_{\wz{l},(1,g-1)}^{(1,g-1)}.
\end{align}

If $p_{(1,g-1),(g-1,1)}^{\wz{h}}=3$ for some $\Gamma_{\wz{h}}\in\Gamma_{1,g-1}\Gamma_{g-1,1}$, from \eqref{eq-step3}, then $p_{\wz{h},(1,g-1)}^{(1,g-1)}=1$, which implies $p_{(1,g-1),(g-1,1)}^{\wz{h}}k_{\wz{h}}=p_{\wz{h},(1,g-1)}^{(1,g-1)}k=4$ by Lemma \ref{jb} (ii), a contradiction. Since $\sum_{\wz{l}\neq(0,0)}p_{\wz{l},(1,g-1)}^{(1,g-1)}=3$ from Lemma \ref{jb} (iii), the desired result follows by \eqref{eq-step3}.

\begin{step}\label{step4}
$g=4$ and $k_{2,2}=6$.
\end{step}

Since $p_{(2,g-2),(g-1,1)}^{(1,g-1)}=3$ from Step \ref{step2}, by Lemma \ref{jb} (ii), we get $p_{(2,g-2),(g-1,1)}^{(1,g-1)}k=p_{(1,g-1),(1,g-1)}^{(2,g-2)}k_{2,g-2}$, and so $k_{2,g-2}=6$. It suffices to show that $g=4$.

Since $p_{(2,g-2),(g-1,1)}^{(1,g-1)}=3$, from Lemma \ref{jb} (iii), we have $p_{(i,j),(g-1,1)}^{(1,g-1)}=1$ for some $(i,j)\neq(2,g-2)$. By setting $\wz{d}=\wz{e}=(1,g-1)$ in Lemma \ref{jb} (i), one gets $p_{(1,g-1),(1,g-1)}^{(i,j)}k_{i,j}=4$. Step \ref{step1} implies $p_{(1,g-1),(1,g-1)}^{(i,j)}=1$ or $4$.

Let $(x,z)\in\Gamma_{2,g-2}$ and $P_{(1,g-1),(1,g-1)}(x,z)=\{y,y'\}$. We claim $(y,z')\notin\Gamma_{1,g-1}$ for $z'\in P_{(2,g-2),(g-1,1)}(x,y')\setminus\{z\}$ and $(z'',y)\notin\Gamma_{1,g-1}$ for $z''\in P_{(g-1,1),(2,g-2)}(y',z)\setminus\{x\}$. Suppose $(y,z')\in\Gamma_{1,g-1}$ for some $z'\in P_{(2,g-2),(g-1,1)}(x,y')\setminus\{z\}$. Since $z,z'\in P_{(1,g-1),(g-1,1)}(y,y')$, we have $p_{(1,g-1),(g-1,1)}^{\wz{l}}\geq2$ for some $\wz{l}\neq(0,0)$. In view of Lemma \ref{jb} (iii), we have $\sum_{\wz{l}\neq(0,0)}p_{\wz{l},(1,g-1)}^{(1,g-1)}=3$. By setting $\wz{d}=\wz{e}=\wz{h}=\wz{g}^*=(1,g-1)$ in Lemma \ref{jb} (iv), one gets $6+p_{(1,g-1),(1,g-1)}^{(i,j)}=\sum_{\wz{l}}p_{(g-1,1),(1,g-1)}^{\wz{l}}p_{\wz{l},(1,g-1)}^{(1,g-1)}\geq8$, which implies $p_{(1,g-1),(1,g-1)}^{(i,j)}=4$. The fact $p_{(i,j),(g-1,1)}^{(1,g-1)}=1$ implies that there exists a vertex $z'''\in P_{(i,j),(g-1,1)}(x,y')$. Since $p_{(1,g-1),(1,g-1)}^{(i,j)}=4$, we obtain $y\in P_{(1,g-1),(1,g-1)}(x,z''')$, and so $z,z',z'''\in P_{(1,g-1),(g-1,1)}(y,y')$, contrary to Step \ref{step3}. Hence, $(y,z')\notin\Gamma_{1,g-1}$ for $z'\in P_{(2,g-2),(g-1,1)}(x,y')\setminus\{z\}$. Similarly, $(z'',y)\notin\Gamma_{1,g-1}$ for $z''\in P_{(g-1,1),(2,g-2)}(y',z)\setminus\{x\}$. Thus, our claim is valid.

Let $(x_0=x_g,x_1,\ldots,x_{g-1})$ be a circuit. Since $p_{(1,g-1),(1,g-1)}^{(2,g-2)}=2$, there exist vertices $x_2'\in P_{(1,g-1),(1,g-1)}(x_1,x_3)$ with $x_2'\neq x_2$, $x_1'\in P_{(1,g-1),(1,g-1)}(x_0,x_2')$ with $x_1'\neq x_1$ and $x_3'\in P_{(1,g-1),(1,g-1)}(x_2',x_4)$ with $x_3'\neq x_3$. By the claim, we have $(x_1',x_2)\notin\Gamma_{1,g-1}$. Since $p_{(1,g-1),(1,g-1)}^{(2,g-2)}=2$, there exists $x_2''\in P_{(1,g-1),(1,g-1)}(x_1',x_3)$ with $x_2''\notin\{x_2,x_2'\}$. By the claim, we get $(x_2'',x_3')\notin\Gamma_{1,g-1}$. Since $p_{(1,g-1),(1,g-1)}^{(2,g-2)}=2$, there exists $x_3''\in P_{(1,g-1),(1,g-1)}(x_2'',x_4)$ with $x_3''\notin\{x_3,x_3'\}$. The claim implies $x_3',x_3''\notin\Gamma_{1,g-1}(x_2)$. It follows that there exists $x_3'''\in P_{(1,g-1),(1,g-1)}(x_2,x_4)$ with $\Gamma_{g-1,1}(x_4)=\{x_3,x_3',x_3'',x_3'''\}$. If $g>4$, then $\Gamma_{g-1,1}(x_4)\subseteq P_{(g-1,1),(2,g-2)}(x_4,x_5)$, contrary to Step \ref{step2}. Hence, $g=4$.

\begin{step}\label{step5}
$\Gamma_{1,3}^2=\{\Gamma_{2,2},\Gamma_{i,j}\}$, $p_{(i,j),(3,1)}^{(1,3)}=p_{(1,3),(1,3)}^{(i,j)}=1$ and $k_{i,j}=4$ with $(i,j)\neq(2,2)$.
\end{step}

By Step \ref{step4}, we have $k_{2,2}=6$. In view of Lemma \ref{jb} (iii) and Step \ref{step2}, we have $\Gamma_{1,3}^2=\{\Gamma_{2,2},\Gamma_{i,j}\}$ and $p_{(i,j),(3,1)}^{(1,3)}=1$ for some $(i,j)\neq(2,2)$. By setting $\wz{d}=\wz{e}=(1,3)$ in Lemma \ref{jb} (i), one gets $p_{(1,3),(1,3)}^{(i,j)}k_{i,j}=4$. Step \ref{step1} implies $p_{(1,3),(1,3)}^{(i,j)}=1$ or $4$.

Assume the contrary, namely, $p_{(1,3),(1,3)}^{(i,j)}=4$. Note that $k_{i,j}=1$. Since $k=4$, from Lemma \ref{jb} (iii), one gets $i=2$. Let $\Gamma_{a,b}\in\Gamma_{1,3}\Gamma_{3,1}$ with $(a,b)\neq(0,0)$. Since $\Gamma_{1,3}\notin\Gamma_{1,3}^2$, from Lemma \ref{jb2}, we have $1<a,b\leq4$.

Suppose $a<4$ or $b<4$. Without loss of generality, we may assume $a<4$. Let $(x,w)\in\Gamma_{a,b}$. By Step \ref{step3}, there exist vertices $u,u'$ such that $P_{(3,1),(1,3)}(x,w)=\{u,u'\}$. Note that $p_{(1,3),(1,3)}^{(2,j)}=4$. By Lemma \ref{jb} (iii), one gets $\Gamma_{2,j}\notin\Gamma_{1,3}\Gamma_{3,1}$. Since $\Gamma_{1,3}^2=\{\Gamma_{2,2},\Gamma_{2,j}\}$, if $a=2$, then $(a,b)=(2,2)$ and there exist distinct vertices $y,y'\in P_{(1,3),(1,3)}(x,w)$ and $z\in P_{(1,3),(1,3)}(w,x)$, which imply $y,y',u,u'\in P_{(3,1),(2,2)}(w,z)$ since $x\in P_{(1,3),(3,1)}(z,u)\cap P_{(1,3),(3,1)}(z,u')$, contrary to Step \ref{step2}. Hence, $a=3$.

Let $(x,y,z,w)$ be a path. If $(y,w)\in\Gamma_{2,j}$, from $p_{(1,3),(1,3)}^{(2,j)}=4$, then $u\in P_{(1,3),(1,3)}(y,w)$, which implies that $(u,x,y)$ is a circuit, a contradiction. Since $\Gamma_{1,3}^2=\{\Gamma_{2,2},\Gamma_{2,j}\}$, from the commutativity of $\Gamma$, we have $(x,z),(y,w)\in\Gamma_{2,2}$. Similarly, $(u,y)\in\Gamma_{2,2}$. By $p_{(1,3),(1,3)}^{(2,2)}=2$, there exists $x'\in P_{(1,3),(1,3)}(u,y)\setminus\{x\}$ with $(x',z)\in\Gamma_{2,2}$. Since $P_{(1,3),(1,3)}(w,y)\subseteq P_{(3,1),(2,2)}(y,z)$ and $p_{(3,1),(2,2)}^{(1,3)}=3$, we get $x$ or $x'\in P_{(1,3),(1,3)}(w,y)$, contrary to the fact that $\Gamma_{1,3}\notin\Gamma_{1,3}^2$. Thus, $a=b=4$.

Note that $\Gamma_{1,3}\Gamma_{3,1}=\{\Gamma_{0,0},\Gamma_{4,4}\}$. Since $p_{(1,3),(3,1)}^{(4,4)}=2$ from Step \ref{step3}, by setting $\wz{d}=\wz{e}^*=(1,3)$ in Lemma \ref{jb} (i), we have $k_{4,4}=6$. Since $k_{2,j}=1$, from Lemma \ref{jb} (i) and (vi), one gets $\Gamma_{2,j}\Gamma_{1,3}=\{\Gamma_{i',j-1}\}$ with $k_{i',j-1}=4$. By Lemma \ref{jb} (ii), one gets $p_{(2,j),(1,3)}^{(1,3)}=p_{(1,3),(3,1)}^{(2,j)}=0$, which implies $i'=3$ from $k_{2,2}=6$. Let $(x,z)\in\Gamma_{2,2}$ and $\Gamma_{1,3}(z)=\{w,w',w'',w'''\}$. Since $p_{(1,2),(1,2)}^{(2,2)}=2$, we may assume $P_{(1,3),(1,3)}(x,z)=\{y,y'\}$ and $P_{(1,3),(1,3)}(z,x)=\{w'',w'''\}$. The fact $w'',w'''\in P_{(2,2),(3,1)}(y,z)\cap P_{(2,2),(3,1)}(y',z)$ and $p_{(2,j),(3,1)}^{(1,3)}=1$ imply $(y,w),(y',w')\in\Gamma_{2,j}$ or $(y,w'),(y',w)\in\Gamma_{2,j}$. Since $\Gamma_{2,j}\Gamma_{1,3}=\{\Gamma_{3,j-1}\}$, we have $w,w'\in\Gamma_{3,j-1}(x)$. Then $\Gamma_{2,2}\Gamma_{1,3}=\{\Gamma_{3,1},\Gamma_{3,j-1}\}$, and so $\Gamma_{1,3}^3=\{\Gamma_{3,1},\Gamma_{3,j-1}\}$.

Pick a path $(x_0,x_1,x_2,x_3,x_4)$ with $(x_0,x_2),(x_2,x_4)\in\Gamma_{2,j}$. Since $k_{2,j}=1$, from Lemma \ref{jb} (i), we have $k_{c,d}=1$ with $\wz{\partial}(x_0,x_4)=(c,d)$. The fact $\Gamma_{1,3}\Gamma_{3,1}=\{\Gamma_{0,0},\Gamma_{4,4}\}$ implies $x_1\notin P_{(1,3),(1,3)}(x_0,x_4)$. Since $p_{(1,3),(1,3)}^{(2,j)}=4$, one gets $(c,d)\neq(2,j)$. Note that $\Gamma_{1,3}^2=\{\Gamma_{2,2},\Gamma_{2,j}\}$ and $\Gamma_{1,3}^3=\{\Gamma_{3,1},\Gamma_{3,j-1}\}$. Since $k_{2,2}=6$ and $k_{1,3}=k_{3,j-1}=4$, one gets $c=4$.

Since $p_{(1,3),(1,3)}^{(2,j)}=4$, we may assume $(x_1,x_3)\in\Gamma_{2,2}$. The fact $p_{(1,3),(1,3)}^{(2,2)}=2$ implies that there exists $x_2'\in P_{(1,3),(1,3)}(x_1,x_3)$ with $x_2'\neq x_2$. Since $p_{(2,j),(3,1)}^{(1,3)}=1$, one has $(x_0,x_2'),(x_2',x_4)\in\Gamma_{2,2}$, which implies that there exist $x_1'\in P_{(1,3),(1,3)}(x_2',x_0)$ and $x_3'\in P_{(1,3),(1,3)}(x_4,x_2')$. Since $k_{4,4}=6$ and $k_{4,d}=1$, we get $d<4$. Note that $\Gamma_{1,3}^2=\{\Gamma_{2,2},\Gamma_{2,j}\}$ and $\Gamma_{1,3}^3=\{\Gamma_{3,1},\Gamma_{3,j-1}\}$. Since $k_{2,2}=6$ and $k_{1,3}=k_{3,j-1}=4$, one obtains $d=2$ and $j=c=4$. By $p_{(1,3),(1,3)}^{(2,j)}=4$, we have $x_1'\in P_{(1,3),(1,3)}(x_4,x_0)$, which implies $x_1'\in P_{(1,3),(3,1)}(x_2',x_4)$, contrary to the fact that $\Gamma_{2,2}\notin\Gamma_{1,3}\Gamma_{3,1}$.

This completes the proof of this step.

\begin{step}\label{step6}
For each $\Gamma_{\wz{l}}\in\Gamma_{1,3}\Gamma_{3,1}$ with $\wz{l}\neq(0,0)$, $p_{(1,3),(3,1)}^{\wz{l}}=1$.
\end{step}
In view of Step \ref{step5}, we have $p_{(1,3),(1,3)}^{(i,j)}=p_{(i,j),(3,1)}^{(1,3)}=1$.
By Step \ref{step2} and setting $\wz{d}=\wz{e}=\wz{h}=\wz{g}^*=(1,3)$ in Lemma \ref{jb} (iv), we have $7=\sum_{\wz{f}}p_{(1,3),(1,3)}^{\wz{f}}p_{(3,1),\wz{f}}^{(1,3)}=4+\sum_{\wz{l}\neq(0,0)}p_{(3,1),(1,3)}^{\wz{l}}p_{\wz{l},(1,3)}^{(1,3)}$. In view of Lemma \ref{jb} (iii), we get $\sum_{\wz{l}\neq(0,0)}p_{\wz{l},(1,3)}^{(1,3)}=3$, and so $p_{(1,3),(3,1)}^{\wz{l}}=1$ for all $\Gamma_{\wz{l}}\in\Gamma_{1,3}\Gamma_{3,1}$ with $\wz{l}\neq(0,0)$.

\begin{step}\label{step7}
$i=2$.
\end{step}

Suppose $i=1$. Note that $j=3$ and $\Gamma_{1,3}\in\Gamma_{1,3}^2$. Let $(x,z)\in\Gamma_{1,3}$ and $y\in P_{(1,3),(1,3)}(x,z)$. Pick a vertex $w\in P_{(1,3),(3,1)}(x,z)$. Since $p_{(1,3),(3,1)}^{(1,3)}=1$ from Step \ref{step6}, we have $(y,w)\in\Gamma_{2,2}$ by Step \ref{step5}. The fact $x\in P_{(3,1),(1,3)}(y,w)$ implies $\Gamma_{2,2}\in\Gamma_{1,3}\Gamma_{3,1}$. Note that $p_{(1,3),(3,1)}^{(2,2)}k_{2,2}=6$, contrary to Lemma \ref{jb} (v). Thus, $i=2$.

\begin{step}\label{jb g=4}
Suppose $(x,z)\in\Gamma_{2,2}$ and $P_{(1,3),(1,3)}(x,z)=\{y,y'\}$. Then $P_{(2,2),(3,1)}(y,z)\cap P_{(2,2),(3,1)}(y',z)\subseteq\Gamma_{3,1}(x)$ and $P_{(3,1),(2,2)}(x,y)\cap P_{(3,1),(2,2)}(x,y')\subseteq\Gamma_{1,3}(z)$.
\end{step}

Let $w\in P_{(2,2),(3,1)}(y,z)\cap P_{(2,2),(3,1)}(y',z)$. Since $p_{(2,j),(3,1)}^{(1,3)}=1$ from Step \ref{step5}, there exist distinct vertices $w'\in P_{(2,j),(3,1)}(y,z)$ and $w''\in P_{(2,j),(3,1)}(y',z)$. By $j>2$, we have $w',w''\notin P_{(1,3),(1,3)}(z,x)$. Since $p_{(1,3),(1,3)}^{(2,2)}=2$, one gets $w\in P_{(1,3),(1,3)}(z,x)$. Hence, $P_{(2,2),(3,1)}(y,z)\cap P_{(2,2),(3,1)}(y',z)\subseteq\Gamma_{3,1}(x)$. Similarly, $P_{(3,1),(2,2)}(x,y)\cap P_{(3,1),(2,2)}(x,y')\subseteq\Gamma_{1,3}(z)$.

\begin{step}\label{chain-1}
Let $(x_0,x_1,x_2,x_3)$ be a path such that $(x_0,x_2),(x_1,x_3)\in\Gamma_{2,j}$.  Suppose $\wz{\partial}(x_0,x_3)=\wz{a}$. Then $\Gamma_{\wz{a}}\notin\Gamma_{2,2}\Gamma_{1,3}$, $k_{\wz{a}}\in\{1,2,4\}$ and
\begin{align}
\Gamma_{\wz{a}}=\{(y_0,y_3)\mid~there~is~a~path~(y_0,y_1,y_2,y_3)~with~(y_0,y_2),(y_1,y_3)\in\Gamma_{2,j}\}.\nonumber
\end{align}
\end{step}

Suppose $\Gamma_{\wz{a}}\in\Gamma_{2,2}\Gamma_{1,3}$. Note that there exists $x_{2}'\in P_{(2,2),(1,3)}(x_0,x_3)$ with $x_2\neq x_{2}'$. By Step \ref{step5}, we have $p_{(1,3),(1,3)}^{(2,j)}=p_{(2,j),(3,1)}^{(1,3)}=1$. Since $p_{(1,3),(1,3)}^{(2,2)}=2$, there exists $x_1',x_1''\in P_{(1,3),(1,3)}(x_0,x_2')$ such that $x_1,x_1',x_1''$ are pairwise distinct. Since $(x_0,x_3)\notin\Gamma_{3,1}$ and $p_{(2,j),(3,1)}^{(1,3)}=1$, from Step \ref{jb g=4}, we may assume $(x_1',x_3)\in\Gamma_{2,2}$ and $(x_1'',x_3)\in\Gamma_{2,j}$. It follows that $p_{(1,2),(2,2)}^{\wz{a}}\geq1$ and $p_{(1,2),(2,j)}^{\wz{a}}\geq2$. The fact $p_{(1,3),(1,3)}^{(2,2)}=2$ and $p_{(1,3),(1,3)}^{(2,j)}=1$ imply that there exists $x_2''\in P_{(1,3),(1,3)}(x_1',x_3)$ with $x_2''\notin\{x_2,x_2'\}$. Since $(x_0,x_3)\notin\Gamma_{3,1}$, from Step \ref{jb g=4}, one gets $(x_0,x_2'')\in\Gamma_{2,j}$.

If $p_{(1,2),(2,2)}^{\wz{a}}=2$, then there exists $x_{2}'''\in P_{(2,2),(1,2)}(x_0,x_3)$ with $x_2'''\notin\{x_2,x_2',x_2''\}$, which implies $x_1$ or $x_1''\in P_{(1,3),(1,3)}(x_0,x_2''')$ since $p_{(1,3),(1,3)}^{(2,2)}=2$, contrary to the fact that $p_{(1,3),(1,3)}^{(2,j)}=1$. Lemma \ref{jb} (iii) implies $p_{(1,2),(2,2)}^{\wz{a}}=1$. By Lemma \ref{jb} (ii) and Step \ref{step4}, we have $k_{\wz{a}}=p_{(1,2),(2,2)}^{\wz{a}}k_{\wz{a}}=k_{2,2}p_{\wz{a},(3,1)}^{(2,2)}\geq6$. Since $p_{(2,j),(3,1)}^{(1,3)}=1$, there exists $(y_0,y_3)\in\Gamma_{\wz{a}}$ such that there is no path $(y_0,y_1,y_2,y_3)$ with $(y_0,y_2),(y_1,y_3)\in\Gamma_{2,j}$.

Since $p_{(1,2),(2,2)}^{\wz{a}}=1$ and $p_{(1,2),(1,2)}^{(2,2)}=2$, we may assume $y_2\in P_{(2,2),(1,3)}(y_0,y_3)$ and $P_{(1,3),(1,3)}(y_0,y_2)=\{y_1,y_1'\}$. The fact $p_{(1,2),(2,j)}^{\wz{a}}\geq2$ and $p_{(2,j),(3,1)}^{(1,3)}=1$ imply that there exist $y_1''\in P_{(1,3),(2,j)}(y_0,y_3)$ with $y_1''\notin\{y_1,y_1'\}$ and $y_2''\in P_{(1,3),(1,3)}(y_1'',y_3)$ with $y_2\neq y_2''$. Note that $(y_0,y_2'')\in\Gamma_{2,2}$. This implies $y_2,y_2''\in P_{(2,2),(1,3)}(y_0,y_3)$, contrary to the fact that $p_{(1,2),(2,2)}^{\wz{a}}=1$. Thus, $\Gamma_{\wz{a}}\notin\Gamma_{2,2}\Gamma_{1,3}$.


For any $(z_0,z_3)\in\Gamma_{\wz{a}}$, there exists a path $(z_0,z_1,z_2,z_3)$ with $(z_1,z_3)\in\Gamma_{2,j}$ by $x_1\in P_{(1,3),(2,j)}(x_0,x_3)$. Since $\Gamma_{\wz{a}}\notin\Gamma_{2,2}\Gamma_{1,3}$, we have $(z_0,z_2)\notin\Gamma_{2,2}$, and so $(z_0,z_2)\in\Gamma_{2,j}$ from Step \ref{step5}. Conversely, for any path $(z_0,z_1,z_2,z_3)$ with $(z_0,z_2),(z_1,z_3)\in\Gamma_{2,j}$, since $p_{(2,2),(3,1)}^{(1,3)}=3$ and $\Gamma_{\wz{a}}\notin\Gamma_{2,2}\Gamma_{1,3}$, we get $p_{\wz{a},(3,1)}^{(2,j)}=1$ and $(z_0,z_3)\in\Gamma_{\wz{a}}$, which imply $\Gamma_{\wz{a}}=\{(y_0,y_3)\mid {\rm there~is~a~path}~(y_0,y_1,y_2,y_3)~{\rm with}~(y_0,y_2),(y_1,y_3)\in\Gamma_{2,j}\}$.

Since $p_{\wz{a},(3,1)}^{(2,j)}=1$, from Lemma \ref{jb} (ii), we have $k_{\wz{a}}p_{(2,j),(1,3)}^{\wz{a}}=k_{2,j}p_{\wz{a},(3,1)}^{(2,j)}=4$. It follows that $k_{\wz{a}}\in\{1,2,4\}$.

This completes the proof of this step.

\begin{step}\label{chain-2}
Let $(x_0,x_1,x_2,y_3)$ be a path with $(x_0,x_2)\in\Gamma_{2,j}$ and $(x_1,y_3)\in\Gamma_{2,2}$. Suppose $\wz{\partial}(x_0,y_3)=\wz{b}$. Then $\Gamma_{2,j}\Gamma_{1,3}=\{\Gamma_{\wz{a}},\Gamma_{\wz{b}}\}$, $\Gamma_{2,2}\Gamma_{1,3}=\{\Gamma_{3,1},\Gamma_{\wz{b}}\}$, $p_{(2,2),(1,3)}^{\wz{b}}=p_{(2,j),(1,3)}^{\wz{b}}=1$ and $k_{\wz{b}}=12$.
\end{step}
Since $p_{(1,3),(1,3)}^{(2,2)}=2$, there exists $x_2'\in P_{(1,3),(1,3)}(x_1,y_3)$ with $x_2\neq x_2'$. By Step \ref{step5}, one has $p_{(2,j),(3,1)}^{(1,3)}=1$, which implies $(x_0,x_2')\in\Gamma_{2,2}$. Then there exists $x_1'\in P_{(1,3),(1,3)}(x_0,x_2')$ with $x_1\neq x_1'$. Since $(x_0,y_3)\notin\Gamma_{3,1}$, from Step \ref{jb g=4}, one has $(x_1',y_3)\in\Gamma_{2,j}$. Since $x_2'\in P_{(2,2),(1,3)}(x_0,y_3)$ and $x_2\in P_{(2,j),(1,3)}(x_0,y_3)$, we get $p_{(2,2),(1,3)}^{\wz{b}}\geq1$ and $p_{(2,j),(1,3)}^{\wz{b}}\geq1$.

\textbf{Case 1.} $p_{(2,2),(1,3)}^{\wz{b}}\geq2$ or $p_{(2,j),(1,3)}^{\wz{b}}\geq2$.

Suppose $p_{(2,j),(1,3)}^{\wz{b}}\geq2$. Then there exist $x_2''\in P_{(2,j),(1,3)}(x_0,y_3)$ with $x_2''\notin\{x_2,x_2'\}$ and $x_1''\in P_{(1,3),(1,3)}(x_0,x_2'')$ with $x_1''\notin\{x_1,x_1'\}$ since $p_{(1,3),(1,3)}^{(2,2)}=2$ and $p_{(1,3),(1,3)}^{(2,j)}=1$. By Step \ref{chain-1}, one has $(x_1'',y_3)\in\Gamma_{2,2}$, which implies that there exists $x_2'''\in P_{(1,3),(1,3)}(x_1'',y_3)$ with $x_2'''\neq x_2''$. Since $p_{(1,3),(1,3)}^{(2,j)}=1$ and $p_{(1,3),(1,3)}^{(2,2)}=2$, we get $x_2'''\notin\{x_2,x_2'\}$. The fact $p_{(2,j),(3,1)}^{(1,3)}=1$ implies $(x_0,x_2''')\in\Gamma_{2,2}$. Suppose $p_{(2,2),(1,3)}^{\wz{b}}\geq2$. Then there exist $x_1''\in P_{(1,3),(2,2)}(x_0,y_3)$ with $x_1''\notin\{x_1,x_1'\}$ and $x_2'',x_2'''\in P_{(1,3),(1,3)}(x_1'',y_3)$ with $\Gamma_{3,1}(y_3)=\{x_2,x_2',x_2'',x_2'''\}$ since $p_{(1,3),(1,3)}^{(2,2)}=2$ and $p_{(1,3),(1,3)}^{(2,j)}=1$. Since $(x_0,y_3)\notin\Gamma_{3,1}$, from Step \ref{jb g=4}, we may assume $(x_0,x_2'')\in\Gamma_{2,j}$ and $(x_0,x_2''')\in\Gamma_{2,2}$. For both cases, since $x_2',x_2'''\in P_{(2,2),(1,3)}(x_0,y_3)$ and $x_2,x_2''\in P_{(2,j),(1,3)}(x_0,y_3)$, from Lemma \ref{jb} (iii), one has $p_{(2,2),(1,3)}^{\wz{b}}=p_{(2,j),(1,3)}^{\wz{b}}=2$.

By Lemma \ref{jb} (ii) and Steps \ref{step4}, \ref{step5}, we get $2k_{\wz{b}}=k_{2,2}p_{\wz{b},(3,1)}^{(2,2)}=6p_{\wz{b},(3,1)}^{(2,2)}$ and $2k_{\wz{b}}=k_{2,j}p_{\wz{b},(3,1)}^{(2,j)}=4p_{\wz{b},(3,1)}^{(2,j)}$. Then $p_{\wz{b},(3,1)}^{(2,2)}=2$, $p_{\wz{b},(3,1)}^{(2,j)}=3$ and $k_{\wz{b}}=6$. Since $p_{(3,1),(3,1)}^{(2,2)}=2$, from Lemma \ref{jb} (iii) and (iv), one has $p_{(1,3),(2,2)}^{(3,1)}p_{(3,1),(3,1)}^{(2,2)}+p_{(1,3),(2,2)}^{\wz{b}}p_{(3,1),\wz{b}}^{(2,2)}=4+\sum_{\wz{l}\neq(0,0)}p_{(3,1),(1,3)}^{\wz{l}}p_{\wz{l},(2,2)}^{(2,2)}=10.$
Step \ref{step6} implies $\sum_{\wz{l}\neq(0,0)}p_{\wz{l},(2,2)}^{(2,2)}\geq6$, a contradiction.

\textbf{Case 2.} $p_{(2,2),(1,3)}^{\wz{b}}=p_{(2,j),(1,3)}^{\wz{b}}=1$.

By Lemma \ref{jb} (ii) and Steps \ref{step4}, \ref{step5}, we have $k_{\wz{b}}=k_{2,2}p_{\wz{b},(3,1)}^{(2,2)}=6p_{\wz{b},(3,1)}^{(2,2)}$ and $k_{\wz{b}}=k_{2,j}p_{\wz{b},(3,1)}^{(2,j)}=4p_{\wz{b},(3,1)}^{(2,j)}$, which imply that $p_{\wz{b},(3,1)}^{(2,j)}=3$. Then $p_{\wz{b},(3,1)}^{(2,2)}=2$ and $k_{\wz{b}}=12$. Since $p_{\wz{a},(3,1)}^{(2,j)}\geq1$ and $p_{(3,1),(3,1)}^{(2,2)}=2$, from Lemma \ref{jb} (iii), one gets $\Gamma_{2,j}\Gamma_{1,3}=\{\Gamma_{\wz{a}},\Gamma_{\wz{b}}\}$ and $\Gamma_{2,2}\Gamma_{1,3}=\{\Gamma_{3,1},\Gamma_{\wz{b}}\}$.

\begin{step}\label{chain-3}
Let $(z_0,z_1,z_2,z_3,z_4)$ be a path with $(z_0,z_2),(z_2,z_4)\in\Gamma_{2,j}$ and $(z_1,z_3)\in\Gamma_{2,2}$. Then $(z_0,z_4)\in\Gamma_{4,4}$ and $k_{4,4}\in\{3,6\}$.
\end{step}
Since $p_{(1,3),(1,3)}^{(2,2)}=2$, there exists $z_2'\in P_{(1,3),(1,3)}(z_1,z_3)$ with $z_2\neq z_2'$. By Step \ref{step5}, we have $p_{(2,j),(3,1)}^{(1,3)}=p_{(1,3),(1,3)}^{(2,j)}=1$ and $k_{2,j}=4$, which imply $z_0,z_4\in\Gamma_{2,2}(z_2')$.

We claim that there exists a path $(z_4,w_3,w_2,w_1,z_0)$ such that $(z_4,w_2),(w_2,z_0)\in\Gamma_{2,j}$ and $(w_3,w_1)\in\Gamma_{2,2}$ if $k_{\wz{c}}\in\{3,6\}$ with $\wz{\partial}(z_0,z_4)=\wz{c}$. Let $P_{(1,3),(1,3)}(z_4,z_2')=\{w_3,w_3'\}$ and $P_{(1,3),(1,3)}(z_2',z_0)=\{w_1,w_1'\}$. Since $k_{\wz{c}}\in\{3,6\}$ and $p_{(1,3),(3,1)}^{\wz{l}}\leq1$ for all $\wz{l}\neq(0,0)$ from Step \ref{step6}, by Lemma \ref{jb} (v), we have $\Gamma_{\wz{c}}\notin\Gamma_{1,3}\Gamma_{3,1}$, which implies $w_1,w_1'\notin\Gamma_{3,1}(z_4)$. Since $p_{(2,j),(3,1)}^{(1,3)}=1$, from Step \ref{jb g=4}, we may assume $(w_3',w_1),(w_3,w_1')\in\Gamma_{2,j}$ and $(w_3,w_1)\in\Gamma_{2,2}$. The fact $p_{(1,3),(1,3)}^{(2,2)}=2$ implies that there exists $w_2\in P_{(1,3),(1,3)}(w_3,w_1)$ with $w_2\neq z_2'$. Since $(w_3,z_0),(z_4,w_1)\notin\Gamma_{3,1}$, from Step \ref{jb g=4}, we obtain $w_2\in P_{(2,j),(2,j)}(z_4,z_0)$. Thus, our claim is valid.

By the claim, it suffices to show $\partial(z_0,z_4)=4$ and $k_{\wz{c}}\in\{3,6\}$. Since $p_{(1,3),(1,3)}^{(2,2)}=2$, there exist $z_1'\in P_{(1,3),(1,3)}(z_0,z_2')$ with $z_1'\neq z_1$ and $z_3'\in P_{(1,3),(1,3)}(z_2',z_4)$ with $z_3'\neq z_3$. Since $(z_0,z_2),(z_2,z_4)\in\Gamma_{2,j}$, one has $(z_0,z_3),(z_1,z_4)\notin\Gamma_{3,1}$, which implies $(z_1',z_3),(z_1,z_3')\in\Gamma_{2,j}$ from Step \ref{jb g=4}. By $p_{(2,j),(3,1)}^{(1,3)}=1$, one has $(z_1',z_3')\in\Gamma_{2,2}$. Then there exists $z_2''\in P_{(1,3),(1,3)}(z_1',z_3')$ with $z_2''\notin\{z_2,z_2'\}$. Since $(z_0,z_3'),(z_1',z_4)\notin\Gamma_{3,1}$, from Step \ref{jb g=4}, we get $(z_0,z_2''),(z_2'',z_4)\in\Gamma_{2,j}$. The fact $z_2,z_2''\in P_{(2,j),(2,j)}(z_0,z_4)$ and $z_2'\in P_{(2,2),(2,2)}(z_0,z_4)$ imply $p_{(2,j),(2,j)}^{\wz{c}}\geq2$ and $p_{(2,2),(2,2)}^{\wz{c}}\geq1$.

By Lemma \ref{jb} (ii) and Step \ref{step4}, we have
\begin{align}
k_{\wz{c}}p_{(2,j),(2,j)}^{\wz{c}}=k_{2,j}p_{\wz{c},(j,2)}^{(2,j)}=4p_{\wz{c},(j,2)}^{(2,j)},\label{eq-1}\\
k_{\wz{c}}p_{(2,2),(2,2)}^{\wz{c}}=k_{2,2}p_{\wz{c},(2,2)}^{(2,2)}=6p_{\wz{c},(2,2)}^{(2,2)}.\label{eq-2}
\end{align}

{\bf Case 1.} $p_{(2,2),(2,2)}^{\wz{c}}=1$.

By \eqref{eq-2}, we have $k_{\wz{c}}=6p_{\wz{c},(2,2)}^{(2,2)}$. In view of \eqref{eq-1}, one gets $p_{\wz{c},(j,2)}^{(2,j)}=3$, which implies $k_{\wz{c}}=6$ since $p_{(2,j),(2,j)}^{\wz{c}}\geq2$.

Assume the contrary, namely, $\partial(z_0,z_4)<4$. By Step \ref{step5}, we get
$\Gamma_{1,3}^2=\{\Gamma_{2,2},\Gamma_{2,j}\}$ with $k_{2,j}=4$.  Steps \ref{chain-1} and \ref{chain-2} imply $\Gamma_{1,3}^3=\{\Gamma_{3,1},\Gamma_{\wz{a}},\Gamma_{\wz{b}}\}$ with $k_{3,1}=4$, $k_{\wz{b}}=12$ and $k_{\wz{a}}\in\{1,2,4\}$. Hence, $(z_0,z_4)\in\Gamma_{2,2}$.

Since $p_{(1,3),(1,3)}^{(2,2)}=2$, we may assume that $P_{(1,3),(1,3)}(z_0,z_4)=\{z,z'\}$ and $w\in P_{(1,3),(1,3)}(z_4,z_0)$. By Steps \ref{chain-1} and \ref{chain-2}, one gets $z_1,z_1'\in\Gamma_{\wz{b}^{*}}(z_4)$ with $k_{\wz{b}}=12$. The fact $k=4$ implies that $z,z',z_1,z_1'$ are pairwise distinct. In view of Step \ref{step5}, we have $z,z'\in P_{(2,2),(3,1)}(w,z_0)$. Since $p_{(2,2),(3,1)}^{(1,3)}=3$ from Step \ref{step2}, we may assume $(w,z_1)\in\Gamma_{2,j}$ and $(w,z_1')\in\Gamma_{2,2}$. In view of Step \ref{chain-1}, one has $(w,z_2)\in\Gamma_{\wz{a}}$ with $k_{\wz{a}}\in\{1,2,4\}$. If $(z_3,w)\in\Gamma_{2,2}$, from Steps \ref{chain-1} and \ref{chain-2},  then $(z_2,w)\in\Gamma_{\wz{b}}$ with $k_{\wz{b}}=12$, a contradiction. By Step \ref{step5}, we obtain $(z_3,w)\in\Gamma_{2,j}$, which implies $(z_2,w)\in\Gamma_{\wz{a}}$ from Step \ref{chain-1}. It follows that $\wz{a}=\wz{a}^*$. Since $k_{2,2}=6$ from Step \ref{step4}, we have $\wz{a}=(3,3)$.

Since $p_{(2,j),(3,1)}^{(1,3)}=1$, we have $z_1',z_3'\in\Gamma_{2,2}(w)$. Since $(z_0,z_2''),(z_2'',z_4)\in\Gamma_{2,j}$, from Steps \ref{chain-1} and \ref{chain-2}, one gets $(w,z_2''),(z_2'',w)\in\Gamma_{\wz{b}}$ with $k_{\wz{b}}=12$. It follows that $\wz{b}=\wz{b}^*$. The fact $k_{2,2}=6$ implies $\wz{b}=(3,3)$, a contradiction.

{\bf Case 2.} $p_{(2,2),(2,2)}^{\wz{c}}\geq2$.

Note that there exists $z_2'''\in P_{(2,2),(2,2)}(z_0,z_4)$ with $z_2'''\notin\{z_2,z_2',z_2''\}$. Since $p_{(1,3),(1,3)}^{(2,2)}=2$, we may assume $P_{(1,3),(1,3)}(z_0,z_2''')=\{z_1'',z_1'''\}$ and $P_{(1,3),(1,3)}(z_2''',z_4)=\{z_3'',z_3'''\}$. Note that $z_2'\in P_{(1,3),(3,1)}(z_1,z_1')\cap P_{(3,1),(1,3)}(z_3,z_3')$. Since $p_{(1,3),(3,1)}^{\wz{l}}\leq1$ for $\wz{l}\neq(0,0)$, one has $\{z_1'',z_1'''\}\neq\{z_1,z_1'\}$ and $\{z_3'',z_3'''\}\neq\{z_3,z_3'\}$. Without loss of generality, we may assume $z_1''\notin\{z_1,z_1'\}$ and $z_3''\notin\{z_3,z_3'\}$.

If $z_1'''=z_1'$, from Steps \ref{chain-1} and \ref{chain-2}, then $(z_1',z_4)\in\Gamma_{\wz{b}}$, which implies $z_2',z_2'''\in P_{(1,3),(2,2)}(z_1',z_4)$, contrary to the fact that $p_{(2,2),(1,3)}^{\wz{b}}=1$. Similarly, we have $z_1'''\notin\{z_1,z_1'\}$ and $z_3'''\notin\{z_3,z_3'\}$.

Since $p_{(1,3),(3,1)}^{\wz{l}}\leq1$ for $\wz{l}\neq(0,0)$, we have $z_3''$ or $z_3'''\notin\Gamma_{3,1}(z_0)$. Without loss of generality, we may assume $(z_0,z_3''')\notin\Gamma_{3,1}$. By Step \ref{jb g=4}, we may assume $(z_1'',z_3''')\in\Gamma_{2,j}$. Since $p_{(2,j),(3,1)}^{(1,3)}=1$, we obtain $(z_1''',z_3''')\in\Gamma_{2,2}$. The fact $p_{(1,3),(1,3)}^{(2,2)}=2$ implies that there exists $z_2''''\in P_{(1,3),(1,3)}(z_1''',z_3''')$ with $z_2''''\neq z_2'''$. Since $p_{(1,3),(1,3)}^{(2,j)}=1$, one has $z_2''''\notin\{z_2,z_2',z_2'',z_2'''\}$. Note that $(z_1'',z_3''')\in\Gamma_{2,j}$ and $(z_0,z_3''')\notin\Gamma_{3,1}$. By Step \ref{jb g=4}, we get $(z_0,z_2'''')\in\Gamma_{2,j}$.

Suppose $(z_2'''',z_4)\in\Gamma_{2,2}$. The fact $z_2'''\in P_{(3,1),(1,3)}(z_3''',z_3'')$ and $p_{(1,3),(3,1)}^{\wz{l}}\leq1$ for $\wz{l}\neq(0,0)$ imply that $(z_2'''',z_3'')\notin\Gamma_{1,3}$. Since $p_{(1,3),(1,3)}^{(2,2)}=2$ and $\Gamma_{3,1}(z_4)=\{z_3,z_3',z_3'',z_3'''\}$, one gets $z_3$ or $z_3'\in\Gamma_{1,3}(z_2'''')$. By Steps \ref{chain-1} and \ref{chain-2}, we have $z_3,z_3'\in\Gamma_{\wz{b}}(z_0)$. It follows that $z_2'''',z_2\in P_{(2,j),(1,3)}(z_0,z_3)$ or $z_2'''',z_2''\in P_{(2,j),(1,3)}(z_0,z_3')$, contrary to the fact that $p_{(2,j),(1,3)}^{\wz{b}}=1$ from Step \ref{chain-2}. Then $(z_2'''',z_4)\in\Gamma_{2,j}$.

By Steps \ref{chain-1} and \ref{chain-2}, we have $z_3,z_3',z_3'',z_3'''\in P_{\wz{b},(1,3)}(z_0,z_4)$, which implies $p_{\wz{b},(1,3)}^{\wz{c}}=4$. In view of Lemma \ref{jb} (ii) and Step \ref{chain-2}, one obtains $4k_{\wz{c}}=p_{\wz{b},(1,3)}^{\wz{c}}k_{\wz{c}}=k_{\wz{b}}p_{\wz{c},(3,1)}^{\wz{b}}=12p_{\wz{c},(3,1)}^{\wz{b}}$, which implies $k_{\wz{c}}=3p_{\wz{c},(3,1)}^{\wz{b}}$. Since $z_2,z_2'',z_2''''\in P_{(2,j),(2,j)}(z_0,z_4)$, one gets $p_{(2,j),(2,j)}^{\wz{c}}\geq3$. Since $p_{\wz{c},(j,2)}^{(2,j)}\leq4$ from Lemma \ref{jb} (iii), by \eqref{eq-1}, one has $k_{\wz{c}}=3$.

Since $\Gamma_{1,3}^2=\{\Gamma_{2,2},\Gamma_{2,j}\}$ with $k_{2,j}=4$, from Steps \ref{chain-1} and \ref{chain-2}, we get $\Gamma_{1,3}^3=\{\Gamma_{3,1},\Gamma_{\wz{a}},\Gamma_{\wz{b}}\}$ with $k_{3,1}=4$, $k_{\wz{b}}=12$ and $k_{\wz{a}}\in\{1,2,4\}$. Then $\partial(z_0,z_4)=4$.

This completes the proof of this step.

\vspace{2.9ex}

In the following, we reach a contradiction based on the above discussion.

By Steps \ref{step4}, \ref{step5} and \ref{step7}, one gets $\Gamma_{1,3}^2=\{\Gamma_{2,2},\Gamma_{2,j}\}$ with $k_{2,2}=6$. In view of Steps \ref{chain-1} and \ref{chain-2}, we have $\Gamma_{1,3}^3=\{\Gamma_{3,1},\Gamma_{\wz{a}},\Gamma_{\wz{b}}\}$ with $k_{\wz{b}}=12$ and $k_{\wz{a}}\in\{1,2,4\}$. Step \ref{chain-3} implies $k_{4,4}\in\{3,6\}$. Since $p_{(1,3),(3,1)}^{\wz{l}}\leq1$ for $\wz{l}\neq(0,0)$ from Step \ref{step6}, we have $\Gamma_{2,2},\Gamma_{4,4}\notin\Gamma_{1,3}\Gamma_{3,1}$ by Lemma \ref{jb} (v). Lemma \ref{jb2} implies $\Gamma_{1,3}\Gamma_{3,1}\subseteq\{\Gamma_{0,0},\Gamma_{2,j},\Gamma_{j,2},\Gamma_{\wz{a}},\Gamma_{\wz{a}^*},\Gamma_{\wz{b}},\Gamma_{\wz{b}^*}\}$.

Suppose $\Gamma_{\wz{b}}\in\Gamma_{1,3}\Gamma_{3,1}$. Since $k_{\wz{b}}=12$, from Lemma \ref{jb} (i), one gets $\Gamma_{1,3}\Gamma_{3,1}=\{\Gamma_{0,0},\Gamma_{\wz{b}}\}$. It follows from Step \ref{chain-2} that there exists a path $(x_0,x_1,x_2,x_3,x_4)$ such that $(x_0,x_2)\in\Gamma_{2,j}$, $(x_1,x_3)\in\Gamma_{2,2}$ and $(x_0,x_4)\in\Gamma_{1,3}$. By Step \ref{chain-3}, we obtain $(x_2,x_4)\in\Gamma_{2,2}$. Since $p_{(1,3),(1,3)}^{(2,2)}=2$, there exists $x_3'\in P_{(1,3),(1,3)}(x_2,x_4)$ with $x_3\neq x_{3}'$. The fact $\Gamma_{1,3}\notin\Gamma_{1,3}\Gamma_{3,1}$ implies $(x_1,x_4)\notin\Gamma_{3,1}$. By Step \ref{jb g=4}, one has $(x_1,x_3')\in\Gamma_{2,j}$. In view of Steps \ref{chain-1} and \ref{chain-2}, we get $(x_0,x_3')\in\Gamma_{\wz{a}}$ with $\wz{a}\neq\wz{b}$. Since $x_4\in P_{(1,3),(3,1)}(x_0,x_3')$, one obtains $\Gamma_{\wz{a}}\in\Gamma_{1,3}\Gamma_{3,1}$, contrary to the fact that $\Gamma_{1,3}\Gamma_{3,1}=\{\Gamma_{0,0},\Gamma_{\wz{b}}\}$.

Suppose $\Gamma_{\wz{b}}\notin\Gamma_{1,3}\Gamma_{3,1}$. Note that $\Gamma_{1,3}\Gamma_{3,1}\subseteq\{\Gamma_{0,0},\Gamma_{2,j},\Gamma_{j,2},\Gamma_{\wz{a}},\Gamma_{\wz{a}^*}\}$. Step \ref{chain-1} implies $k_{\wz{a}}\in\{1,2,4\}$. Since $k_{2,j}=4$ and $j>2$, from Step \ref{step6} and Lemma \ref{jb} (i), (vi), we have $\Gamma_{1,3}\Gamma_{3,1}=\{\Gamma_{0,0},\Gamma_{2,j},\Gamma_{j,2},\Gamma_{\wz{a}}\}$ with $k_{\wz{a}}=4$. The fact $k_{2,2}=6$ implies $\wz{a}=(3,3)$. Let $(x_0,x_1,x_2,x_3)$ be a path such that $(x_0,x_2)\in\Gamma_{2,j}$ and $(x_0,x_3)\in\Gamma_{1,3}$. Since $x_0\in P_{(3,1),(1,3)}(x_1,x_3)$ and $\Gamma_{1,3}\Gamma_{3,1}=\{\Gamma_{0,0},\Gamma_{2,j},\Gamma_{j,2},\Gamma_{3,3}\}$, we have $(x_1,x_3)\in\Gamma_{2,j}$. Step \ref{chain-1} implies $(1,3)=\wz{\partial}(x_0,x_3)=\wz{a}=(3,3)$, a contradiction.

\section*{Acknowledgements}
Y.~Yang and Z.~Wang are supported by the Fundamental Research Funds for the Central Universities (Grant No.~2652019319).

\end{CJK*}

\end{document}